\newtheorem{corollary}{Corollary}
\newtheorem{definition}{Definition}
\newtheorem{lemma}{Lemma}
\newtheorem{proposition}{Proposition}
\newtheorem{remark}{Remark}
\newtheorem{theorem}{Theorem}
\numberwithin{equation}{section}
\newcommand{\Rc}{\mathring{Ric}}
\begin{document}
	
	\title[quasi-Einstein manifolds with boundary]{Rigidity of compact quasi-Einstein\\ manifolds with boundary}

	\author{Johnatan Costa,\,\,Ernani Ribeiro Jr\,\,and\,\,Detang Zhou}
	
\address[J. Costa]{Universidade Federal do Cear\'a - UFC, Departamento  de Matem\'atica, Campus do Pici, Av. Humberto Monte, Bloco 914, 60455-760, Fortaleza - CE, Brazil.}\email{johnatansc@alu.ufc.br}
	
	\address[E. Ribeiro Jr]{Universidade Federal do Cear\'a - UFC, Departamento  de Matem\'atica, Campus do Pici, Av. Humberto Monte, Bloco 914, 60455-760, Fortaleza - CE, Brazil}\email{ernani@mat.ufc.br}

	\address[D. Zhou]{Universidade Federal Fluminense - UFF, Instituto de Matem\'atica e Estat\'istica, 24020-140, Niter\'oi - RJ, Brazil}
\email{zhoud@id.uff.br}

\thanks{J. Costa was partially supported by CAPES/Brazil - Finance Code 001.}	
	\thanks{E. Ribeiro was partially supported by CNPq/Brazil [305128/2025-6 and 351492/2025-9] and FUNCAP/Brazil [ITR-0214-00116.01.00/23].}
	
	\thanks{D. Zhou was partially supported by FAPERJ/Brazil [E-26/200.386/2023] and CNPq/Brazil [308067/2023-1].}
	
	\thanks{Corresponding Author: E. Ribeiro Jr (ernani@mat.ufc.br)}

	\begin{abstract}
		In this article, we investigate the geometry of compact quasi-Einstein ma\-ni\-folds with boundary. We show that a $3$-dimensional simply connected compact quasi-Einstein manifold with boundary and cons\-tant scalar curvature is isometric, up to scaling, to either the standard hemisphere $\mathbb{S}^{3}_{+}$, or the cylinder $I\times\mathbb{S}^2$ with the product metric. For dimension $n=4,$ we prove that a $4$-di\-men\-sio\-nal simply connected compact quasi-Einstein manifold with boundary and constant scalar curvature is isometric, up to scaling, to either the standard hemisphere $\mathbb{S}^{4}_{+},$ or the cylinder $I\times\mathbb{S}^3$ with the product metric, or the product space $\mathbb{S}^{2}_{+}\times\mathbb{S}^2$ with the product metric. Other related results for arbitrary dimensions are also discussed.
		\end{abstract}
	
	\date{November 14, 2025}
	
	\keywords{quasi-Einstein manifolds; constant scalar curvature; compact
		manifolds with boundary; rigidity results}
	\subjclass[2020]{Primary 53C23, 53C24, 53C25; Secondary 58J90.}
	
	\maketitle
	
\section{Introduction}\label{SecInt}

	 A compact $n$-dimensional Riemannian ma\-ni\-fold $(M^n,\,g),$ $n\geq 2,$ possibly with boundary $\partial M,$ is called an $m$-{\it quasi-Einstein manifold}, or simply {\it quasi-Einstein manifold}, if there exists a smooth potential function $u$ on $M^n$ satisfying the system
\begin{equation}
\label{eq-qE}
\left\{%
\begin{array}{lll}
    \displaystyle \nabla^{2}u = \dfrac{u}{m}(Ric-\lambda g) & \hbox{in $M,$} \\
    \displaystyle u>0 & \hbox{on $int(M),$} \\
        \displaystyle u=0 & \hbox{on $\partial M,$} \\
    \end{array}%
\right.
\end{equation} for some constants $\lambda$ and $0<m<\infty$ (cf. \cite{CaseShuWei, He-Petersen-Wylie2012,Petersen-Chenxu}).  Here, $\nabla^{2} u$ stands for the Hessian of $u$ and $Ric$ is the Ricci tensor of $g.$ When $m=1$, we assume in addition that $\Delta u=-\lambda u$ in order to recover the {\it static equation}: $-(\Delta u)g+\nabla^2 u -uRic=0.$ Moreover, an $m$-quasi-Einstein manifold will be called {\it trivial} if $u$ is constant, otherwise it will be {\it nontrivial}. We notice that the triviality implies that $M^n$ is an Einstein manifold.

The study of quasi-Einstein manifolds is directly related to the existence of warped pro\-duct Einstein metrics on a given manifold. To be precise, as discussed by Besse \cite[p. 267]{Besse}, an $m$-quasi-Einstein manifold corresponds to a base of a warped product Einstein metric; for more details, see, e.g., \cite[Proposition 1.1]{He-Petersen-Wylie2012}, \cite[Corollary 9.107]{Besse} and \cite{Ernani2,Besse,CaseShuWei,catino,CMMM,MR,Rimoldi}. If $\partial M=\emptyset,$ we can make sense of $\infty$-quasi-Einstein manifolds by setting $u=e^{-\frac{f}{m}}$ in (\ref{eq-qE}) and taking the limit $m\to \infty.$ These are precisely gradient Ricci solitons; see \cite{Cao1,CaseShuWei,CRZJGEA,Hamilton}. Although quasi-Einstein manifolds and gradient Ricci solitons share structural similarities, there exist examples that exhibit fundamental differences, as discussed in, e.g., \cite[Remark 1.4]{He-Petersen-Wylie2012} and \cite{BRS14,Bohm,CaseShuWei}. Another interesting motivation to investigate quasi-Einstein manifolds comes from the study of diffusion operators by Bakry and \'Emery \cite{bakry}, which is linked to the theory of smooth metric measure spaces; see, e.g., \cite{BRR,CaseT,CaseP,MR,Rimoldi,LFWang,LFWang2,WW,WW2} and the references therein. $1$-quasi-Einstein manifolds are commonly known as static spaces. More precisely, static spaces can be viewed as the relativistic interpretation of $1$-quasi-Einstein manifolds that serve as bases of Einstein manifolds; see \cite[Remark 2.3]{CaseShuWei} and \cite{Ambrozio,BM,BMC,Kobayashi,lafontaine,QY1,QY}. Additionally, quasi-Einstein metrics have attracted interest in physics due to their relation with the geometry of a degenerate Killing horizon and horizon limit; see, e.g., \cite{BGKW,BGKW2,Wylie}. Explicit examples of nontrivial compact and noncompact $m$-quasi-Einstein manifolds can be found in, e.g., \cite{Bergery,Besse,Bohm,Bohm2,CaseShuWei,CaseT,CaseP,He-Petersen-Wylie2012,Lu,Ernani_Keti,Rimoldi,LFWang}. Moreover, the classification of $1$ and $2$-dimensional $m$-quasi-Einstein manifolds is presented in \cite[p. 267-272]{Besse} and \cite{He-Petersen-Wylie2012}.

In this article, we focus on nontrivial compact $m$-quasi-Einstein manifolds with non-empty boundary $\partial M.$ According to \cite[Theorem 4.1]{He-Petersen-Wylie2012}, such manifolds necessarily satisfy $\lambda>0.$ In order to set the stage for our main results, it is important to highlight some examples of compact $m$-quasi-Einstein manifolds with boundary and constant scalar curvature (cf. \cite{He-Petersen-Wylie2012,remarks}):
\begin{itemize}
\item[(i)]
The hemisphere $\Bbb{S}^n_+$ with the standard metric $g=dr^2+\sin^2r g_{\Bbb{S}^{n-1}}$ and  potential function $u(r)=\cos r,$ where $r$ is a height function with $r\leq\frac{\pi}{2};$ 
\item[(ii)] $\Big[0,\sqrt{m/\lambda}\,\pi\Big]\times\Bbb{S}^{n-1},$ for $\lambda>0,$ endowed with the metric $g=dt^2+\frac{n-2}{\lambda}g_{\Bbb{S}^{n-1}}$ and potential function $u(t,x)=\sin\left(\sqrt{\lambda/m}\,t\right);$ 

\item[(iii)]  $\Bbb{S}^{p+1}_+\times\Bbb{S}^q$, $q>1$, with the product metric $$g=dr^2+\sin^2r g_{\Bbb{S}^p}+\frac{q-1}{p+m}g_{\Bbb{S}^q},$$ where $r(x,y)=h(x)$ and $h$ is a height function on $\Bbb{S}^{p+1}_+,$ potential function  $u=\cos r$ with $r\leq\frac{\pi}{2}$ and $\lambda=p+m.$ 
\end{itemize}

In 2014, He, Petersen and Wylie \cite[Proposition 2.4]{Petersen-Chenxu} showed that a nontrivial compact quasi-Einstein manifold with boundary and constant Ricci curvature is isometric to Example ${\rm (i)}.$ It turns out that these three quoted examples have constant scalar curvature. Therefore, one question that naturally arises is to know {\it whether a nontrivial compact (simply connected) $m$-quasi-Einstein manifold with boundary and constant scalar curvature must be necessarily one of them}\footnote{For dimensions $n\geq 5,$ additional examples can be constructed by applying the product property; see, e.g., \cite[Lemma 2.2]{Petersen-Chenxu}.}. As we shall see later, in this article, we will solve this question for dimension $3$ and $4.$

It is known from \cite{Petersen-Chenxu} and \cite[p. 271]{Besse} that the hemisphere $\Bbb{S}^2_+$ is the only nontrivial $2$-dimensional simply connected compact $m$-quasi-Einstein manifold with boundary and constant scalar curvature. In \cite{Petersen-Chenxu}, He, Petersen and Wylie investigated $m$-quasi-Einstein manifolds with constant scalar curvature. In particular, for the specific dimension $n=3,$ they proved that an $m$-quasi-Einstein manifold with boundary and constant scalar curvature is rigid, i.e., it is Einstein or its universal cover is a product of Einstein manifolds (cf. \cite[Theorem 1.3]{Petersen-Chenxu}). Other related results for compact $m$-quasi-Einstein manifold with boundary and constant scalar curvature were discussed in \cite{compact,remarks,He-Petersen-Wylie2012}. Nevertheless, the explicit classification of compact $m$-quasi-Einstein manifolds with boundary and constant scalar curvature is still open. In another direction, Petersen and Wylie \cite{PW} studied rigid gradient Ricci solitons. It is known, by the works of Hamilton \cite{Hamilton}, Ivey \cite{Ivey}, Perelman \cite{Perelman}, Naber \cite{Naber}, Ni-Wallach \cite{NW}, and Cao-Chen-Zhu \cite{CCZ}, that $2$ and $3$-dimensional gradient shrinking Ricci solitons are rigid, and moreover, they are entirely classified. A more recent result due to Cheng and Zhou \cite{ChengZhou}, combined with Fern\'andez-Lop\'ez and Garc\'ia-R\'io \cite{Fl-Gr}, establishes the complete classification of $4$-dimensional gradient shrinking Ricci solitons with constant scalar curvature, which in turn provides a par\-tial solution for a pro\-blem raised by Huai-Dong Cao  (cf. \cite{ChengZhou}). This present work is also motivated by these results on gradient Ricci solitons.

In this article, inspired by the question mentioned earlier and by works due to Cheng and Zhou \cite{ChengZhou}, Fern\'andez-Lop\'ez and Garc\'ia-R\'io \cite{Fl-Gr} and He, Petersen and Wylie \cite{Petersen-Chenxu}, we will establish the complete classification of compact simply connected $3$ and $4$-dimensional $m$-quasi-Einstein manifolds with boundary and constant scalar curvature. To that end, in the same spirit of \cite{Fl-Gr}, we first determine the possible values for the constant scalar curvature of an $n$-dimensional compact $m$-quasi-Einstein manifold with boundary. More precisely, we have the following result.

\begin{theorem}
\label{theo1}
    Let $\big(M^n,\,g,\,u,\,\lambda\big)$ be a nontrivial compact $m$-quasi-Einstein manifold with boundary, $m>1$ and constant scalar curvature $R.$ Then we have:
\begin{eqnarray}
\label{estR}
    R\in\left\{\frac{k(m-n)+n(n-1)}{m+n-k-1}\lambda\,\,; \,k\in\{0,1,\ldots,n-1\}\right\}.
\end{eqnarray}
\end{theorem}

We note that the value of the scalar curvature  in \eqref{estR} can be interpreted in terms of the dimension $k$ of the set of critical points (or, equivalently, of the set of ma\-xi\-mum points); see the proof of Theorem \ref{theo1} in Section \ref{secProofs}. In Example ${\rm (i)},$ we have $R=\frac{n(n-1)\lambda}{m+n-1},$ and the only critical point is the north pole, i.e., $k=0.$ In Exam\-ple ${\rm (ii)},$ $R=(n-1)\lambda,$ and the set of critical points of the potential function $u(t,x)=\sin(\sqrt{\lambda/m}\,t)$ is precisely $\left\{\sqrt{\frac{m}{\lambda}}\frac{\pi}{2}\right\}\times \mathbb{S}^{n-1},$ which has dimension $n-1.$ Finally, in Example ${\rm (iii)},$ $R=\frac{q(m-n)+n(n-1)}{m+n-q-1}\lambda$ and the set of critical points of the potential function is $\{north\,\,pole\}\times \Bbb{S}^q.$

Before discussing our next result, we recall that if an $m$-quasi-Einstein manifold has constant scalar curvature $R$ and $m>1,$ then

	\begin{eqnarray}
\label{eqHj56p}
|\Rc|^2=-\frac{m+n-1}{n(m-1)}(R-n\lambda)\left(R-\frac{n(n-1)}{m+n-1}\lambda\right);
\end{eqnarray} for more details, see \cite[Proposition 3.3]{He-Petersen-Wylie2012} and \cite[Lemma 3.2]{CaseShuWei} (see also Lemma \ref{lemmafund}).

\begin{remark}
\label{remL}
Observe that in considering $R=\frac{n(n-1)}{m+n-1}\lambda$ into (\ref{eqHj56p}), i.e., the lower value of (\ref{estR}), one deduces that $M^n$ is necessarily Einstein. Therefore, it suffices to apply Proposition 2.4 of \cite{Petersen-Chenxu} to conclude that $M^n$ is isometric to the standard hemisphere $\Bbb{S}^n_+.$ Moreover, as we shall see in Proposition \ref{propK11} in Section \ref{secProofs}, there is no compact nontrivial quasi-Einstein manifold with boundary and constant scalar curvature $R=\frac{m+n(n-2)}{m+n-2}\lambda.$
\end{remark}

In the sequel, we shall consider the extremal value case of (\ref{estR}), namely, $R=(n-1)\lambda.$ In this situation, we have the following result which can be compared with \cite[Theorem 1.9]{Petersen-Chenxu}.

\begin{theorem}
\label{theo3}
Let $\big(M^n,\,g,\,u,\,\lambda\big),$ $n\geq 3,$ be a nontrivial simply connected compact $m$-quasi-Einstein manifold with boundary and $m>1.$ Then $M^n$ has constant scalar curvature $R=(n-1)\lambda$ if and only if it is isometric, up to scaling, to the cylinder $I \times N$ with product metric, where $N$ is a compact $\lambda$-Einstein manifold.
\end{theorem}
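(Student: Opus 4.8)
The plan is as follows. The reverse implication is a direct verification: on the product $\left[0,\frac{\sqrt m}{\sqrt\lambda}\pi\right]\times N$ with $g=dt^2+g_N$ and $N$ a $\lambda$-Einstein manifold, one has $Ric=0$ on $\partial_t$ and $Ric=\lambda g_N$ along $N$, so $u(t)=\sin\!\left(\sqrt{\lambda/m}\,t\right)$ solves \eqref{eq-qE} with $\lambda>0$ and $R=(n-1)\lambda$ (this is Example (ii) with a general fibre). Thus the content is the forward implication. First, tracing \eqref{eq-qE} and using $R=(n-1)\lambda$ gives
\[ \Delta u=\frac{u}{m}(R-n\lambda)=-\frac{\lambda}{m}u, \]
while substituting $R=(n-1)\lambda$ into \eqref{eqHj56p} yields $|\Rc|^2=\tfrac{n-1}{n}\lambda^2$, hence $|Ric|^2=(n-1)\lambda^2$, $|Ric-\lambda g|^2=\lambda^2$, and $|Ric|^2-\lambda R=0$.

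The crux is to show $Ric(\nabla u,\nabla u)\equiv 0$, which I would obtain by computing $\Delta W$ in two ways for
\[ W:=|\nabla u|^2+\frac{\lambda}{m}u^2 . \]
On one hand, \eqref{eq-qE} gives $\nabla W=\frac{2u}{m}Ric(\nabla u)$; since $R$ is constant, $\div Ric=\frac12\nabla R=0$, so $\div\big(Ric(\nabla u)\big)=\langle Ric,\nabla^2u\rangle=\frac{u}{m}(|Ric|^2-\lambda R)=0$, and therefore
\[ \Delta W=\frac{2}{m}\,Ric(\nabla u,\nabla u). \]
On the other hand, the Bochner formula together with $|\nabla^2u|^2=\frac{\lambda^2}{m^2}u^2$ and $\langle\nabla u,\nabla\Delta u\rangle=-\frac{\lambda}{m}|\nabla u|^2$ gives, after the terms in $u^2$ and in $|\nabla u|^2$ cancel,
\[ \Delta W=2\,Ric(\nabla u,\nabla u). \]
Comparing the two expressions and using $m>1$ forces $Ric(\nabla u,\nabla u)\equiv 0$; this is precisely where the hypothesis $m>1$ is essential.

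Consequently $\Delta W=0$, and since $\nabla W=\frac{2u}{m}Ric(\nabla u)$ vanishes on $\partial M$ (where $u=0$), the boundary term in $\int_M|\nabla W|^2=\int_{\partial M}W\,\partial_\nu W$ vanishes, so $W$ is constant. Hence $|\nabla u|^2=c-\frac{\lambda}{m}u^2$ for some $c>0$, and $Ric(\nabla u)=0$ on the interior. The trace and norm constraints then force, by the equality case of Cauchy--Schwarz applied on $(\nabla u)^\perp$, that $Ric$ has eigenvalue $0$ along $\nabla u$ and eigenvalue $\lambda$ on $(\nabla u)^\perp$; feeding this back into \eqref{eq-qE} gives $\nabla^2u=0$ on $(\nabla u)^\perp$, so every regular level set of $u$ is totally geodesic and $|\nabla u|$ is constant along it.

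With this structure I would build the splitting: setting $dt=du/|\nabla u|$ makes $\nabla u/|\nabla u|$ a unit geodesic field and yields $g=dt^2+g_N$ locally with $g_N$ independent of $t$, while solving $u'=\sqrt{c-\frac{\lambda}{m}u^2}$ gives $u=u_{\max}\sin\!\left(\sqrt{\lambda/m}\,t\right)$, so that $u$ runs from $0$ on one boundary component up to its maximum on the totally geodesic hypersurface $\{\nabla u=0\}$ and back to $0$, forcing the interval length $\frac{\sqrt m}{\sqrt\lambda}\pi$; the totally geodesic fibre $N$ is then $\lambda$-Einstein by the Gauss equation. Finally, since $M$ is simply connected, the normal exponential flow from one boundary component (or from the maximal hypersurface) is a global isometry onto $\left[0,\frac{\sqrt m}{\sqrt\lambda}\pi\right]\times N$, giving the claim up to scaling. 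I expect the main obstacle to be this last, global step: promoting the local product to a genuine global isometry requires controlling the critical locus $\{\nabla u=0\}$ and the boundary $\{u=0\}$, where the flow degenerates, and it is exactly here that simple connectivity enters, guaranteeing connectedness of the fibres and triviality of the resulting foliation.
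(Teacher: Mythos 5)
Your core computations are correct, and in the middle of the argument you take a genuinely different route from the paper. The paper obtains your two key facts instantly from Lemma \ref{lemmafund}: assertion (1) with $R=(n-1)\lambda$ constant reads $0=-(m-1)Ric(\nabla u)$, i.e.\ $Ric(\nabla u)=0$ in one line (this is \eqref{eq-ridu}), and assertion (2) is precisely the statement that your $W=|\nabla u|^2+\frac{\lambda}{m}u^2$ equals the constant $\mu/(m-1)$ (this is \eqref{transnormal}). Your double computation of $\Delta W$ --- once via $\div\bigl(u\,Ric(\nabla u)\bigr)$ using $\div Ric=\frac12\nabla R=0$ and $|Ric|^2=\lambda R$, once via Bochner --- is a correct, self-contained re-derivation of both, and it isolates where $m\neq 1$ is used. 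From there the two proofs reconverge: your equality case of Cauchy--Schwarz on $(\nabla u)^{\perp}$, giving Ricci eigenvalues $(0,\lambda,\ldots,\lambda)$, is exactly the paper's step via \eqref{eqHj56p}. You then diverge again at the splitting: the paper feeds the eigenstructure into Corollary \ref{coro-tensort} to get $T\equiv 0$, hence vanishing Cotton tensor and $W_{ijkl}\nabla_l u=0$ by Lemma \ref{l1}, invokes Theorem 1.2 of \cite{He-Petersen-Wylie2012} for a global warped product $g=dt^2+\varphi^2(t)\widetilde{g}_N$, and kills the warping with $Ric(\partial_t,\partial_t)=0$ plus Proposition \ref{inex}; you build the product by hand from the totally geodesic level sets. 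Your route buys elementarity --- no Cotton/Weyl/$T$-tensor machinery, no external splitting theorem, and no need for Proposition \ref{inex}, since no warping ever appears --- and it can be streamlined: with the eigenstructure one checks $\nabla_X\bigl(\nabla u/|\nabla u|\bigr)=0$ for every $X$ on the regular set, so the unit normal is parallel and the local splitting is immediate.

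The one place your sketch falls short of a proof is the global endgame, which you honestly flag. Two items need actual arguments. First, you assert that $\{\nabla u=0\}$ is a totally geodesic hypersurface; this is not automatic, and in the paper it is the content of the case $k=n-1$ in Theorem \ref{theo1}: substituting $R=(n-1)\lambda$ into $R=\frac{k(m-n)+n(n-1)}{m+n-k-1}\lambda$ forces $(n-1-k)(m-1)=0$, hence $k=n-1$ since $m>1$. Alternatively you can bypass this: the normal exponential map $\Phi(t,x)$ from a boundary component $\Sigma$ satisfies $\Phi^{*}g=dt^2+g_{\Sigma}$ on the regular set, hence everywhere by real analyticity of $g$ and $u$ in harmonic coordinates, and analytic continuation along the normal geodesics gives $u\circ\Phi=u_{\max}\sin\bigl(\sqrt{\lambda/m}\,t\bigr)$ on all of $\bigl[0,\sqrt{m/\lambda}\,\pi\bigr]$, across the critical slice. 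Second, the passage from local to global isometry: the image of $\Phi$ is open and closed, so $\Phi$ is a surjective local isometry, hence a Riemannian covering, and simple connectedness of $M$ together with connectedness of $\bigl[0,\sqrt{m/\lambda}\,\pi\bigr]\times\Sigma$ makes it injective --- the same covering argument the paper invokes via \cite{Munkres} at the end of the proof of Theorem \ref{theodim4}. With these two points supplied, your plan goes through and yields the stated classification.
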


As a consequence of Theorem \ref{theo1}, Proposition \ref{propK11}, Theorem \ref{theo3} and Proposition 2.4 in  \cite{Petersen-Chenxu}, we shall obtain a classification for compact $3$-dimensional $m$-quasi-Einstein manifolds with boundary and constant scalar curvature. To be precise, we have the following result.

\begin{corollary}\label{theo2}
Let $(M^3,\,g,\,u,\,\lambda)$ be a nontrivial simply connected compact $3$-dimensional $m$-quasi-Einstein manifold with boundary and $m>1.$ Then $M^3$ has constant scalar curvature if and only if it is isometric, up to scaling, to either
    \begin{itemize}
        \item[(i)] the standard hemisphere $\mathbb{S}^{3}_{+}$, or
        \item[(ii)] the cylinder $I\times\mathbb{S}^2$ with the product metric.
    \end{itemize}
\end{corollary}

\vspace{0.10cm}

From now on, we focus on dimension $n=4.$ It is well known that four-dimensional manifolds exhibit fascinating and distinctive geometric features. This is largely due to the fact that, on a four-dimensional oriented compact Riemannian manifold, the bundle of $2$-forms admits an invariant decomposition as a direct sum. We refer the reader to \cite{GS1999} for further details on this specific dimension. In this scenario, one deduces from Theorem \ref{theo1} and Proposition \ref{propK11} that the possible values for the constant scalar curvature $R$ are 
\begin{equation*}
\left\{\frac{12}{m+3}\lambda,\,2\frac{(m+2)}{(m+1)}\lambda,\,3\lambda\right\}.
\end{equation*} 
If $R=\frac{12}{m+3}\lambda,$ it then follows from Remark \ref{remL} that $M^4$ is isometric, up to scaling, to the standard hemisphere $\Bbb{S}^4_+.$ In the case $R=3\lambda,$ it suffices to apply Theorem \ref{theo3} to conclude that $M^4$ is isometric to the cylinder $I\times \Bbb{S}^3$ with product metric. This fact has left open the question of whether $\mathbb{S}^{2}_{+}\times\mathbb{S}^2$ is the unique $4$-dimensional compact quasi-Einstein manifold with boundary and constant scalar curvature $R=2\frac{(m+2)}{(m+1)}\lambda.$ We answer this question:

\begin{theorem}\label{theodim4}
    Let $(M^4,\,g,\,u,\,\lambda)$ be a nontrivial simply connected compact $4$-di\-men\-sional $m$-quasi-Einstein manifold with boundary and $m>1.$ Then $M^4$ has constant scalar curvature $R=2\frac{(m+2)}{(m+1)}\lambda$ if and only if it is isometric, up to scaling, to the product space $\mathbb{S}^{2}_{+}\times\mathbb{S}^2$ with the product metric. 
\end{theorem}

The proof of Theorem \ref{theodim4} is essentially inspired by the work of Cheng and Zhou \cite{ChengZhou}. As a consequence of Theorem \ref{theo1}, Remark \ref{remL}, Theorem \ref{theo3} and Theorem \ref{theodim4}, we get the following classification result. 

\begin{corollary}
\label{corA}
Let $(M^4,\,g,\,u,\,\lambda)$ be a nontrivial simply connected compact $4$-di\-men\-sio\-nal $m$-quasi-Einstein manifold with boundary and $m>1.$ Then $M^4$ has constant scalar curvature if and only if it is isometric, up to scaling, to either
\begin{itemize}
\item[(i)] the standard hemisphere $\mathbb{S}^{4}_{+},$ or
\item[(ii)] the cylinder $I\times\mathbb{S}^3$ with the product metric, or
\item[(iii)] the product space $\mathbb{S}^{2}_{+}\times\mathbb{S}^2$ with the product metric.
\end{itemize}
\end{corollary}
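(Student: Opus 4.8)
The plan is to read Corollary~\ref{corA} as a finite case analysis, since all of the genuine analytic content has already been isolated in Theorems~\ref{theo1}, \ref{theo3} and \ref{theodim4}. First I would specialize the general formula of Theorem~\ref{theo1}, namely $R=\frac{k(m-n)+n(n-1)}{m+n-k-1}\lambda$, to $n=4$. Letting $k$ run through $\{0,1,2,3\}$ produces exactly the four candidate values $\frac{12}{m+3}\lambda$ (for $k=0$), $\frac{m+8}{m+2}\lambda$ ($k=1$), $\frac{2(m+2)}{m+1}\lambda$ ($k=2$) and $3\lambda$ ($k=3$). Thus any nontrivial compact $m$-quasi-Einstein manifold $M^4$ with boundary, $m>1$, and constant scalar curvature must realize one of these four numbers; and since $m>1$, a direct check shows they are pairwise distinct, so the four cases are genuinely separate.

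The second step is to identify the manifold in each case. \textbf{(1)} If $R=\frac{12}{m+3}\lambda$, the lowest value of \eqref{estR}, then Remark~\ref{remL} applies: substituting into \eqref{eqHj56p} forces $|\Rc|^2=0$, so $M^4$ is Einstein, and Proposition~2.4 of \cite{Petersen-Chenxu} yields the hemisphere $\mathbb{S}^4_+$. \textbf{(2)} If $R=\frac{m+8}{m+2}\lambda$, then Proposition~\ref{propK11} excludes this value entirely, so this case is vacuous. \textbf{(3)} If $R=\frac{2(m+2)}{m+1}\lambda$, then Theorem~\ref{theodim4} directly gives the doubly warped product $\mathbb{S}^2_+\times\mathbb{S}^2$. \textbf{(4)} If $R=3\lambda=(n-1)\lambda$, the extremal value, then Theorem~\ref{theo3} gives $M^4$ isometric, up to scaling, to $\left[0,\frac{\sqrt{m}}{\sqrt{\lambda}}\pi\right]\times N$ with $N$ a compact $\lambda$-Einstein $3$-manifold. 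Here I would use simple-connectedness: since $M^4$ is a product with a contractible interval, $\pi_1(N)\cong\pi_1(M^4)=0$, and a compact simply connected $3$-dimensional $\lambda$-Einstein manifold with $\lambda>0$ has constant positive sectional curvature (in dimension three, Einstein is equivalent to constant curvature), hence is the round $\mathbb{S}^3$. This recovers the cylinder of Example~${\rm (ii)}$.

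For the converse implication I would simply invoke the explicit descriptions recorded in the introduction: the hemisphere $\mathbb{S}^4_+$, the cylinder $\left[0,\frac{\sqrt{m}}{\sqrt{\lambda}}\pi\right]\times\mathbb{S}^3$, and $\mathbb{S}^2_+\times\mathbb{S}^2$ (Example~${\rm (iii)}$ with $p=1$, $q=2$) each carry a nontrivial $m$-quasi-Einstein structure with constant scalar curvature, so each of (i)--(iii) indeed has constant $R$. Combining this with the forward direction establishes the stated equivalence.

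I do not expect a genuine obstacle in the corollary itself, since it is an assembly of previously proved statements; the real difficulty lives upstream, in the Bochner-type and rigidity arguments behind Theorem~\ref{theo3} and, above all, Theorem~\ref{theodim4} (the $\mathbb{S}^2_+\times\mathbb{S}^2$ case), which I am free to assume here. Within the corollary the only points requiring care are the numerical verification that the four choices of $k$ exhaust and separate the admissible list, and the minor identification of the Einstein factor in case~\textbf{(4)} as the round sphere via simple-connectedness; both are routine.
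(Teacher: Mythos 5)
Your proposal is correct and follows essentially the same route as the paper, whose proof of Corollary~\ref{corA} is exactly the assembly you describe: Theorem~\ref{theo1} lists the four admissible values of $R$ for $n=4$, Remark~\ref{remL} handles $R=\frac{12}{m+3}\lambda$, Proposition~\ref{propK11} rules out $R=\frac{m+8}{m+2}\lambda$, Theorem~\ref{theo3} gives the cylinder for $R=3\lambda$, and Theorem~\ref{theodim4} gives $\mathbb{S}^2_+\times\mathbb{S}^2$ for $R=\frac{2(m+2)}{m+1}\lambda$. Your extra step identifying the Einstein factor $N$ as the round $\mathbb{S}^3$ (via $\pi_1(N)\cong\pi_1(M^4)=0$ and the fact that a compact simply connected Einstein three-manifold with $\lambda>0$ has constant curvature) is a legitimate and welcome filling-in of a detail the paper leaves implicit in its introductory discussion.
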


\vspace{0.30cm}
\medskip

In order to prove Theorem \ref{theo1}, we adapt an argument from \cite{Petersen-Chenxu} to determine the possible values of the constant scalar curvature for a compact quasi-Einstein manifold with boundary. Based on this list, we analyze the case $R=(n-1)\lambda,$ which is treated in Theorem \ref{theo3} and corresponds to the cylindrical model $I\times N,$ where $N$ is a compact $\lambda$-Einstein manifold. At this stage, Corollary \ref{coro-tensort} plays a key role, providing a characterization of quasi-Einstein manifolds with constant scalar curvature and vanishing tensor $T,$ defined in Lemma \ref{l1}. 

We then prove Corollary \ref{theo2}, which gives the classification of compact $3$-dimensional quasi-Einstein manifolds with boundary and constant scalar curvature. The proof combines Theorem \ref{theo1}, Proposition \ref{propK11}, Theorem \ref{theo3} and \cite[Proposition 2.4]{Petersen-Chenxu}. In particular, the isoparametric property of $u$ is crucial for establishing Proposition \ref{propK11} and for studying the set of maximum points of $u,$ denoted by $MAX(u).$

The proof of Theorem \ref{theodim4} is considerably more involved. We first consider the tensor $P=Ric-\rho g,$ where $\rho=\frac{(n-1)\lambda -R}{m-1}.$ Diagonalizing $P,$ we will see that, in our setting, the first eigenvalue $\mu_{1}$ of $P$ is zero and therefore, $$Tr(P)=\mu_{2}+\mu_{3}+\mu_{4}\,\,\,\,\hbox{and}\,\,\,\,|P|^2 = \mu_{2}^2 +\mu_{3}^2 +\mu_{4}^2.$$ Furthermore, we will deduce that $$Tr(P)=2m\rho\,\,\,\,\hbox{and}\,\,\,\,|P|^2=2m^2 \rho^2 =\frac{1}{2}(Tr(P))^2.$$ Thus, we need to obtain a third equation involving $P$ in order to determine its eigenvalues. To this end, we derive a formula for $u\Delta (Ric)$ (Lemma \ref{lemKA1}) and then use this to establish a formula for $u\Delta (Tr\,(P^3)),$ where $Tr(P^3)=P_{ij}P_{jl}P_{li}$ (Proposition \ref{propKK1}). Next, we prove Lemma \ref{lemKAp}, which provides an inequality for the operator $L_{m+2}$ acting on the nonnegative function $$|\nabla u|^2 (Tr(P^3)-2m^{3}\rho^{3}).$$ It should be emphasized that obtaining such a suitable nonnegative function involves intricate computations (Propositions \ref{propKK1} and \ref{propKK2}). Again, the isoparametric property of $u$ is essential, as it is used to compare the curvature of $M^4$ with that of the level sets of $u$ via the Gauss equation. These level sets are three-dimensional, and therefore their curvature can be expressed in terms of the Ricci tensor. We then apply an integration by parts argument to show that $$Tr(P^3)-2m^{3}\rho^{3}=0,$$ which implies that the eigenvalues of the Ricci tensor $\lambda_{i},$ $1\leq i\leq 4,$ of $M^4$ are $$\lambda_{1}=\lambda_{2}=\frac{\lambda}{m+1}\,\,\,\hbox{and}\,\,\,\lambda_{3}=\lambda_{4}=\lambda.$$ From this, one concludes that $M^4$ is isometric to $\mathbb{S}^{2}_{+}\times\mathbb{S}^2$ with the product metric.

To prove Corollary \ref{corA}, it suffices to combine Theorem \ref{theo1}, Proposition 2.4 of \cite{Petersen-Chenxu}, Proposition \ref{propK11}, Theorem \ref{theo3} and Theorem \ref{theodim4}.

\vspace{0.30cm}
	\medskip

The rest of this paper is organized as follows. In Section \ref{preliminaries}, we review some basic facts and useful results on $m$-quasi-Einstein manifolds that will be used in the proofs of the main theorems. Some novel lemmas will be discussed in Section \ref{secKL}. Section \ref{secProofs} collects the proofs of Theorem \ref{theo1}, Theorem \ref{theo3} and Corollary \ref{theo2}. Finally, the proofs of Theorem \ref{theodim4} and Corollary \ref{corA} are presented in Section \ref{secthm4}. We also include an appendix to explain a few remarks used throughout this article.

\section{Preliminaries}
\label{preliminaries}

In this section, we review some basic facts and present some features that will play a
fundamental role in the proof of the main results.

\subsection{Background} 
\label{Sec2}

Throughout this paper, we adopt the following convention for the curvatures:

$$\text{Rm}(X,Y)=\nabla^2_{Y,X}-\nabla^2_{X,Y}, \quad  Rm(X,Y,Z,W)=g(\text{Rm}(X,Y)Z,W),$$
$$ K(e_i,e_j)=Rm(e_i,e_j,e_i,e_j), \quad  Ric(X,Y)=\text{tr}\, Rm(X, \cdot, Y, \cdot),$$
$$  R_{ij}=Ric(e_i, e_j), \quad  R=\text{tr}\,Ric.$$  
\vskip 0.2cm

Given a warped product manifold $(I\times_{\varphi} N,\,g=dt^2+\varphi^2(t)g_{_N}),$ where $\varphi$ is a positive smooth (warping) function defined on the interval $I,$ and given a smooth function $f(t,x)=f(t),$ one easily verifies that

\begin{eqnarray}\label{Hlie}
    2\nabla^2 f=2f''(t)dt^2+2f'(t)\varphi(t)\varphi'(t)g_{_N}.
\end{eqnarray}

We also recall the following formulae for warped product manifolds (cf. \cite{kk,O'Neil}).

\begin{proposition}[\cite{O'Neil}]
\label{laux}
The Ricci curvature of a warped product manifold $M=B\times_\varphi F,$ with $l=dim(F),$ must satisfy:
	\begin{itemize}
		\item[(i)] $Ric(X,Y)=Ric_B(X,Y)-\frac{l}{\varphi}\nabla^2\varphi(X,Y),$
		\item[(ii)] $Ric(X,V)=0,$
		\item[(iii)] $Ric(V,W)=Ric_F(V,W)-(\varphi\Delta \varphi+(l-1)|\nabla \varphi|^2)g_F(V,W),$
	\end{itemize} where  $X,\,Y$ and $V,\,W$ are horizontal and vertical vectors, respectively.
\end{proposition}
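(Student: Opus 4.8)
The plan is to prove this classical result of O'Neill in three stages: first derive the Levi-Civita connection of the warped product metric $g=g_B+\varphi^2 g_F$ from the Koszul formula, then compute the relevant components of the Riemann curvature tensor, and finally trace to extract the Ricci tensor. Throughout I would work with horizontal vector fields $X,Y,Z$ (lifts of vector fields on $B$) and vertical vector fields $V,W,U$ (lifts of vector fields on $F$), exploiting the basic property of such lifts that $[X,V]=0$ and that the fiber metric scales as $g(V,W)=\varphi^2 g_F(V,W)$.

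First I would establish the connection formulas. Applying the Koszul formula
\[
2g(\nabla_A B,C)=A\,g(B,C)+B\,g(A,C)-C\,g(A,B)+g([A,B],C)-g([A,C],B)-g([B,C],A)
\]
to the three types of triples, and using $[X,V]=0$ together with $g(V,W)=\varphi^2 g_F(V,W)$, one obtains
\[
\nabla_X Y=\nabla^B_X Y,\qquad \nabla_X V=\nabla_V X=\frac{X(\varphi)}{\varphi}\,V,\qquad \nabla_V W=\nabla^F_V W-\frac{g(V,W)}{\varphi}\,\nabla\varphi,
\]
where $\nabla\varphi$ denotes the gradient of $\varphi$ on the base $B$. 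The middle identity is the key mechanism by which the warping function couples the two factors, and it is the source of all the correction terms in the statement.

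Next I would compute the curvature operator $R(A,B)C$ for the combinations needed to trace the three cases. For purely horizontal inputs one finds $R(X,Y)Z=R^B(X,Y)Z$; the mixed curvatures $R(X,V)Y$ are governed by the Hessian $\nabla^2\varphi(X,Y)=X(Y\varphi)-(\nabla^B_X Y)\varphi$, which enters precisely through differentiating the coefficient $X(\varphi)/\varphi$ in the formula for $\nabla_X V$; and the purely vertical curvatures $R(V,W)U$ assemble from $R^F$ together with a term proportional to $|\nabla\varphi|^2$ arising from $g(\nabla_V\nabla\varphi,\cdot)$. These three curvature identities feed directly into the three asserted Ricci formulas.

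Finally I would trace over an adapted orthonormal frame to obtain the three identities. For $Ric(X,Y)$ the base directions reproduce $Ric_B(X,Y)$, while each of the $l$ fiber directions contributes a copy of $-\frac{1}{\varphi}\nabla^2\varphi(X,Y)$, producing the factor $l$; for $Ric(X,V)$ the relevant traces cancel, giving $0$; and for $Ric(V,W)$ the fiber directions reproduce $Ric_F(V,W)$ while the remaining horizontal and vertical sums assemble into $-(\varphi\Delta\varphi+(l-1)|\nabla\varphi|^2)g_F(V,W)$. The step demanding the most care is this final tracing: since a $g$-orthonormal frame on the fiber consists of the rescaled fields $\varphi^{-1}E_a$ with $\{E_a\}$ a $g_F$-orthonormal frame, one must track the compensating powers of $\varphi$ consistently, and must correctly separate the contribution $\varphi\Delta\varphi$ (the full base Laplacian of $\varphi$) from the $(l-1)|\nabla\varphi|^2$ term originating from the self-interaction of the vertical connection coefficients $X(\varphi)/\varphi$.
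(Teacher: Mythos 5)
The paper states this proposition without proof, simply citing O'Neill's \emph{Semi-Riemannian Geometry}, and your outline is precisely the argument given there: the Koszul formula yields the warped-product connection (your three formulas agree with O'Neill's Proposition 7.35), these feed into the curvature components (his Proposition 7.42), and tracing over an adapted frame gives the Ricci identities (his Corollary 7.43). Your key mechanisms are all correct --- the coupling term $\nabla_X V=\frac{X(\varphi)}{\varphi}V$ producing the Hessian in (i), the vanishing of mixed traces in (ii) because $R(X,Y)V=0$ and $R(V,X)Y$ is vertical while $R(X,V)W$ is horizontal, and the careful tracking of $\varphi$-powers in the $g$-orthonormal fiber frame $\{\varphi^{-1}E_a\}$ that separates $\varphi\Delta\varphi$ from $(l-1)|\nabla\varphi|^2$ in (iii) --- so the proposal is sound and coincides with the proof the paper invokes by citation.
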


As a consequence of Proposition \ref{laux}, as observed in \cite{Besse}, if $(M^n,\,g)$ is a warped product manifold with $g= dt^2+\varphi^2(t) g_{_N},$ where $g_{_N}$ is a $\kappa$-Einstein metric with $\kappa>0,$ and if either $\varphi(t)=\alpha t$ or $\varphi(t)=a \sinh(\sqrt{\beta}t)+ b\cosh(\sqrt{\beta}t)$, where $\alpha$ and $\beta$ are positive constants and $a,\,b\in\mathbb{R},$ then the scalar curvature $R$ of $M^n$ cannot be a positive constant.

	\medskip

\subsection{Quasi-Einstein Manifolds}
\label{Sec2.2}

In this subsection, we recall basic facts on $m$-quasi-Einstein manifolds. First of all, we remember that the fundamental equation of an $m$-quasi-Einstein manifold $(M^n,\,g,\,u,\lambda),$ possibly with boundary, is given by
\begin{eqnarray}\label{mqE}
    \nabla^2 u=\frac{u}{m}(Ric-\lambda g),
\end{eqnarray} where $u>0$ in the interior of $M$ and $u=0$ on the boundary $\partial M.$

By tracing \eqref{mqE}, one sees that
\begin{eqnarray}\label{eq-laplacian}
    \Delta u=\frac{u}{m}( R-n\lambda).
\end{eqnarray} This implies that $\Delta u=0$ along $\partial M.$ Besides, Propositions 2.2 and 2.3 of \cite{He-Petersen-Wylie2012} guarantee that $|\nabla u|$ does not vanish on the boundary and it is constant on each component of $\partial M.$ From this, we infer that $\nu=-\frac{\nabla u}{|\nabla u|}$ is the unit outward normal vector field over $\partial M.$ In particular, by the Stokes' formula, $\Delta u$ is not identically zero. Actually, we have
\begin{eqnarray}\label{int laplacian}
\int_M \Delta u \,dM_g &=&\int_{\partial M}\langle \nabla u,\nu\rangle\, dS_g\nonumber\\ &=&\sum_{i=1}^{l}\int_{\partial M_{i}}\langle \nabla u,\nu\rangle\, dS_g =-\sum_{i=1}^{l}|\nabla u|_{\mid_{\partial M_{i}}}\,|\partial M_{i}|< 0,
\end{eqnarray} where $\partial M=\cup_{i=1}^{l} \partial M_{i}$ and $\partial M_{i}$ are the connected components of $\partial M.$

\begin{remark}
\label{remJK}
It follows from (\ref{eq-laplacian}) and (\ref{int laplacian}) that if the scalar curvature $R$ is constant, then $ R< n\lambda$ (see \cite[Corollary 4.3]{He-Petersen-Wylie2012}). Thus, the scalar curvature $R$ cannot be $n\lambda$ in Theorem \ref{theo1}.  
\end{remark}

From now on, we consider an orthonormal frame $\{e_i\}_{i=1}^{n}$ with $e_1=\nu=-\frac{\nabla u}{|\nabla u|}.$ Under these coordinates, since $u=0$ on $\partial M,$ the second fundamental form satisfies
\begin{eqnarray*}
    h_{ab}=-\left\langle\nabla_{e_a}\nu,e_b \right\rangle=\frac{1}{|\nabla u|}\nabla_a\nabla_b u=0,
\end{eqnarray*} for any $2\leq a,b,c,d\leq n.$ Hence, $\partial M$ is totally geodesic. Also, by the Gauss equation, i.e., 
\begin{eqnarray*}
 R_{abcd}^{\partial M}= R_{abcd}-h_{ad}h_{bc}+h_{ac}h_{bd},
\end{eqnarray*} one obtains that
\begin{equation}
\label{eqe1}
 R^{\partial M}= R- 2R_{11}.
\end{equation}

We further recall some important features of $m$-quasi-Einstein manifolds (cf. \cite[Lemma 3.2 and Theorem 2.2]{CaseShuWei}, \cite[Proposition 2.1]{He-Petersen-Wylie2012} and \cite[Lemma 2.1]{compact}).

\begin{lemma}
\label{lemmafund}
Let $(M^n,\,g,\,u,\,\lambda)$ be an $m$-quasi-Einstein manifold with $m>1.$ Then we have:
\begin{enumerate}
\item $\frac{1}{2}u\nabla R=-(m-1)Ric(\nabla u)-(R-(n-1)\lambda)\nabla u;$

\item $\frac{u^2}{m}\left(R-\lambda n\right)+(m-1)|\nabla u|^2 =-\lambda u^2 +\mu,$ where $\mu$ is a constant;
\item \begin{eqnarray*}
\frac{1}{2}\Delta R &=&-\frac{m+2}{2u}\langle \nabla u,\,\nabla R\rangle -\frac{m-1}{m}\left|Ric-\frac{R}{n}g\right|^2\nonumber\\&& -\frac{(n+m-1)}{mn}\left(R-n\lambda\right)\left(R-\frac{n(n-1)}{n+m-1}\lambda\right);
\end{eqnarray*}
\item $u\left(\nabla_{i}R_{jk}-\nabla_{j}R_{ik}\right)=m R_{ijkl}\nabla_{l}u+\lambda\left(\nabla_{i}ug_{jk}-\nabla_{j}u g_{ik}\right)-\left(\nabla_{i}u R_{jk}-\nabla_{j}u R_{ik}\right).$

\end{enumerate}
\end{lemma}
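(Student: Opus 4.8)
The plan is to derive all four identities from the single fundamental equation \eqref{mqE}, written in a local orthonormal frame as $u_{ij}=\tfrac{u}{m}(R_{ij}-\lambda g_{ij})$, together with its trace \eqref{eq-laplacian}, the contracted second Bianchi identity $\nabla_i R_{ij}=\tfrac12\nabla_j R$, and the Ricci commutation formula, all read in the curvature sign convention fixed in Section \ref{Sec2}. I would start with identity (4), since it is the structural one from which (1) follows by a contraction. Differentiating $m\,u_{ij}=u(R_{ij}-\lambda g_{ij})$ once more gives $m\,\nabla_k u_{ij}=\nabla_k u\,(R_{ij}-\lambda g_{ij})+u\,\nabla_k R_{ij}$; antisymmetrizing in the first two indices and applying the Ricci identity $\nabla_i u_{jk}-\nabla_j u_{ik}=R_{ijkl}\nabla_l u$ turns the left-hand side into $m R_{ijkl}\nabla_l u$ and leaves the right-hand side as exactly the stated combination $\lambda(\nabla_i u\,g_{jk}-\nabla_j u\,g_{ik})-(\nabla_i u\,R_{jk}-\nabla_j u\,R_{ik})$. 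The only real hazard here is keeping the curvature sign consistent with the convention $\mathrm{Rm}(X,Y)=\nabla^2_{Y,X}-\nabla^2_{X,Y}$.

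For (1), I would contract (4) over the pair $(i,k)$. Using $R_{ijil}=R_{jl}$, $R_{ii}=R$, and the contracted Bianchi identity, the left side collapses to $-\tfrac{u}{2}\nabla_j R$ and the right side to $(m-1)Ric(\nabla u)_j+(R-(n-1)\lambda)\nabla_j u$, which is precisely (1). Alternatively, (1) can be obtained without (4) by computing the divergence $\nabla_i u_{ij}$ two ways: the Bochner formula gives $\nabla_i u_{ij}=\nabla_j\Delta u+R_{jl}\nabla_l u$, while differentiating \eqref{mqE} gives $\nabla_i u_{ij}=\tfrac1m(Ric(\nabla u)_j-\lambda u_j+\tfrac{u}{2}\nabla_j R)$; substituting $\Delta u=\tfrac{u}{m}(R-n\lambda)$ from \eqref{eq-laplacian} and solving yields (1). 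I would likely present this second route, as it is self-contained.

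Identity (2) I would establish as a first integral. Setting $F=\tfrac{u^2}{m}(R-n\lambda)+(m-1)|\nabla u|^2+\lambda u^2$, I would show $\nabla F\equiv 0$. Differentiating and substituting $u_{kj}u_k=\tfrac{u}{m}(Ric(\nabla u)_j-\lambda u_j)$ from \eqref{mqE} together with $\tfrac{u^2}{m}\nabla_j R$ rewritten via (1), the $Ric(\nabla u)$ contributions cancel and the coefficient of $u\,u_j$ reduces identically to zero. Hence $F$ equals a constant $\mu$, which is the claimed relation once rearranged as $-\lambda u^2+\mu$.

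Finally, (3) is the Bochner-type equation for $\Delta R$, which I would obtain by taking the divergence of (1). On the left, $\mathrm{div}(\tfrac{u}{2}\nabla R)=\tfrac12\langle\nabla u,\nabla R\rangle+\tfrac{u}{2}\Delta R$; on the right I would expand using $\mathrm{div}(Ric(\nabla u))=\tfrac12\langle\nabla R,\nabla u\rangle+R_{jl}u_{jl}$ with $R_{jl}u_{jl}=\tfrac{u}{m}(|Ric|^2-\lambda R)$ coming from \eqref{mqE}, and $\mathrm{div}((R-(n-1)\lambda)\nabla u)=\langle\nabla R,\nabla u\rangle+(R-(n-1)\lambda)\tfrac{u}{m}(R-n\lambda)$. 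Collecting the $\langle\nabla u,\nabla R\rangle$ terms produces the coefficient $-\tfrac{m+2}{2u}$ after dividing by $u$, and one is left with $-\tfrac{m-1}{m}(|Ric|^2-\lambda R)-\tfrac1m(R-(n-1)\lambda)(R-n\lambda)$. The step I expect to be the most delicate is the purely algebraic rearrangement of this remainder into the stated form: one completes $|Ric|^2$ to the trace-free norm $|Ric-\tfrac{R}{n}g|^2=|Ric|^2-\tfrac{R^2}{n}$ and then verifies, as a short but sign-sensitive polynomial identity in $R$ and $\lambda$, that the leftover quadratic factors as $\tfrac{n+m-1}{mn}(R-n\lambda)(R-\tfrac{n(n-1)}{n+m-1}\lambda)$.
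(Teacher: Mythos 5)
Your proposal is correct: all four derivations check out in the paper's curvature conventions (in particular, the antisymmetrization giving (4) is consistent with $\mathrm{Rm}(X,Y)=\nabla^2_{Y,X}-\nabla^2_{X,Y}$ and $Ric=\mathrm{tr}\,Rm(X,\cdot,Y,\cdot)$, the coefficient of $u\,\nabla u$ in the first integral $F$ of (2) does vanish identically, and the quadratic remainder in (3) factors as $\tfrac{n+m-1}{mn}(R-n\lambda)\bigl(R-\tfrac{n(n-1)}{n+m-1}\lambda\bigr)$). The paper itself offers no proof of this lemma --- it recalls the identities from \cite{CaseShuWei,He-Petersen-Wylie2012} and \cite[Lemma 1]{compact} --- and your route (differentiating the fundamental equation, antisymmetrizing via the Ricci identity for (4), contracting for (1), exhibiting (2) as a first integral, and taking the divergence of (1) for (3)) is essentially the standard argument of those references, so there is nothing to compare beyond noting that your write-up is a self-contained version of the cited proofs.
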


We highlight that Eq. (2) of Lemma \ref{lemmafund} determines a type of ``integrability condition". Besides, Eq. (4) of Lemma \ref{lemmafund} was observed in \cite[Lemma 2.1]{compact}, see also \cite[Proposition 6.2]{He-Petersen-Wylie2012}.

From assertion (1) of Lemma \ref{lemmafund}, if an $m$-quasi-Einstein manifold $M^n$ has constant scalar curvature and $m\neq 1,$ then
\begin{eqnarray}\label{eq-ridu}
    Ric(\nabla u)=\frac{(n-1)\lambda- R}{m-1}\nabla u.
\end{eqnarray} Consequently, the traceless Ricci tensor $\mathring{Ric}$ must satisfy 
\begin{eqnarray}\label{ritdu}
    \mathring{Ric}(\nabla u)=\frac{n(n-1)\lambda-(m+n-1)R}{n(m-1)}\nabla u.
\end{eqnarray}

We now set the covariant $2$-tensor $P$ by 
\begin{eqnarray}\label{tensorP}
    P=Ric-\frac{(n-1)\lambda-R}{m-1}g. 
\end{eqnarray} In this perspective, by assuming that $M$ has constant scalar curvature, we have from \eqref{eq-ridu} that $P(\nabla u)=0.$ Furthermore, by using the orthonormal frame $\{e_i\}_{i=1}^{n}$ that diagonalizes the Ricci tensor, one observes that $P(e_i)=\mu_i e_i.$ In \cite{He-Petersen-Wylie2012}, it was introduced the $4$-tensor $Q$ related to $P$ as follows 

\begin{eqnarray}
\label{tensorQ}
    Q=Rm+\frac{1}{m}P\odot g+\frac{(n-m)\lambda-R}{2m(m-1)}g\odot g,
\end{eqnarray} where $\odot$ stands for the Kulkarni-Nomizu product and $Rm$ is the Riemann tensor. For covariant 2-tensors $S$ and $T,$ the Kulkarni-Nomizu product\footnote{Our definition of Kulkarni-Nomizu product differs from \cite{He-Petersen-Wylie2012} by a constant $1/2$ and sign.} is given by 
\begin{eqnarray}
\label{kulkarniP}
    (S\odot T)_{ijkl}=S_{ik}T_{jl}+S_{jl}T_{ik}-S_{il}T_{jk}-S_{jk}T_{il}.
\end{eqnarray}

With these tools, we have following proposition from \cite[Proposition 6.2]{He-Petersen-Wylie2012}. 

\begin{proposition}
\label{propKL1}
Let $(M^n,\,g,\,u,\,\lambda)$ be an $m$-quasi-Einstein manifold. Then we have:
\begin{equation*}
u(\nabla_{i}P_{jk}-\nabla_{j}P_{ik})= mQ_{ijkl}\nabla_{l}u+\frac{1}{2}(g\odot g)_{ijkl}P_{sl}\nabla_{s}u.
\end{equation*}

\end{proposition}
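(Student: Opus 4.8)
The plan is to differentiate the definition \eqref{tensorP} of $P$ directly, antisymmetrize the result in the indices $i$ and $j$, multiply through by $u$, and then feed in the two structural identities of Lemma \ref{lemmafund}. Writing $c=\frac{(n-1)\lambda-R}{m-1}$ so that $P=Ric-c\,g$, and recalling that $\lambda$ is constant while $g$ is parallel, one has $\nabla_i P_{jk}=\nabla_i R_{jk}+\frac{1}{m-1}(\nabla_i R)\,g_{jk}$. Antisymmetrizing and multiplying by $u$ therefore splits the left-hand side into a ``Ricci part'' and a ``scalar part'':
\begin{equation*}
u(\nabla_i P_{jk}-\nabla_j P_{ik})=u(\nabla_i R_{jk}-\nabla_j R_{ik})+\frac{u}{m-1}\big((\nabla_i R)\,g_{jk}-(\nabla_j R)\,g_{ik}\big).
\end{equation*}
For the Ricci part I would substitute Eq. (4) of Lemma \ref{lemmafund} verbatim, which supplies exactly the curvature term $mR_{ijkl}\nabla_l u$ together with lower-order $\lambda$- and $Ric$-terms. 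For the scalar part the key observation is that Eq. (1) of Lemma \ref{lemmafund} rearranges into the pointwise identity $\frac{u}{m-1}\nabla_i R=-2P_{il}\nabla_l u$; indeed $\frac{1}{2}u\nabla R=-(m-1)Ric(\nabla u)-(R-(n-1)\lambda)\nabla u$ gives $\frac{u}{m-1}\nabla_i R=-2R_{il}\nabla_l u+2c\,\nabla_i u=-2(R_{il}-c\,g_{il})\nabla_l u$. This turns the scalar part into $-2\big(P_{il}\nabla_l u\,g_{jk}-P_{jl}\nabla_l u\,g_{ik}\big)$, which is precisely the contribution one needs on the right.

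For the right-hand side I would expand the Kulkarni-Nomizu blocks using \eqref{kulkarniP}. From the definition \eqref{tensorQ} of $Q$ one gets $mQ_{ijkl}\nabla_l u=mR_{ijkl}\nabla_l u+(P\odot g)_{ijkl}\nabla_l u+\frac{(n-m)\lambda-R}{2(m-1)}(g\odot g)_{ijkl}\nabla_l u$, and a short index computation yields $(P\odot g)_{ijkl}\nabla_l u=P_{ik}\nabla_j u-P_{jk}\nabla_i u+(P_{jl}\nabla_l u)g_{ik}-(P_{il}\nabla_l u)g_{jk}$ and $(g\odot g)_{ijkl}\nabla_l u=2(g_{ik}\nabla_j u-g_{jk}\nabla_i u)$. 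Likewise the last term of the statement expands as $\frac{1}{2}(g\odot g)_{ijkl}P_{sl}\nabla_s u=g_{ik}P_{jl}\nabla_l u-g_{jk}P_{il}\nabla_l u$. Collecting, the two $g$-contracted $P$-terms combine into $2(P_{jl}\nabla_l u)g_{ik}-2(P_{il}\nabla_l u)g_{jk}$, matching the scalar part found above, while the two curvature terms $mR_{ijkl}\nabla_l u$ coincide.

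The remaining task is to match the purely algebraic leftover, and this is where the one genuine cancellation occurs. Rewriting every $P$ back in terms of $Ric$ via $P=Ric-c\,g$, the leftover on the right becomes $R_{ik}\nabla_j u-R_{jk}\nabla_i u+(-c+d)(g_{ik}\nabla_j u-g_{jk}\nabla_i u)$ with $d=\frac{(n-m)\lambda-R}{m-1}$, whereas Eq. (4) of Lemma \ref{lemmafund} contributes $\lambda(g_{jk}\nabla_i u-g_{ik}\nabla_j u)-(R_{jk}\nabla_i u-R_{ik}\nabla_j u)$. The two expressions agree provided $-c+d=-\lambda$, and indeed $-c+d=\frac{-(n-1)\lambda+R+(n-m)\lambda-R}{m-1}=\frac{(1-m)\lambda}{m-1}=-\lambda$. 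I expect the main obstacle to be purely bookkeeping: keeping the index positions in the Kulkarni-Nomizu products consistent with the curvature sign convention $\mathrm{Rm}(X,Y)=\nabla^2_{Y,X}-\nabla^2_{X,Y}$ fixed in \S\ref{Sec2}, since an error in any single antisymmetrization would spoil the final coefficient. Notably, no assumption of constant scalar curvature is required: the identity $\frac{u}{m-1}\nabla_i R=-2P_{il}\nabla_l u$ holds pointwise, so the result is valid for an arbitrary $m$-quasi-Einstein manifold.
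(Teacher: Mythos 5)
Your proposal is correct and follows essentially the same route as the paper's proof: both arguments rest on exactly the two identities (1) and (4) of Lemma \ref{lemmafund} (your pointwise identity $\frac{u}{m-1}\nabla_i R=-2P_{il}\nabla_l u$ is precisely the paper's $\frac{u}{2}\nabla\rho=P(\nabla u)$) together with the same Kulkarni--Nomizu expansions, and your coefficient check $-c+d=-\lambda$ reproduces the paper's identification of the $\frac{(n-m)\lambda-R}{2(m-1)}(g\odot g)$ block of $Q$. The only difference is organizational --- you expand $\nabla P$ into Ricci and scalar parts and match both sides, while the paper rewrites Eq.\ (4) in terms of $P$ and works forward --- which is immaterial.
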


As a consequence of Proposition \ref{propKL1}, we obtain the following identities, originally establi\-shed by He, Petersen, and Wylie in \cite[Proposition 3.7]{Petersen-Chenxu}. Note that our convention for the Kulkarni-Nomizu product (\ref{kulkarniP}) and $Ric(X,Y)=\text{tr}\, Rm(X, \cdot, Y, \cdot)$ differ from \cite{Petersen-Chenxu}.

\begin{proposition}[\cite{Petersen-Chenxu}]
\label{prop-p}
Let $(M^n,\,g,\,u,\,\lambda)$ be an $m$-quasi-Einstein manifold with constant scalar curvature and $m>1.$ Then we have:
   \begin{enumerate}
   \item $$\frac{u}{m}(\nabla_i P_{jk}-\nabla_j P_{ik})=\frac{u}{m}(\nabla_i R_{jk}-\nabla_j R_{ik})=Q_{ijkl}\nabla_l u,$$
   \item $$ \frac{u}{m}\nabla_i P_{jk}\nabla_i u =\left(\frac{u}{m}\right)^2\left(\left(\lambda-\rho\right)P_{jk}-P_{ik}P_{ij}\right)+Q_{ijkl}\nabla_l u\nabla_i u,$$
   \end{enumerate} where $\rho=\frac{(n-1)\lambda-R}{m-1}.$
\end{proposition}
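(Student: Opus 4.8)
The plan is to derive both identities directly from Proposition \ref{propKL1}, which already packages the full second Bianchi-type information into the tensors $P$ and $Q$. The guiding observation is that the auxiliary term $\frac{1}{2}(g\odot g)_{ijkl}P_{sl}\nabla_s u$ appearing there is built entirely from the contraction $P_{sl}\nabla_s u = P(\nabla u)$, and under the constant scalar curvature hypothesis we know from \eqref{eq-ridu} (equivalently assertion (1) of Lemma \ref{lemmafund}) that $P(\nabla u)=0$. So the first step is simply to record $P_{sl}\nabla_s u=0$ and substitute it into Proposition \ref{propKL1}, which collapses the right-hand side to $mQ_{ijkl}\nabla_l u$ and yields $\frac{u}{m}(\nabla_i P_{jk}-\nabla_j P_{ik})=Q_{ijkl}\nabla_l u$. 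The equality with $\frac{u}{m}(\nabla_i R_{jk}-\nabla_j R_{ik})$ is immediate because $P$ and $Ric$ differ by $\rho g$ with $\rho$ constant (as $R$ is constant), so the antisymmetrized covariant derivatives of $P$ and $Ric$ coincide: $\nabla_i P_{jk}-\nabla_j P_{ik}=\nabla_i R_{jk}-\nabla_j R_{ik}$. This establishes assertion (1).

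For assertion (2), the idea is to contract the tensorial identity $u(\nabla_i P_{jk}-\nabla_j P_{ik})=mQ_{ijkl}\nabla_l u$ against $\nabla_i u$ and then rewrite the resulting $u(\nabla_j P_{ik})\nabla_i u$ term. First I would multiply by $\frac{1}{m}$ and contract in the index $i$ with $\nabla_i u$, obtaining
\begin{equation*}
\frac{u}{m}\nabla_i P_{jk}\nabla_i u-\frac{u}{m}\nabla_j P_{ik}\nabla_i u=Q_{ijkl}\nabla_l u\nabla_i u.
\end{equation*}
The task is then to evaluate the second term on the left, $\frac{u}{m}\nabla_j P_{ik}\nabla_i u$. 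The plan is to commute the derivative and the contraction: write $\nabla_j(P_{ik}\nabla_i u)=\nabla_j P_{ik}\nabla_i u+P_{ik}\nabla_j\nabla_i u$, and use that $P_{ik}\nabla_i u=0$ identically (so the left side vanishes, and also its $\nabla_j$ derivative vanishes). This gives $\nabla_j P_{ik}\nabla_i u=-P_{ik}\nabla_j\nabla_i u$. Now the Hessian is supplied by the fundamental equation \eqref{mqE}, namely $\nabla_j\nabla_i u=\frac{u}{m}(R_{ij}-\lambda g_{ij})$, which in terms of $P$ reads $\nabla_i\nabla_j u=\frac{u}{m}(P_{ij}+(\rho-\lambda)g_{ij})$. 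Substituting, I would get $\nabla_j P_{ik}\nabla_i u=-\frac{u}{m}P_{ik}(P_{ij}+(\rho-\lambda)g_{ij})=-\frac{u}{m}(P_{ik}P_{ij}-(\lambda-\rho)P_{jk})$, where I used $P_{ik}g_{ij}=P_{jk}$.

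Assembling these pieces, the term $-\frac{u}{m}\nabla_j P_{ik}\nabla_i u$ becomes $\left(\frac{u}{m}\right)^2\bigl((\lambda-\rho)P_{jk}-P_{ik}P_{ij}\bigr)$, and moving it to the right-hand side of the contracted identity produces exactly
\begin{equation*}
\frac{u}{m}\nabla_i P_{jk}\nabla_i u=\left(\frac{u}{m}\right)^2\bigl((\lambda-\rho)P_{jk}-P_{ik}P_{ij}\bigr)+Q_{ijkl}\nabla_l u\nabla_i u,
\end{equation*}
which is assertion (2). The only subtle bookkeeping step — and the one I expect to be the main place to be careful — is the sign and index handling in differentiating the constraint $P_{ik}\nabla_i u=0$: one must make sure to differentiate the genuine vanishing scalar/tensor $P(\nabla u)$ rather than treat the contraction loosely, and to feed in the correct Hessian of $u$ from \eqref{mqE} expressed through $P$ and the constant $\rho$. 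Everything else is a routine substitution once $P(\nabla u)=0$ and the constancy of $\rho$ (hence of $R$) are invoked.
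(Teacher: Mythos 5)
Your proposal is correct and follows essentially the same route as the paper's proof: both establish assertion (1) by feeding $P(\nabla u)=0$ (a consequence of constant scalar curvature, with the $P$--$Ric$ equality following from the constancy of $\rho$) into Proposition \ref{propKL1}, and both establish assertion (2) by differentiating the constraint $P_{ik}\nabla_i u=0$ and substituting the Hessian from \eqref{mqE} written in terms of $P$ and $\rho$. The only caveat is a harmless slip of wording in your assembly step: it is $+\frac{u}{m}\nabla_j P_{ik}\nabla_i u$ (i.e.\ the term after it has been moved to the right-hand side) that equals $\left(\frac{u}{m}\right)^2\bigl((\lambda-\rho)P_{jk}-P_{ik}P_{ij}\bigr)$, not the term $-\frac{u}{m}\nabla_j P_{ik}\nabla_i u$ as literally written, and your final displayed identity is nonetheless stated correctly.
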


Now, it is convenient to recall the following terminology (see \cite{Petersen-Chenxu}).

\begin{definition}
An $m$-quasi-Einstein manifold $(M^n,\,g,\,u,\,\lambda)$ is said to be \textit{rigid} if it is Einstein or its universal cover is a product of Einstein manifolds.  
\end{definition}

In \cite{Petersen-Chenxu}, it was established the following result for rigid $m$-quasi-Einstein manifolds.

\begin{proposition}[\cite{Petersen-Chenxu}]
\label{prop-hpw}
A non-trivial complete rigid $m$-quasi-Einstein manifold $(M^n,\,g,\,u,\,\lambda)$ is one of the examples in Table 2.1 of \cite{Petersen-Chenxu}, or its universal cover splits off as $$\widetilde{M}=(M_1,g_1)\times(M_2,g_2)\,\,\,\,\,\,\hbox{with}\,\,\,\,\,\,\,\,u(x,y)=u(y),$$ where $(M_{1},g_1,\,\lambda)$ is a trivial quasi-Einstein manifold and $(M_{2},\,g_2,\,u)$ is one of the examples in Table 2.1 in \cite{Petersen-Chenxu}.  
\end{proposition}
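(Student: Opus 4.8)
The plan is to argue directly from the definition of rigidity, treating separately the two alternatives it permits, and to reduce everything to the universal cover $\widetilde{M}$. The potential $u$ lifts to a $\pi_1(M)$-invariant function $\tilde u=u\circ\pi$ which, since the covering map is a local isometry, still satisfies the fundamental equation \eqref{mqE}; I will keep writing $u$ for this lift. By the definition of rigid, either $(M^n,g)$ is Einstein, or $\widetilde M$ is a Riemannian product of Einstein manifolds, and these are the two cases to dispose of.

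First I would handle the Einstein case. If $Ric=\kappa g$, then \eqref{mqE} degenerates to the Obata--Tashiro equation $\nabla^2 u=\tfrac{\kappa-\lambda}{m}\,u\,g$, i.e. the Hessian of the (nonconstant, by nontriviality) potential is pointwise proportional to $g$. Invoking the classical Tashiro/Obata classification of complete manifolds admitting a nonconstant solution of $\nabla^2 u=\phi\,g$, the manifold must be a warped product over an interval of a sphere, Euclidean space, or hyperbolic space; matching the Einstein constant $\kappa$ against $\lambda$ and $m$ then pins down the model to exactly the non-product entries of Table 2.1 in \cite{Petersen-Chenxu}. This settles the first alternative.

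For the product case, write the de Rham decomposition $\widetilde M=N_1\times\cdots\times N_k$ into Einstein factors with $Ric_{N_i}=\kappa_i g_i$, so that $Ric=\bigoplus_i\kappa_i g_i$ is block diagonal. Since the right-hand side of \eqref{mqE} is then block diagonal, the mixed Hessians $\nabla^2_{X,Y}u$ vanish whenever $X,Y$ are tangent to distinct factors; as cross-covariant derivatives vanish on a Riemannian product, this forces the additive separation $u=\sum_i u_i(x_i)$ up to a constant. Restricting \eqref{mqE} to the $i$-th factor gives $\nabla^2_{(i)}u_i=\tfrac{u}{m}(\kappa_i-\lambda)g_i$, and tracing yields $\Delta_{(i)}u_i=\tfrac{\dim N_i}{m}(\kappa_i-\lambda)\,u$. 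The left-hand side depends only on $x_i$, whereas the right-hand side contains the term $\tfrac{\dim N_i(\kappa_i-\lambda)}{m}\sum_{j\ne i}u_j(x_j)$; comparing variable-dependence yields the crucial dichotomy: for each $i$, either $\kappa_i=\lambda$, or $u$ is constant in all the variables $x_j$ with $j\ne i$.

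The key structural step is then to conclude that $u$ is nonconstant on at most one factor. If two factors had $\kappa_i\neq\lambda$ and $\kappa_j\neq\lambda$, the dichotomy would simultaneously force $u$ to depend only on $x_i$ and only on $x_j$, forcing $u$ constant, a contradiction; hence at most one factor, say $N_{i_0}$, has $\kappa_{i_0}\neq\lambda$, and all others satisfy $\kappa_j=\lambda$, whence $\nabla^2_{(j)}u_j\equiv 0$. On such a factor a nonconstant $u_j$ would have parallel gradient, splitting off a line of vanishing Ricci curvature and contradicting $\kappa_j=\lambda$ once $\lambda\neq 0$; thus $u_j$ is constant there. Collecting these factors into $(M_1,g_1)=\prod_{j\neq i_0}N_j$, equation \eqref{mqE} with $u$ constant and positive gives $Ric_{M_1}=\lambda g_1$, so $M_1$ is a trivial (hence Einstein) $\lambda$-quasi-Einstein factor, while the surviving single Einstein factor $(M_2,g_2)=N_{i_0}$ carries a nonconstant $u$ and so reduces to the Einstein case, identifying it with a Table 2.1 example and giving $u(x,y)=u(y)$. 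I expect the main obstacle to be precisely this last dichotomy—justifying additive separation rigorously and, via completeness together with the sign of $\lambda$, excluding a nonconstant affine potential on a Euclidean factor coexisting with another nonconstant factor—along with the bookkeeping that correctly absorbs the trivial factors into $M_1$.
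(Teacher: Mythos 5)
A preliminary remark on the comparison itself: the paper offers no proof of this proposition --- it is imported verbatim from \cite{Petersen-Chenxu} --- so your argument can only be measured against the strategy of that source, and in fact it reconstructs that strategy faithfully. Passing to the universal cover, treating the Einstein alternative via $\nabla^2 u=\frac{\kappa-\lambda}{m}\,u\,g$ and a Tashiro/Obata-type classification (which, once the conditions $u>0$ in the interior and $u=0$ on the boundary are imposed, is precisely what Table 2.1 encodes through the classification over Einstein bases in \cite{He-Petersen-Wylie2012}), and, in the product alternative, using block-diagonality of $Ric$ to kill the mixed Hessians, deducing the additive separation $u=\sum_i u_i$, and running the trace/variable-separation dichotomy to show that at most one factor with $\kappa_i\neq\lambda$ can carry the potential: all of this is correct and is the right proof. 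Note also that once one factor has $\kappa_{i_0}\neq\lambda$, restricting the equation to any other factor gives $0=\nabla^2_{(j)}u_j=\frac{u}{m}(\kappa_j-\lambda)g_j$ with $u>0$, which confirms $\kappa_j=\lambda$ and hence that $M_1$ is genuinely a trivial $\lambda$-quasi-Einstein factor.

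The one genuine gap is the case you flag but then dispose of incorrectly: when a factor satisfies $\kappa_j=\lambda$, you argue that a nonconstant $u_j$ has parallel gradient, splits off a Ricci-flat line, and ``contradicts $\kappa_j=\lambda$ once $\lambda\neq 0$; thus $u_j$ is constant there.'' For $\lambda\neq 0$ this is fine, but for $\lambda=0$ there is no contradiction and the conclusion ``$u_j$ constant'' fails: a linear potential on a flat (half-)line times a Ricci-flat factor is a legitimate nontrivial quasi-Einstein manifold and appears in Table 2.1 (the $\lambda=0$ row), so this case must be routed into the classification, not excluded. Worse, when $\lambda=0$ the trace dichotomy yields no information on any factor (both sides vanish identically), so several flat factors could simultaneously carry nonconstant affine pieces $u_j$; one then needs the extra observation that an affine function on a Euclidean factor $\mathbb{R}^k$ depends, after an orthogonal change of the splitting, on a single direction, which restores the asserted form $u(x,y)=u(y)$ with $M_2$ a line or ray. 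Since the present paper invokes the proposition only for compact manifolds with boundary, where $\lambda>0$ is automatic by \cite{He-Petersen-Wylie2012}, this loose end is harmless for the application at hand, but as the proposition is stated for all complete rigid quasi-Einstein manifolds and all $\lambda$, your argument as written leaves the $\lambda=0$ case unproven.
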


Notice that the example on the hemisphere $\Bbb{S}^n_{+},$ already mentioned in the Introduction, also appear in Table 2.1 of \cite{Petersen-Chenxu}. 

\begin{remark}
\label{remarkpifinite}
It is known that the universal covering of a quasi-Einstein mani\-fold with $\lambda>0$ is compact (including the case with nonempty boundary), and hence its fundamental group $\pi_{1}(M)$ is finite. The proof of this fact is similar to the arguments found in \cite{Fl-Gr2,WylieF}, and can be carried out by combining the techniques used in the proof of  \cite[Theorem 4.1]{He-Petersen-Wylie2012} (see also \cite{Qian}) and \cite[Remark 6.9]{Rimoldi2}.
\end{remark}

Before proceeding, we recall that a non-constant function $f:M\rightarrow\mathbb{R}$ of class at least $C^2$ is said to be \textit{transnormal} if
\begin{eqnarray}\label{eqt}
    |\nabla f|^2=b(f)
\end{eqnarray} for some $C^2$ function $b$ on the range of $f$ in $\Bbb{R}.$ We say that $f$ is \textit{isoparametric} if it is transnormal and there is a continuous function $a$ on the range of $f$ in $\mathbb{R}$ such that
\begin{eqnarray}\label{eqi}
    \Delta f= a(f).
\end{eqnarray} In particular, \eqref{eqt} implies that the level set hypersurfaces of $f$ (i.e., $M_{t}=f^{-1}(t),$ where $t$ is a regular value of $f$) are parallel, and the integral curves of $\nabla f$ are the shortest geodesics connecting the level sets. Besides, \eqref{eqi} guarantees that such hypersurfaces have constant mean curvatures. The preimage of the maximum (respectively, minimum) of an isoparametric (or transnormal) function $f$ is called the {\it focal variety} of $f.$ We refer the reader to \cite{GT,GT2,Miyaoka,Wang} for more details.

By considering that $(M^n,\,g,\,u,\,\lambda)$ is an $m$-quasi-Einstein manifold with constant scalar curvature, one deduces from assertion (2) of Lemma \ref{lemmafund}, for $m>1,$ that 

\begin{eqnarray}
\label{transnormal}
    |\nabla u|^2=\frac{\mu}{m-1}-\frac{ R+(m-n)\lambda}{m(m-1)}u^2.
\end{eqnarray} Consequently, the potential function $u$ is transnormal, namely,
\begin{eqnarray}\label{eqb}
  b(u)=\frac{\mu}{m-1}-\frac{ R+(m-n)\lambda}{m(m-1)}u^2.
\end{eqnarray} Therefore, it follows from \eqref{eq-laplacian} that the potential function $u$ is isoparametric.

Concerning the regularity of the potential function, for an $m$-quasi-Einstein manifold $(M^n,\,g,\,u,\,\lambda),$ it is known that $u$ and $g$ are real analytic in harmonic coordinates (cf. Proposition 2.4 in \cite{He-Petersen-Wylie2012}). In particular, the critical level sets of $u$ have zero measure.

A central object in our approach is the set of maximum points of $u$ given by
\begin{eqnarray*}
    MAX(u)=\{p\in M:\, u(p)=u_{\max}\}.
\end{eqnarray*}

\begin{remark}
\label{remK}
In the compact case with $m>1,$ notice that every point in $MAX(u),$ which clearly is an interior point, must be a critical point. Moreover, from the fact that $u$ is a transnormal function and \eqref{transnormal}, one deduces that the critical points of $u$ have the same value. Therefore, $MAX(u)=Crit(u)$ for nontrivial compact $m$-quasi-Einstein manifolds. 
\end{remark}

\section{Key Lemmas}
\label{secKL}
In this section, we shall provide several novel lemmas that will be used in the proofs of the main results. We start by recalling certain tensors that will be employed in the proofs of Theorem~\ref{theo3} and Corollary~\ref{theo2}. For a Riemannian manifold $(M^n,\,g),\,n\geq 4,$ the Weyl tensor is given by
\begin{eqnarray}\label{weyl}
    W_{ijkl}=R_{ijkl}-\frac{1}{n-2}(A\odot g)_{ijkl}, 
\end{eqnarray} where $A=Ric-\frac{R}{2(n-1)} g$ stands for the Schouten tensor. Another tensor that will be useful in our discussion is the Cotton tensor, for $n\geq 3,$
\begin{equation}\label{cot1}
    C_{ijk}=\nabla_i R_{jk}-\nabla_j R_{ik}-\frac{1}{2(n-1)}(\nabla_i R g_{jk}-\nabla_j R g_{ik}).
\end{equation} Next, for $n\geq 4,$ we have
\begin{equation}\label{cot2}
    C_{ijk}=-\left(\frac{n-2}{n-3}\right)\nabla_l W_{ijkl}.
\end{equation} Notice that $C_{ijk}$ is skew-symmetric in the first two indices and trace-free in any two indices.

It turns out that, on an $m$-quasi-Einstein manifold, we may express the Cotton tensor in terms of the Weyl tensor and an auxiliary 3-tensor $T_{ijk}$ as follows (see \cite[Lemma 2.2]{compact}).

\begin{lemma}[\cite{compact}]
\label{l1}
    Let $(M^n,\,g,\,u,\,\lambda)$ be an $m$-quasi-Einstein manifold. Then it holds
    \begin{equation}\label{eq3}
        uC_{ijk}=mW_{ijkl}\nabla_l u+T_{ijk},
    \end{equation} where the 3-tensor $T_{ijk}$ is given by
    \begin{eqnarray*}
        T_{ijk}&=&\frac{m+n-2}{n-2}(R_{ik}\nabla_j u-R_{jk}\nabla_i u)+\frac{m}{n-2}(R_{jl}\nabla_l ug_{ik}-R_{il}\nabla_l ug_{jk})\\
       & &+\frac{(n-1)(n-2)\lambda+m R}{(n-1)(n-2)}(\nabla_i ug_{jk}-\nabla_j ug_{ik})-\frac{u}{2(n-1)}(\nabla_i R g_{jk}-\nabla_j R g_{ik}).
    \end{eqnarray*}
    \end{lemma}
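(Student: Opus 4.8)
The plan is to start from assertion (4) of Lemma \ref{lemmafund}, which already expresses the skew-symmetrized covariant derivative of the Ricci tensor through the full Riemann tensor contracted against $\nabla u$, and then to trade the Riemann tensor for the Weyl tensor via the standard algebraic decomposition. First I would recall the definition \eqref{cot1} of the Cotton tensor and multiply it by $u$, so that
$$u C_{ijk} = u(\nabla_i R_{jk} - \nabla_j R_{ik}) - \frac{u}{2(n-1)}(\nabla_i R\, g_{jk} - \nabla_j R\, g_{ik}).$$
The second term on the right is already the last summand of $T_{ijk}$, so it may be set aside. Substituting assertion (4) of Lemma \ref{lemmafund} into the first term replaces $u(\nabla_i R_{jk} - \nabla_j R_{ik})$ by $m R_{ijkl}\nabla_l u + \lambda(\nabla_i u\, g_{jk} - \nabla_j u\, g_{ik}) - (\nabla_i u\, R_{jk} - \nabla_j u\, R_{ik})$.

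Next I would convert the Riemann curvature term into the Weyl tensor. Writing \eqref{weyl} in the form $R_{ijkl} = W_{ijkl} + \frac{1}{n-2}(A\odot g)_{ijkl}$, with $A = Ric - \frac{R}{2(n-1)}g$ the Schouten tensor, the term $m R_{ijkl}\nabla_l u$ becomes $m W_{ijkl}\nabla_l u + \frac{m}{n-2}(A\odot g)_{ijkl}\nabla_l u$; the first piece is precisely the $m W_{ijkl}\nabla_l u$ appearing on the right-hand side of \eqref{eq3}. The key computation is then to contract the Kulkarni--Nomizu product against $\nabla_l u$ using \eqref{kulkarniP}, that is, to expand
$$(A\odot g)_{ijkl}\nabla_l u = A_{ik}\nabla_j u + A_{jl}\nabla_l u\, g_{ik} - A_{il}\nabla_l u\, g_{jk} - A_{jk}\nabla_i u$$
and substitute $A_{ik} = R_{ik} - \frac{R}{2(n-1)}g_{ik}$. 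The Ricci pieces produce $R_{ik}\nabla_j u - R_{jk}\nabla_i u$ together with $R_{jl}\nabla_l u\, g_{ik} - R_{il}\nabla_l u\, g_{jk}$, while the four scalar-curvature pieces collapse to $\frac{R}{n-1}(\nabla_i u\, g_{jk} - \nabla_j u\, g_{ik})$.

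Finally I would collect the remaining terms and match them against $T_{ijk}$. The two contributions proportional to $R_{ik}\nabla_j u - R_{jk}\nabla_i u$—one of weight $\frac{m}{n-2}$ from the Kulkarni--Nomizu expansion and one of weight $1$ from the $-(\nabla_i u\, R_{jk} - \nabla_j u\, R_{ik})$ term of Lemma \ref{lemmafund}(4)—combine to the coefficient $\frac{m+n-2}{n-2}$, matching the first summand of $T_{ijk}$; the term $\frac{m}{n-2}(R_{jl}\nabla_l u\, g_{ik} - R_{il}\nabla_l u\, g_{jk})$ matches the second summand verbatim; and the scalar contributions $\lambda(\nabla_i u\, g_{jk} - \nabla_j u\, g_{ik})$ and $\frac{mR}{(n-1)(n-2)}(\nabla_i u\, g_{jk} - \nabla_j u\, g_{ik})$ add to $\frac{(n-1)(n-2)\lambda + mR}{(n-1)(n-2)}(\nabla_i u\, g_{jk} - \nabla_j u\, g_{ik})$, which is exactly the third summand. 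The main obstacle is purely in the bookkeeping: one must contract the Kulkarni--Nomizu product with the correct index symmetries and verify that the four separate scalar-curvature contributions coming from the trace part of the Schouten tensor genuinely coalesce into the single stated coefficient. No analytic input beyond Lemma \ref{lemmafund}(4) and the algebraic Weyl decomposition is required.
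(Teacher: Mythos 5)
Your proof is correct: starting from assertion (4) of Lemma \ref{lemmafund}, writing $R_{ijkl}=W_{ijkl}+\frac{1}{n-2}(A\odot g)_{ijkl}$ via \eqref{weyl}, and contracting the Kulkarni--Nomizu product against $\nabla_l u$ using \eqref{kulkarniP} produces exactly the stated coefficients $\frac{m+n-2}{n-2}$, $\frac{m}{n-2}$ and $\frac{(n-1)(n-2)\lambda+mR}{(n-1)(n-2)}$, with the $-\frac{u}{2(n-1)}(\nabla_i R\,g_{jk}-\nabla_j R\,g_{ik})$ term coming directly from the definition \eqref{cot1} of the Cotton tensor. The paper quotes this lemma from \cite{compact} without reproving it, but your derivation is the same argument used there and mirrors, step for step, the paper's own proof of Proposition \ref{propKL1}, where Lemma \ref{lemmafund}(4) is likewise recast through a Kulkarni--Nomizu decomposition (with $P$ and $Q$ playing the roles of the Schouten and Weyl tensors).
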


We highlight that the tensor $T_{ijk}$ has the same symmetries as the Cotton tensor and it is motivated by the approach employed by Cao and Chen in \cite{CC} in their study of Bach-flat gradient Ricci solitons; see also \cite{CH,QY}. Besides, it is convenient to express the tensor $T_{ijk}$ in terms of the traceless Ricci tensor

\begin{eqnarray}
\nonumber
    T_{ijk}&=&\frac{m+n-2}{n-2}(\mathring{R}_{ik}\nabla_j u-\mathring{R}_{jk}\nabla_i u)+\frac{m}{n-2}(\mathring{R}_{jl}\nabla_l u g_{ik}-\mathring{R}_{il}\nabla_l u g_{jk})\\\nonumber
    & &+\frac{n(n-1)\lambda-(m+n-1)R}{n(n-1)}(\nabla_i u g_{jk}-\nabla_j u g_{ik})\\\label{tensort}
    & &-\frac{u}{2(n-1)}(\nabla_i R g_{jk}-\nabla_j R g_{ik}).
\end{eqnarray}

With aid of this notation, we have the following lemma.

\begin{lemma}\label{l2}
Let $(M^n,g,u,\lambda)$ be an $m$-quasi-Einstein manifold with constant scalar curvature. Then we have:
\begin{eqnarray}
\label{righ}
    \mathring{R}_{ik}T_{ijk}\nabla_j u&=&\frac{m+n-2}{n-2}|\mathring{Ric}|^2|\nabla u|^2-\frac{2m+n-2}{n-2}\mathring{Ric}^2(\nabla u,\nabla u)\nonumber\\
    & &+\frac{\left(n(n-1)\lambda-(m+n-1)R\right)^2}{n^2 (n-1)(m-1)}|\nabla u|^2\nonumber\\
    &=& \frac{n-2}{2(m+n-2)}|T|^2,
\end{eqnarray} where $\mathring{Ric}^{2}_{ij}=\mathring{R}_{ik}\mathring{R}_{kj}$.
\end{lemma}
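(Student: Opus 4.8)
The plan is to treat the whole lemma as an algebraic identity, driven by two structural consequences of the constant scalar curvature hypothesis. First, $\nabla R=0$ annihilates the last line of the expression \eqref{tensort} for $T_{ijk}$. Second, identity \eqref{ritdu} says that $\nabla u$ is an eigenvector of the traceless Ricci tensor, namely $\mathring{R}_{il}\nabla_l u=\sigma\,\nabla_i u$ with $\sigma=\frac{n(n-1)\lambda-(m+n-1)R}{n(m-1)}$. I abbreviate $c=\frac{m+n-2}{n-2}$, $d=\frac{m}{n-2}$ and $e=\frac{n(n-1)\lambda-(m+n-1)R}{n(n-1)}$, so that under these hypotheses $T_{ijk}=c(\mathring{R}_{ik}\nabla_j u-\mathring{R}_{jk}\nabla_i u)+d(\mathring{R}_{jl}\nabla_l u\,g_{ik}-\mathring{R}_{il}\nabla_l u\,g_{jk})+e(\nabla_i u\,g_{jk}-\nabla_j u\,g_{ik})$.

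For the first equality I would contract this $T_{ijk}$ against $\mathring{R}_{ik}\nabla_j u$ and track the three groups of terms. The $c$-group yields $c\big(|\mathring{Ric}|^2|\nabla u|^2-\mathring{Ric}^2(\nabla u,\nabla u)\big)$ after applying $\mathring{R}_{ik}\nabla_i u=\sigma\nabla_k u$. In the $d$-group the first piece dies by trace-freeness $\mathring{R}_{ii}=0$ while the second collapses via the eigenvector relation to $-d\,\sigma^2|\nabla u|^2$; in the $e$-group the second piece again vanishes by $\mathring{R}_{ii}=0$ and the first gives $e\,\sigma|\nabla u|^2$. Using $\mathring{Ric}^2(\nabla u,\nabla u)=\sigma^2|\nabla u|^2$, noting $c+d=\frac{2m+n-2}{n-2}$, and computing $e\sigma=\frac{(n(n-1)\lambda-(m+n-1)R)^2}{n^2(n-1)(m-1)}$, I recover exactly the first displayed expression.

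For the second equality the decisive step is to invoke \eqref{ritdu} once more to turn the $d$-group into pure $g$-type terms, so that $T_{ijk}=c\,X_{ijk}+f\,Y_{ijk}$, where $X_{ijk}=\mathring{R}_{ik}\nabla_j u-\mathring{R}_{jk}\nabla_i u$, $Y_{ijk}=\nabla_i u\,g_{jk}-\nabla_j u\,g_{ik}$ and $f=e-d\sigma$. A short computation of the bracket $\frac{1}{n-1}-\frac{m}{(n-2)(m-1)}=\frac{-(m+n-2)}{(n-1)(n-2)(m-1)}$ gives the crucial relation $f=-\frac{c\,\sigma}{n-1}$. I then expand $|T|^2=c^2|X|^2+2cf\langle X,Y\rangle+f^2|Y|^2$ using the elementary identities $|X|^2=2(|\mathring{Ric}|^2-\sigma^2)|\nabla u|^2$, $\langle X,Y\rangle=2\sigma|\nabla u|^2$ and $|Y|^2=2(n-1)|\nabla u|^2$. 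Substituting $f=-\frac{c\sigma}{n-1}$ makes the three $\sigma^2$-contributions combine into $-2c^2\sigma^2\frac{n}{n-1}|\nabla u|^2$, and comparing with $2c$ times the first-equality value shows $|T|^2=2c\big(\mathring{R}_{ik}T_{ijk}\nabla_j u\big)$, i.e. $\mathring{R}_{ik}T_{ijk}\nabla_j u=\frac{n-2}{2(m+n-2)}|T|^2$.

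The main obstacle is entirely bookkeeping rather than conceptual: one must correctly distinguish the contractions that vanish by $\mathring{R}_{ii}=0$ from those that collapse to $\sigma$ through \eqref{ritdu}, and then verify the single algebraic identity $f=-\frac{c\sigma}{n-1}$ (equivalently $e=d\sigma-\frac{c\sigma}{n-1}$) on which the agreement of the two expressions hinges. No analytic input beyond constant scalar curvature is required; the whole lemma is a formal consequence of the definition \eqref{tensort} and the eigenvector identity \eqref{ritdu}. I would also remark that the computation is insensitive to whether $\sigma=0$, since everything is polynomial in $\sigma$, so no separate degenerate case arises.
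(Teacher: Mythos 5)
Your proof is correct, and its checkable details all verify: the coefficient computation $\frac{1}{n-1}-\frac{m}{(n-2)(m-1)}=\frac{-(m+n-2)}{(n-1)(n-2)(m-1)}$, hence $f=-\frac{c\sigma}{n-1}$, and the norms $|X|^2=2(|\mathring{Ric}|^2-\sigma^2)|\nabla u|^2$, $\langle X,Y\rangle=2\sigma|\nabla u|^2$, $|Y|^2=2(n-1)|\nabla u|^2$ are all right, and they do combine to $|T|^2=2c^2\bigl(|\mathring{Ric}|^2-\tfrac{n}{n-1}\sigma^2\bigr)|\nabla u|^2=2c\,\mathring{R}_{ik}T_{ijk}\nabla_j u$. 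For the first equality your route coincides with the paper's: contract \eqref{tensort} against $\mathring{R}_{ik}\nabla_j u$, kill two terms by $\mathring{R}_{ii}=0$, and collapse the rest via the eigenvector identity \eqref{ritdu}. For the second equality, however, you take a genuinely different path. The paper exploits the algebraic symmetries of $T$ (skew in the first two indices, trace-free in any pair): it writes $\mathring{R}_{ik}T_{ijk}\nabla_j u=\frac{1}{2}T_{ijk}(\mathring{R}_{ik}\nabla_j u-\mathring{R}_{jk}\nabla_i u)$ and then observes that contracting $T$ against the remaining $g$-type blocks of \eqref{tensort} vanishes by trace-freeness, so $\frac{n-2}{2(m+n-2)}|T|^2$ drops out in one line with no coefficient identities needed. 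You instead substitute \eqref{ritdu} into $T$ itself to reduce it to $cX+fY$ and compute $|T|^2$ by brute force, which is longer and hinges on the single identity $f=-\frac{c\sigma}{n-1}$, but buys something the paper only extracts later: your intermediate formula $|T|^2=\frac{2(m+n-2)^2}{(n-2)^2}\bigl(|\mathring{Ric}|^2-\frac{n}{n-1}\sigma^2\bigr)|\nabla u|^2$ is exactly the diagonalized expression the paper rederives in the proof of Corollary \ref{coro-tensort}, so your argument proves that corollary's key computation en route. One small caveat, shared with the paper's own statement: both proofs use \eqref{ritdu} and have $m-1$ in denominators, so $m>1$ is tacitly assumed; and your closing remark that the argument is insensitive to $\sigma=0$ is accurate, since everything is polynomial in $\sigma$.
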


\begin{proof} 
  By using that the scalar curvature $R$ is constant and Eq. \eqref{tensort}, one obtains that 
\begin{eqnarray*}
    \mathring{R}_{ik}T_{ijk}&=&\frac{m+n-2}{n-2}(|\mathring{Ric}|^2 \nabla_j u-\mathring{R}_{ik}\mathring{R}_{jk}\nabla_i u)-\frac{m}{n-2}\mathring{R}_{ik}\mathring{R}_{il}\nabla_l u g_{jk}\\
    & &+\frac{n(n-1)\lambda-(m+n-1)R}{n(n-1)}\mathring{R}_{ik}\nabla_i u g_{jk}\\
    &=&\frac{m+n-2}{n-2}(|\mathring{Ric}|^2 \nabla_j u-\mathring{R}_{ik}\mathring{R}_{jk}\nabla_i u)-\frac{m}{n-2}\mathring{R}_{ij}\mathring{R}_{il}\nabla_l u\\
    & &+\frac{n(n-1)\lambda-(m+n-1)R}{n(n-1)}\mathring{R}_{ij}\nabla_i u.
\end{eqnarray*} Applying this for $\nabla_j u,$ we see that
\begin{eqnarray*}
    \mathring{R}_{ik}T_{ijk}\nabla_j u&=&\frac{m+n-2}{n-2}|\mathring{Ric}|^2|\nabla u|^2-\frac{m+n-2}{n-2}\nabla_j u\mathring{R}_{ik}\mathring{R}_{jk}\nabla_i u\\
    & &-\frac{m}{n-2}\nabla_j u\mathring{R}_{ij}\mathring{R}_{il}\nabla_l u+\frac{n(n-1)\lambda-(m+n-1)R}{n(n-1)}\mathring{Ric}(\nabla u,\nabla u)\\
    &=&\frac{m+n-2}{n-2}|\mathring{Ric}|^2|\nabla u|^2-\frac{2m+n-2}{n-2}\mathring{Ric}^2(\nabla u,\nabla u)\\
    & &+\frac{n(n-1)\lambda-(m+n-1)R}{n(n-1)}\mathring{Ric}(\nabla u,\nabla u).
\end{eqnarray*} So, it suffices to use \eqref{ritdu} in the last term of the above equality in order to infer the first equality in \eqref{righ}. 

Finally, since $T$ is trace-free in any two indices and skew-symmetric in their first two indices, we get

\begin{eqnarray*}
    \mathring{R}_{ik}T_{ijk}\nabla_j u&=&\frac{1}{2}(\mathring{R}_{ik}T_{ijk}\nabla_j u-\mathring{R}_{ik}T_{jik}\nabla_j u)\\
    &=&\frac{1}{2}T_{ijk}(\mathring{R}_{ik}\nabla_j u-\mathring{R}_{jk}\nabla_i u)\\
    &=&\frac{n-2}{2(m+n-2)}|T|^2,
\end{eqnarray*} where in the last equality we have used \eqref{tensort}. This finishes the proof of the lemma. 
\end{proof}

As a consequence of Lemma \ref{l2}, by considering the aforementioned orthonormal frame $\{e_{i}\}_{i=1}^{n}$ with $e_1=-\frac{\nabla u}{|\nabla u|}$ so that $\mathring{Ric}(e_i)=\xi_i e_i,$ we obtain the following result.

\begin{corollary}
\label{coro-tensort}
Let $(M^n,\,g,\,u,\,\lambda)$ be an $m$-quasi-Einstein manifold with constant scalar curvature and $m>1.$ Then $T$ is identically zero if and only if the Ricci tensor has at most two different eigenvalues, one of them has multiplicity at least $n-1$ and its eigenspace corresponds to the orthogonal complement of $\nabla u.$
\end{corollary}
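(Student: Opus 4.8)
The plan is to read off both directions of the equivalence from the two expressions for $\mathring{R}_{ik}T_{ijk}\nabla_j u$ already furnished by Lemma \ref{l2}, after rewriting the first of them purely in terms of the eigenvalues $\xi_i$ of $\mathring{Ric}$ in the adapted frame $\{e_i\}$ with $e_1=-\nabla u/|\nabla u|$. First I would record that, by \eqref{ritdu}, the eigenvalue associated with $e_1$ is $\xi_1=\frac{n(n-1)\lambda-(m+n-1)R}{n(m-1)}$, so that the constant in the last term of \eqref{righ} equals $n(m-1)\xi_1$. Since the frame diagonalizes $\mathring{Ric}$, one has $|\mathring{Ric}|^2=\sum_i\xi_i^2$, while $\mathring{Ric}(\nabla u,\nabla u)=\xi_1|\nabla u|^2$ and $\mathring{Ric}^2(\nabla u,\nabla u)=\xi_1^2|\nabla u|^2$.

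Substituting these into the first equality of \eqref{righ} and collecting the $\xi_1^2$ terms, I expect the right-hand side to collapse cleanly to
\begin{equation*}
\frac{n-2}{2(m+n-2)}|T|^2=\frac{m+n-2}{n-2}\left(\sum_{i=2}^{n}\xi_i^2-\frac{\xi_1^2}{n-1}\right)|\nabla u|^2,
\end{equation*}
the coefficient $-\frac{m+n-2}{(n-2)(n-1)}$ in front of $\xi_1^2$ being produced by the cancellation among the three $\xi_1^2$ contributions. The decisive point is then elementary: since $\mathring{Ric}$ is trace-free, $\sum_{i=2}^{n}\xi_i=-\xi_1$, and the Cauchy--Schwarz inequality gives $\sum_{i=2}^{n}\xi_i^2\geq\frac{1}{n-1}\big(\sum_{i=2}^{n}\xi_i\big)^2=\frac{\xi_1^2}{n-1}$, with equality precisely when $\xi_2=\cdots=\xi_n$.

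Thus the bracket is nonnegative and vanishes (on the dense open set where $|\nabla u|\neq 0$, hence everywhere by continuity, recalling that the critical set of the analytic function $u$ has measure zero as noted in Section \ref{preliminaries}) if and only if the $n-1$ eigenvalues transverse to $\nabla u$ coincide. In that case $\xi_2=\cdots=\xi_n=-\xi_1/(n-1)$, so $\mathring{Ric}$, and therefore $Ric$, has at most two distinct eigenvalues, the one of multiplicity at least $n-1$ having the orthogonal complement of $\nabla u$ as its eigenspace (the Einstein case $\xi_1=0$ being subsumed under ``at most two''). This is exactly the asserted characterization, and the converse is immediate by retracing the equality case. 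The computation is routine once set up, so I do not anticipate a genuine obstacle; the only points requiring care are the algebraic cancellation yielding the stated $\xi_1^2$ coefficient and the passage from ``vanishing where $\nabla u\neq 0$'' to ``vanishing on all of $M$'', which rests on the analyticity of $u$.
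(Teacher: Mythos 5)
Your proposal is correct and is essentially identical to the paper's own proof: both substitute the eigenvalues $\xi_i$ of $\mathring{Ric}$ (with $\xi_1$ given by \eqref{ritdu}) into the first equality of \eqref{righ}, and your claimed collapse to $\frac{m+n-2}{n-2}\bigl(\sum_{i=2}^{n}\xi_i^2-\frac{1}{n-1}\xi_1^2\bigr)|\nabla u|^2$, including the coefficient $-\frac{m+n-2}{(n-1)(n-2)}$ of $\xi_1^2$, checks out and matches the paper's computation exactly. The finish via trace-freeness, Cauchy--Schwarz with equality iff $\xi_2=\cdots=\xi_n$, and analyticity of $u$ to pass from the regular set to all of $M$ is also the paper's argument.
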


\begin{proof}
Taking into account that $\xi_1=\frac{n(n-1)\lambda-(m+n-1)R}{n(m-1)},$ one deduces from \eqref{righ} that 
\begin{eqnarray*}
    \frac{n-2}{2(m+n-2)}|T|^2&=&\left[\frac{m+n-2}{n-2}\sum_{i=1}^{n}\xi_{i}^{2}+\frac{m-1}{n-1}\xi_{1}^{2}\right]|\nabla u|^2-\frac{2m+n-2}{n-2}\xi_{1}^{2}|\nabla u|^2\\
    &=&\frac{m+n-2}{n-2}\left[\sum_{i=2}^{n}\xi_{i}^{2}-\frac{1}{n-1}\xi_{1}^{2}\right]|\nabla u|^2
\end{eqnarray*} on the regular points of the potential function $u.$ Moreover, since $Tr(\mathring{Ric})=\sum_{i=1}^{n}\xi_i=0,$ we infer
\begin{eqnarray*}
    \frac{n-2}{2(m+n-2)}|T|^2&=&\frac{m+n-2}{n-2}\left[\sum_{i=2}^{n}\xi_{i}^{2}-\frac{1}{n-1}\left(\sum_{i=2}^{n}\xi_i\right)^2\right]|\nabla u|^2.
\end{eqnarray*} By the Cauchy-Schwarz inequality, we conclude that $T\equiv 0$ if and only if the Ricci tensor has at most two different eigenvalues with $\lambda_2=\ldots=\lambda_n$ at regular points of $u$, for eigenvalues of the Ricci given by $\lambda_i=\xi_i+\frac{R}{n}$. To conclude the proof, it suffices to recall that $u$ is real analytical in harmonic coordinates and consequently, the set of critical points of $u$ has zero measure in $M.$
\end{proof}

In the remainder of this section, we establish several key lemmas, valid in arbitrary dimension $n\geq 3,$ which will play a crucial role in the proof of Theorem \ref{theodim4}. Our goal is to derive an explicit expression for $u\Delta \left(Tr(P^3)\right)$ (see Lemma \ref{lemKA3}). To this end, we first compute a formula for $u\left(\Delta Ric\right).$
 
 \begin{lemma}
 \label{lemKA1}
Let $(M^n,\,g)$ be an $n$-dimensional Riemannian manifold satisfying (\ref{mqE}). Then we have:
\begin{eqnarray*}
u\left(\Delta R_{ik}\right)&=& \nabla_{i}R_{sk}\nabla_{s}u + m\nabla_{k}R_{is}\nabla_{s}u+\frac{u}{2}\nabla_{i}\nabla_{k}R+\frac{1}{2}\nabla_{i}u\nabla_{k}R\nonumber\\&&+ \frac{(m+1)}{m}uR_{is}R_{sk}+2uR_{jiks}R_{js}-(m+2)\nabla_{s}R_{ik}\nabla_{s}u\nonumber\\&&-\frac{u}{m}\left(R-(m+n-2)\lambda\right)R_{ik}+\frac{\lambda u}{m}\left(R-(n-1)\lambda\right)g_{ik}.
\end{eqnarray*}
 \end{lemma}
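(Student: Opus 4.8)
The plan is to differentiate assertion (4) of Lemma \ref{lemmafund} and contract, turning the resulting second derivatives of the Ricci tensor into a Laplacian by means of the Ricci commutation formula and the twice-contracted second Bianchi identity, and then to eliminate every Hessian and Laplacian of $u$ using the fundamental equation \eqref{mqE} and its trace \eqref{eq-laplacian}. Concretely, I would start from
\begin{equation*}
u(\nabla_i R_{jk}-\nabla_j R_{ik})=mR_{ijkl}\nabla_l u+\lambda(\nabla_i u\,g_{jk}-\nabla_j u\,g_{ik})-(\nabla_i u\,R_{jk}-\nabla_j u\,R_{ik}),
\end{equation*}
apply $\nabla_j$ and sum over $j$. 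On the left the product rule produces $\nabla_j u(\nabla_i R_{jk}-\nabla_j R_{ik})$, which already yields $\nabla_i R_{sk}\nabla_s u-\nabla_s R_{ik}\nabla_s u$, together with $u(\nabla_j\nabla_i R_{jk}-\Delta R_{ik})$. Solving for $u\Delta R_{ik}$ reduces the proof to evaluating $u\nabla_j\nabla_i R_{jk}$ and the divergence of the right-hand side.

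For $u\nabla_j\nabla_i R_{jk}$ I would commute derivatives, $\nabla_j\nabla_i R_{jk}=\nabla_i\nabla_j R_{jk}+[\nabla_j,\nabla_i]R_{jk}$. The contracted second Bianchi identity $\nabla_j R_{jk}=\tfrac12\nabla_k R$ turns the first piece into $\tfrac{u}{2}\nabla_i\nabla_k R$, while the Ricci identity (in the curvature convention fixed in Subsection \ref{Sec2}) expresses the commutator, after the contraction $\sum_j R_{jij}{}^{\,s}=R_{is}$, as a sum of a $Ric^2$ term and a $Rm\cdot Ric$ term. Differentiating the right-hand side, I would invoke the contracted second Bianchi identity once more, in the form $\nabla_j R_{ijkl}=\nabla_l R_{ik}-\nabla_k R_{il}$, to treat $m(\nabla_j R_{ijkl})\nabla_l u$ (this produces $m\nabla_k R_{is}\nabla_s u$ and $-m\nabla_s R_{ik}\nabla_s u$), and I would substitute $\nabla_j\nabla_l u=\tfrac{u}{m}(R_{jl}-\lambda g_{jl})$ for every Hessian that appears. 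In particular $mR_{ijkl}\nabla_j\nabla_l u=uR_{ijkl}R_{jl}-\lambda u R_{ik}$ supplies a second copy of the $Rm\cdot Ric$ term, while differentiating the lower-order terms and using \eqref{eq-laplacian} for $\Delta u$ supplies the $\tfrac1m$-weighted $Ric^2$ term and the term $\tfrac12\nabla_i u\nabla_k R$.

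The final step is bookkeeping. I would collect the several contributions to each monomial: the two $Ric^2$ contributions combine as $\tfrac1m+1=\tfrac{m+1}{m}$, giving $\tfrac{m+1}{m}uR_{is}R_{sk}$; the two $Rm\cdot Ric$ contributions add to $2uR_{jiks}R_{js}$; the three contributions with coefficients $-1,-m,-1$ to $\nabla_s R_{ik}\nabla_s u$ add to $-(m+2)\nabla_s R_{ik}\nabla_s u$; and the terms linear in $R_{ik}$ and in $g_{ik}$ assemble into $-\tfrac{u}{m}(R-(m+n-2)\lambda)R_{ik}$ and $\tfrac{\lambda u}{m}(R-(n-1)\lambda)g_{ik}$ respectively.

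I expect the main obstacle to lie precisely in the commutation and this assembly: keeping every curvature contraction and every sign consistent with the paper's convention $\mathrm{Rm}(X,Y)=\nabla^2_{Y,X}-\nabla^2_{X,Y}$ and $R_{ij}=\mathrm{tr}\,Rm(e_i,\cdot,e_j,\cdot)$. Indeed the positive signs in front of $\tfrac{m+1}{m}uR_{is}R_{sk}$ and $2uR_{jiks}R_{js}$ are exactly what a correct application of the Ricci identity in this convention must deliver, and the cancellations among the $\lambda$-linear terms that produce the coefficient $(m+n-2)$ are delicate enough that a single misplaced contraction would spoil them.
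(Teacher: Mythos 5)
Your proposal is correct and follows essentially the same route as the paper's proof: both take the divergence of identity (4) of Lemma \ref{lemmafund}, use the twice-contracted and once-contracted second Bianchi identities together with the Ricci commutation identity $\nabla_{j}\nabla_{i}R_{jk}=\nabla_{i}\nabla_{j}R_{jk}+R_{jijs}R_{sk}+R_{jiks}R_{js}$, substitute $\nabla^{2}u=\frac{u}{m}(Ric-\lambda g)$ and its trace for every Hessian and Laplacian of $u$, and collect terms exactly as you describe (the coefficients $\frac{m+1}{m}$, $2$, and $-(m+2)$ arise from the same contributions you list). The only difference is organizational --- the paper first solves the identity for $u\nabla_{j}R_{ik}$ and writes $u\Delta R_{ik}=\nabla_{j}(u\nabla_{j}R_{ik})-\nabla_{j}u\,\nabla_{j}R_{ik}$, while you differentiate the antisymmetrized identity directly --- which is algebraically the same computation.
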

 \begin{proof}
Firstly, it follows from assertion (4) of Lemma \ref{lemmafund} that 
 \begin{equation*}
 u\nabla_{j}R_{ik}=u\nabla_{i}R_{jk}+mR_{jikl}\nabla_{l}u + \lambda\left(\nabla_{j}u g_{ik}-\nabla_{i}u g_{jk}\right)-\left(\nabla_{j}u R_{ik}-\nabla_{i}u R_{jk}\right).
 \end{equation*} This jointly with the fact that $\nabla_{j}\left(u\nabla_{j}R_{ik}\right)=\nabla_{j}u \nabla_{j}R_{ik}+u\Delta R_{ik}$ gives
 
 \begin{eqnarray*}
 u\Delta R_{ik} &=& \nabla_{j}\left(u\nabla_{j}R_{ik}\right)-\nabla_{j}u \nabla_{j}R_{ik}\nonumber\\&=& \nabla_{j}\left(u\nabla_{i}R_{jk}+mR_{jikl}\nabla_{l}u + \lambda\left(\nabla_{j}u g_{ik}-\nabla_{i}u g_{jk}\right)-\left(\nabla_{j}u R_{ik}-\nabla_{i}u R_{jk}\right)\right)\nonumber\\&&-\nabla_{j}u \nabla_{j}R_{ik}\nonumber\\&=& 
 \nabla_{j}u \nabla_{i}R_{jk}+u\nabla_{j}\nabla_{i}R_{jk}+m\nabla_{j}R_{jikl}\nabla_{l}u+mR_{jikl}\nabla_{j}\nabla_{l}u+\lambda \Delta u g_{ik}\nonumber\\&&-\lambda \nabla_{k}\nabla_{i}u-\Delta u R_{ik}-\nabla_{j}u\nabla_{j}R_{ik} +\nabla_{j}\nabla_{i}u R_{jk}+\nabla_{i}u \nabla_{j}R_{jk}-\nabla_{j}u\nabla_{j}R_{ik}.
 \end{eqnarray*} Next, by using the twice contracted second Bianchi identity $(\nabla_{j}R_{jk}=\frac{1}{2}\nabla_{k}R)$ and the first contracted second Bianchi identity
  $(\nabla_{j}R_{jikl}=\nabla_{k}R_{il}-\nabla_{l}R_{ik}),$ one sees that
  
  \begin{eqnarray}
  \label{lkm19}
   u\Delta R_{ik} &=& -\nabla_{j}u\nabla_{j}R_{ik} + \nabla_{j}u \nabla_{i}R_{jk} + u \nabla_{j}\nabla_{i}R_{jk}+m\left(\nabla_{k}R_{il}-\nabla_{l}R_{ik}\right)\nabla_{l}u \nonumber\\&&+ mR_{jikl}\nabla_{j}\nabla_{l}u+\lambda \Delta u g_{ik}-\lambda \nabla_{k}\nabla_{i}u-\Delta u R_{ik} -\nabla_{j}u\nabla_{j}R_{ik}\nonumber\\&&+\nabla_{j}\nabla_{i}u R_{jk}+\frac{1}{2}\nabla_{i}u\nabla_{k}R\nonumber\\&=&  -\nabla_{j}u\nabla_{j}R_{ik} + \nabla_{j}u \nabla_{i}R_{jk} +\frac{u}{2}\nabla_{i}\nabla_{k}R + uR_{is}R_{sk}+uR_{jiks}R_{js}\nonumber\\&&+m\left(\nabla_{k}R_{il}-\nabla_{l}R_{ik}\right)\nabla_{l}u + mR_{jikl}\nabla_{j}\nabla_{l}u+\lambda \Delta u g_{ik}-\lambda \nabla_{k}\nabla_{i} u\nonumber\\&&- \Delta u R_{ik} -\nabla_{j}u\nabla_{j}R_{ik} + \nabla_{j}\nabla_{i}u R_{jk} + \frac{1}{2}\nabla_{i}u\nabla_{k}R,
  \end{eqnarray} where  in the last equality we have used the Ricci identity, i.e., $$\nabla_{j}\nabla_{i}R_{jk}=\nabla_{i}\nabla_{j}R_{jk}+R_{jijs}R_{sk}+R_{jiks}R_{js}.$$ Plugging  (\ref{mqE}) and (\ref{eq-laplacian}) into (\ref{lkm19}) yields
  
  \begin{eqnarray*}
   u\Delta R_{ik} &=& -\nabla_{j}u\nabla_{j}R_{ik} + \nabla_{j}u \nabla_{i}R_{jk} +\frac{u}{2}\nabla_{i}\nabla_{k}R + uR_{is}R_{sk}+uR_{jiks}R_{js}\nonumber\\&&+m\left(\nabla_{k}R_{il}-\nabla_{l}R_{ik}\right)\nabla_{l}u + u R_{jikl}\left(R_{jl}-\lambda g_{jl}\right) + \frac{\lambda u}{m}\left(R-\lambda n\right)g_{ik}\nonumber\\&&-\frac{\lambda u}{m}\left(R_{ki}-\lambda g_{ki}\right)-\frac{u}{m}\left(R-\lambda n\right)R_{ik}-\nabla_{j}u\nabla_{j}R_{ik} \nonumber\\&&+ \frac{u}{m}\left(R_{ji}-\lambda g_{ji}\right)R_{jk}+\frac{1}{2}\nabla_{i}u\nabla_{k}R\nonumber\\&=& \nabla_{i}R_{jk}\nabla_{j}u + m\nabla_{k}R_{il}\nabla_{l}u + \frac{u}{2}\nabla_{i}\nabla_{k}R + \frac{1}{2}\nabla_{i}u\nabla_{k}R + \frac{(m+1)}{m}uR_{is}R_{sk}\nonumber\\&&+2 uR_{jiks}R_{js}-(m+2)\nabla_{j}R_{ik}\nabla_{j}u + \left(\lambda u-\frac{\lambda u}{m} -\frac{u}{m}\left(R-\lambda n\right)-\frac{\lambda u}{m}\right)R_{ik}\nonumber\\&& +\frac{\lambda u}{m}\left(R-(n-1)\lambda\right)g_{ik}.
  \end{eqnarray*} Rearranging terms, one concludes that
  
  \begin{eqnarray*}
     u\Delta R_{ik} &=& \nabla_{i}R_{sk}\nabla_{s}u +m\nabla_{k}R_{is}\nabla_{s}u + \frac{u}{2}\nabla_{i}\nabla_{k}R + \frac{1}{2}\nabla_{i}u\nabla_{k}R \nonumber\\&&+\frac{(m+1)}{m}u R_{is}R_{sk} +2u R_{jiks}R_{js} - (m+2)\nabla_{s}R_{ik} \nabla_{s}u\nonumber\\&& - \frac{u}{m}\left(R-(m+n-2)\lambda \right)R_{ik} +\frac{\lambda u}{m}\left(R-(n-1)\lambda \right)g_{ik},
  \end{eqnarray*} as we wanted to prove. 
 \end{proof}

As an application of Lemma \ref{lemKA1}, we are able to obtain an useful expression for $\Delta (Ric^3)_{ik}=\Delta\left(R_{ij}R_{jl}R_{lk}\right).$ 

 \begin{lemma}
 \label{lemKA2}
Let $(M^n,\,g)$ be an $n$-dimensional Riemannian manifold satisfying (\ref{mqE}). Then we have:
\begin{eqnarray*}
u\Delta (Ric^3)_{ik}&+&(m+2)\nabla_{s}u\nabla_{s}(Ric^3)_{ik}\nonumber\\&=& \nabla_{i}R_{sj}\nabla_{s}u R_{jl}R_{lk} + \nabla_{j}R_{sl}\nabla_{s}u R_{ij}R_{lk} + \nabla_{l}R_{sk}\nabla_{s}u R_{ij}R_{jl}\nonumber\\&& 
+2u\left(\nabla_{s}R_{ij}\nabla_{s}R_{jl} R_{lk}+\nabla_{s}R_{ij} R_{jl}\nabla_{s}R_{lk}+R_{ij}\nabla_{s}R_{jl}\nabla_{s}R_{lk}\right)\nonumber\\&& 
+m\left(\nabla_{j}R_{is}\nabla_{s}u R_{jl}R_{lk} + \nabla_{l}R_{js}\nabla_{s}u R_{ij}R_{lk} +\nabla_{k}R_{ls}\nabla_{s}u R_{ij}R_{jl}\right)\nonumber\\&&+\frac{u}{2}\left(\nabla_{i}\nabla_{j}R R_{jl}R_{lk}+\nabla_{j}\nabla_{l}R R_{ij}R_{lk}+\nabla_{l}\nabla_{k}R R_{ij}R_{jl}\right)\nonumber\\&& +\frac{1}{2}\left(\nabla_{i}u\nabla_{j}R R_{jl}R_{lk}+\nabla_{j}u \nabla_{l}R R_{ij}R_{lk}+\nabla_{l}u \nabla_{k}R R_{ij}R_{jl}\right)\nonumber\\&& + \frac{(m+1)}{m} u\left(R_{is}R_{sj}R_{jl}R_{lk}+R_{js}R_{sl}R_{ij}R_{lk}+R_{ls}R_{sk}R_{ij}R_{jl}\right)\nonumber\\&&+2 u \left(R_{dijs}R_{ds}R_{jl}R_{lk}+R_{djls}R_{ds}R_{ij}R_{lk}+ R_{dlks}R_{ds}R_{ij}R_{jl}\right)\nonumber\\&& -3\frac{u}{m}\left(R-(m+n-2)\lambda\right)(Ric^3)_{ik} +3\frac{\lambda u}{m}\left(R-(n-1)\lambda\right)R_{il}R_{lk}. 
\end{eqnarray*}
\end{lemma}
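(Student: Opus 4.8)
The plan is to regard $(Ric^3)_{ik}=R_{ij}R_{jl}R_{lk}$, with $j$ and $l$ summed, as a sum of products of three scalar components of the Ricci tensor, and to apply the Leibniz rule for the Laplacian of a triple product. Recall that for functions $f,g,h$ one has $\Delta(fgh)=(\Delta f)gh+f(\Delta g)h+fg(\Delta h)+2\nabla_s f\nabla_s g\,h+2\nabla_s f\,g\,\nabla_s h+2f\nabla_s g\nabla_s h$. Applying this componentwise gives
\begin{align*}
\Delta(Ric^3)_{ik}&=(\Delta R_{ij})R_{jl}R_{lk}+R_{ij}(\Delta R_{jl})R_{lk}+R_{ij}R_{jl}(\Delta R_{lk})\\
&\quad+2\big(\nabla_s R_{ij}\nabla_s R_{jl}R_{lk}+\nabla_s R_{ij}R_{jl}\nabla_s R_{lk}+R_{ij}\nabla_s R_{jl}\nabla_s R_{lk}\big).
\end{align*}
Multiplying through by $u$ immediately produces the three gradient--gradient terms displayed on the right-hand side of the statement and reduces everything to evaluating the three factors $u\Delta R_{ij}$, $u\Delta R_{jl}$ and $u\Delta R_{lk}$.

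The second step is to insert Lemma \ref{lemKA1} for each of these three factors, contracting the two surviving Ricci tensors in each case. The \emph{decisive point} is the single term $-(m+2)\nabla_s R_{ik}\nabla_s u$ of Lemma \ref{lemKA1}: under the three substitutions it contributes
\begin{align*}
&-(m+2)\big(\nabla_s R_{ij}\nabla_s u\,R_{jl}R_{lk}+R_{ij}\nabla_s R_{jl}\nabla_s u\,R_{lk}+R_{ij}R_{jl}\nabla_s R_{lk}\nabla_s u\big)\\
&\qquad=-(m+2)\nabla_s u\,\nabla_s(Ric^3)_{ik},
\end{align*}
because the ordinary product rule expands $\nabla_s(Ric^3)_{ik}$ into exactly those three terms. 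Transposing this quantity to the left-hand side is precisely what creates the combination $u\Delta(Ric^3)_{ik}+(m+2)\nabla_s u\,\nabla_s(Ric^3)_{ik}$ appearing in the lemma.

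It then remains to distribute the remaining terms of Lemma \ref{lemKA1} over the three factors, relabeling the free indices by $(i,k)\mapsto(i,j)$ for $R_{ij}$, $(i,k)\mapsto(j,l)$ for $R_{jl}$, and $(i,k)\mapsto(l,k)$ for $R_{lk}$. The first-order terms $\nabla_i R_{sk}\nabla_s u$ and $m\nabla_k R_{is}\nabla_s u$ reproduce the lines of $\nabla_i R_{sj}\nabla_s u\,R_{jl}R_{lk}$- and $m\nabla_j R_{is}\nabla_s u\,R_{jl}R_{lk}$-type terms; the terms $\tfrac{u}{2}\nabla_i\nabla_k R$ and $\tfrac12\nabla_i u\nabla_k R$ give the two Hessian-of-$R$ lines; the quadratic terms $\tfrac{m+1}{m}uR_{is}R_{sk}$ and $2uR_{jiks}R_{js}$ give the quartic Ricci line and the Ricci--Riemann line (with the internal summation index of the Riemann term renamed to $d$ to avoid collision with the contracted index $j$); and the two zeroth-order terms collapse, via the contractions $g_{ij}R_{jl}R_{lk}=R_{il}R_{lk}$, $R_{ij}g_{jl}R_{lk}=R_{il}R_{lk}$ and $R_{ij}R_{jl}g_{lk}=R_{il}R_{lk}$, to $-3\tfrac{u}{m}(R-(m+n-2)\lambda)(Ric^3)_{ik}$ and $3\tfrac{\lambda u}{m}(R-(n-1)\lambda)R_{il}R_{lk}$, the factor $3$ reflecting that each of the three substitutions contributes one copy.

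The only real obstacle is bookkeeping: one must carry the three index relabelings consistently across the three substitutions and verify that every $g$-contraction against the surviving Ricci factors collapses to $(Ric^2)$ or $(Ric^3)$ as claimed. There is no analytic content here—the identity is a purely algebraic consequence of Lemma \ref{lemKA1} together with the Leibniz rule for the Laplacian of a product.
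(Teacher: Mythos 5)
Your proposal is correct and coincides with the paper's own proof: the paper likewise expands $u\Delta(R_{ij}R_{jl}R_{lk})$ by the Leibniz rule, substitutes Lemma~\ref{lemKA1} three times with exactly the index relabelings you describe, and absorbs the three $-(m+2)\nabla_s R\,\nabla_s u$ contributions into $-(m+2)\nabla_s u\,\nabla_s(Ric^3)_{ik}$ on the left-hand side. Your observations about the factor $3$ from the $g$-contractions and the renaming of the contracted Riemann index to $d$ match the paper's computation, so there is nothing to add.
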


\begin{proof}
One easily verifies that

\begin{eqnarray}
\label{eqkp1za}
u\Delta (Ric^3)_{ik}&=& u\Delta (R_{ij}R_{jl}R_{lk})\nonumber\\&=& (u\Delta R_{ij}) R_{jl}R_{lk} +R_{ij}(u\Delta R_{jl}) R_{lk} + R_{ij}R_{jl}(u\Delta R_{lk})\nonumber\\&&
+2u\left(\nabla_{s}R_{ij}\nabla_{s}R_{jl} R_{lk}+\nabla_{s}R_{ij} R_{jl}\nabla_{s}R_{lk}+R_{ij}\nabla_{s}R_{jl}\nabla_{s}R_{lk}\right).
\end{eqnarray} Next, it follows from Lemma \ref{lemKA1} that

\begin{eqnarray}
\label{kl1aq2A}
u\left(\Delta R_{ij}\right) R_{jl}R_{lk}&=& \nabla_{i}R_{sj}\nabla_{s}u R_{jl}R_{lk}+m\nabla_{j}R_{is}\nabla_{s}u R_{jl}R_{lk}+\frac{u}{2}\nabla_{i}\nabla_{j}R R_{jl}R_{lk}\nonumber\\&& + \frac{1}{2}\nabla_{i}u \nabla_{j}R R_{jl}R_{lk}+ \frac{(m+1)}{m}u R_{is}R_{sj}R_{jl}R_{lk}+ 2u R_{dijs}R_{ds}R_{jl}R_{lk}\nonumber\\&&-(m+2)\nabla_{s}R_{ij}\nabla_{s}u R_{jl}R_{lk} -\frac{u}{m}\left(R-(m+n-2)\lambda\right)R_{ij}R_{jl}R_{lk}\nonumber\\&&+\frac{\lambda u}{m}\left(R-(n-1)\lambda\right)R_{il}R_{lk},
\end{eqnarray}

\begin{eqnarray}
\label{kl1aq2B}
R_{ij}\left(u\Delta R_{jl}\right)R_{lk}&=& \nabla_{j}R_{sl}\nabla_{s}u R_{ij}R_{lk}+m\nabla_{l}R_{js}\nabla_{s}u R_{ij}R_{lk}+\frac{u}{2}\nabla_{j}\nabla_{l}R R_{ij}R_{lk}\nonumber\\&& + \frac{1}{2}\nabla_{j}u \nabla_{l}R R_{ij}R_{lk}+ \frac{(m+1)}{m}u R_{js}R_{sl}R_{ij}R_{lk}+ 2u R_{djls}R_{ds}R_{ij}R_{lk}\nonumber\\&&-(m+2)\nabla_{s}R_{jl}\nabla_{s}u R_{ij}R_{lk} -\frac{u}{m}\left(R-(m+n-2)\lambda\right)R_{jl}R_{ij}R_{lk}\nonumber\\&&+\frac{\lambda u}{m}\left(R-(n-1)\lambda\right)R_{il}R_{lk}
\end{eqnarray} and

\begin{eqnarray}
\label{kl1aq2C}
R_{ij}R_{jl}\left(u\Delta R_{lk}\right)&=& \nabla_{l}R_{sk}\nabla_{s}u R_{ij}R_{jl}+m\nabla_{k}R_{ls}\nabla_{s}u R_{ij}R_{jl}+\frac{u}{2}\nabla_{l}\nabla_{k}R R_{ij}R_{jl}\nonumber\\&& + \frac{1}{2}\nabla_{l}u \nabla_{k}R R_{ij}R_{jl}+ \frac{(m+1)}{m}u R_{ls}R_{sk}R_{ij}R_{jl}+ 2u R_{dlks}R_{ds}R_{ij}R_{jl}\nonumber\\&&-(m+2)\nabla_{s}R_{lk}\nabla_{s}u R_{ij}R_{jl} -\frac{u}{m}\left(R-(m+n-2)\lambda\right)R_{ij}R_{jl}R_{lk}\nonumber\\&&+\frac{\lambda u}{m}\left(R-(n-1)\lambda\right)R_{ij}R_{jk}.
\end{eqnarray} Therefore, inserting (\ref{kl1aq2A}), (\ref{kl1aq2B}) and (\ref{kl1aq2C}) into (\ref{eqkp1za}) yields the asserted result.  

\end{proof}

As a consequence of Lemma \ref{lemKA2}, we deduce the following corollary.

\begin{corollary}
 \label{corKA2}
Let $(M^n,\,g)$ be an $n$-dimensional Riemannian manifold satisfying (\ref{mqE}) with constant scalar curvature. Then we have:
\begin{eqnarray}
    u\Delta \left(Tr(Ric^3)\right)&+&(m+2)\nabla_s u \nabla_s (Tr(Ric^3))\nonumber\\&=&3(m+1)\nabla_i R_{sj}R_{jl}R_{il}\nabla_s u+\frac{3(m+1)u}{m}Ric_{ij}^{2}Ric^{2}_{ij}+6uR_{ds}R_{dijs}R_{jl}R_{il}\nonumber\\
    & &-\frac{3u}{m}\left(R-(m+n-2)\lambda\right)Tr(Ric^3)\nonumber\\
    & &+\frac{3\lambda u}{m}\left(R-(n-1)\lambda\right)|Ric|^2+6u\nabla_s R_{ij} \nabla_s R_{jl}R_{il},\nonumber
\end{eqnarray} where $Tr (Ric^3)=R_{ij}R_{jl}R_{li}$ and $Ric^{2}_{ij}=R_{ik}R_{kj}.$

\end{corollary}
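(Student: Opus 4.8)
The plan is to obtain the identity of Corollary \ref{corKA2} simply by \emph{tracing} the tensorial formula of Lemma \ref{lemKA2} over its two free indices, i.e.\ by setting $k=i$ and summing. First I would record that under this contraction the left-hand side becomes $u\Delta\big(Tr(Ric^3)\big)+(m+2)\nabla_s u\,\nabla_s\big(Tr(Ric^3)\big)$: indeed $(Ric^3)_{ii}=Tr(Ric^3)$, and since the metric is parallel both the rough Laplacian $\Delta=\nabla_s\nabla_s$ and the first-order operator $\nabla_s u\,\nabla_s$ commute with the contraction over $i=k$. Next I would invoke the standing hypothesis that $R$ is constant; this annihilates the two lines of Lemma \ref{lemKA2} carrying $\nabla R$ or $\nabla\nabla R$ (the $\tfrac{u}{2}(\nabla_i\nabla_j R\cdots)$ line and the $\tfrac12(\nabla_i u\,\nabla_j R\cdots)$ line), so that precisely the terms absent from the corollary disappear.

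The substance of the argument is then to check that, after setting $k=i$, each surviving group of three cyclic terms collapses into three equal copies. For the first-order group I would use only the symmetry $R_{ij}=R_{ji}$: grouping the two undifferentiated Ricci factors into $(Ric^2)$ and relabelling the summed indices shows
\begin{equation*}
\nabla_i R_{sj}R_{jl}R_{li}=\nabla_j R_{sl}R_{ij}R_{li}=\nabla_l R_{si}R_{ij}R_{jl}=\nabla_i R_{sj}(Ric^2)_{ij},
\end{equation*}
so that the first group contributes $3\,\nabla_i R_{sj}R_{jl}R_{il}\,\nabla_s u$. The very same relabelling applied to the three $m$-terms shows that each of them also equals $\nabla_i R_{sj}R_{jl}R_{il}\,\nabla_s u$, whence the two groups combine into $3(m+1)\nabla_i R_{sj}R_{jl}R_{il}\,\nabla_s u$. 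Identical bookkeeping, grouping Ricci factors as $(Ric^2)_{ij}=R_{is}R_{sj}$, collapses the $\tfrac{m+1}{m}u$-group to $\tfrac{3(m+1)u}{m}Ric^2_{ij}Ric^2_{ij}$ and the $\nabla_s R$-quadratic group to $6u\,\nabla_s R_{ij}\nabla_s R_{jl}R_{il}$; finally the two zeroth-order terms trace directly via $(Ric^3)_{ii}=Tr(Ric^3)$ and $R_{il}R_{li}=|Ric|^2$.

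The one step demanding real care is the curvature group $2u\big(R_{dijs}R_{ds}R_{jl}R_{lk}+\cdots\big)$: because the Riemann tensor is not symmetric in arbitrary index pairs, matching the three contracted terms is less immediate than in the Ricci-only groups. For each term I would again fuse the two undifferentiated Ricci factors into $(Ric^2)$ — e.g.\ the third term, after $k=i$, reads $R_{ds}R_{dlis}(Ric^2)_{il}$ — and then exploit the symmetry of $(Ric^2)$ to rename the pair of summation indices occupying the two middle slots of $R_{d\,\cdot\,\cdot\,s}$, converting every term into the common form $R_{ds}R_{dijs}(Ric^2)_{ij}$. This yields the coefficient $6u\,R_{ds}R_{dijs}R_{jl}R_{il}$, and collecting all the collapsed groups reproduces the asserted identity verbatim. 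I expect this group to be the only place where an index error is likely, precisely because it is the symmetry of $(Ric^2)$ rather than of the Riemann tensor that forces the three terms to coincide.
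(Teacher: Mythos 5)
Your proposal is correct and follows essentially the same route as the paper: the paper likewise specializes Lemma \ref{lemKA2} to constant $R$ (killing the $\nabla R$ and $\nabla\nabla R$ lines), traces over $i=k$, and collapses each cyclic group of three terms into three equal copies, citing precisely the identity $\nabla_i R_{sj}R_{jl}R_{li}=\nabla_j R_{sl}R_{ij}R_{il}=\nabla_l R_{is}R_{ij}R_{jl}$ that you derive by relabelling. Your index bookkeeping checks out throughout, including the delicate curvature group, where — exactly as you note — it is the symmetry of $(Ric^2)_{ij}$ under relabelling of the middle slots, not any Riemann symmetry, that makes the three contracted terms coincide as $R_{ds}R_{dijs}(Ric^2)_{ij}$.
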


\begin{proof}
By using that $M^n$ has constant scalar curvature into Lemma \ref{lemKA2}, one deduces that
\begin{eqnarray*}
    u\Delta Ric^{3}_{ik}&=&\left(\nabla_i R_{sj} R_{jl}R_{lk}+\nabla_j R_{sl} R_{ij}R_{lk}+\nabla_l R_{sk}  R_{ij}R_{jl}\right)\nabla_s u\\
    & &+m\left(\nabla_j R_{is} R_{jl}R_{lk}+\nabla_l R_{js} R_{ij}R_{lk}+\nabla_k R_{ls} R_{ij}R_{jl}\right)\nabla_s u\\
    & &+\frac{m+1}{m}u\left(Ric^{2}_{ij}R_{jl}R_{lk}+Ric^{2}_{jl}R_{ij}R_{lk}+Ric^{2}_{lk}R_{ij}R_{jl}\right)\\
    & &+2uR_{ds}\left(R_{dijs}R_{jl}R_{lk}+R_{djls}R_{ij}R_{lk}+R_{dlks}R_{ij}R_{jl}\right)\\
    & &-(m+2)\nabla_s\left(R_{ij}R_{jl}R_{lk}\right)\nabla_s u-\frac{3u}{m}[R-(m+n-2)\lambda]R_{ij}R_{jl}R_{lk}\\
    & &+2u(\nabla_s R_{ij}\nabla_s R_{jl}R_{lk}+\nabla_s R_{ij}R_{jl}\nabla_s R_{lk}+R_{ij}\nabla_s R_{jl}\nabla_s R_{lk})\\
    & &+\frac{3\lambda u}{m}[R-(n-1)\lambda]R_{is}R_{sk}.
\end{eqnarray*} Besides, tracing the above expression, one sees that
\begin{eqnarray*}
    u\Delta Tr(Ric^3)&=&\left(\nabla_i R_{sj} R_{jl}R_{li}+\nabla_j R_{sl} R_{ij}R_{li}+\nabla_l R_{si}  R_{ij}R_{jl}\right)\nabla_s u\\
    & &+m[\nabla_j R_{is} R_{jl}R_{li}+\nabla_l R_{js} R_{ij}R_{li}+\nabla_i R_{ls} R_{ij}R_{jl}]\nabla_s u\\
    & &+\frac{m+1}{m}u[Ric^{2}_{ij}R_{jl}R_{li}+Ric^{2}_{jl}R_{ij}R_{li}+Ric^{2}_{li}R_{ij}R_{jl}]\\
    & &+2uR_{ds}[R_{dijs}R_{jl}R_{li}+R_{djls}R_{ij}R_{li}+R_{dlis}R_{ij}R_{jl}]\\
    & &-(m+2)\nabla_s[R_{ij}R_{jl}R_{li}]\nabla_s u-\frac{3u}{m}[R-(m+n-2)\lambda]R_{ij}R_{jl}R_{li}\\
    & &+2u(\nabla_s R_{ij}\nabla_s R_{jl}R_{li}+\nabla_s R_{ij}R_{jl}\nabla_s R_{li}+R_{ij}\nabla_s R_{jl}\nabla_s R_{li})\\
     & &+\frac{3\lambda u}{m}[R-(n-1)\lambda]R_{is}R_{si}\\
    &=&(m+1)[\nabla_i R_{sj}R_{jl}R_{li}+\nabla_j R_{sl}R_{ij}R_{il}+\nabla_l R_{is}R_{ij}R_{jl}]\nabla_s u\\
    & &+\frac{3(m+1)u}{m}Ric_{ij}^{2}Ric_{ij}^{2}+6uR_{ds}R_{dijs}R_{jl}R_{il}\\
    & &-(m+2)\nabla_s(Tr(Ric^3))\nabla_s u-\frac{3u}{m}\left(R-(m+n-2)\lambda\right)Tr(Ric^3)\\
    & &+\frac{3\lambda u}{m}\left(R-(n-1)\lambda\right)|Ric|^2+6u\nabla_s R_{ij}\nabla_s R_{jl}R_{il}.
\end{eqnarray*} The result then follows from the fact that $\nabla_i R_{sj}R_{jl}R_{li}=\nabla_j R_{sl}R_{ij}R_{il}=\nabla_l R_{is}R_{ij}R_{jl}.$
\end{proof}

Proceeding, we derive an expression for $u\Delta \left(Tr(P^3)\right).$ This will serve as the basis for establishing an inequality (see Lemma~\ref{lemKAp}) involving a suitable nonnegative function depending on $Tr(P^3),$ which is essential for the proof of Theorem~\ref{theodim4}.

\begin{lemma}
\label{lemKA3}
Let $(M^n,\,g)$ be an $n$-dimensional Riemannian manifold satisfying (\ref{mqE}) with constant scalar curvature and $m>1.$ Then we have:
    \begin{eqnarray*}
    u\Delta Tr(P^3)&=&3(m+1)\left(\nabla_i P_{sj}P_{jl}P_{il}\nabla_s u+2\rho\nabla_i P_{sj}P_{ij}\nabla_s u\right)\\
    & &+6u\left(\nabla_s P_{ij}\nabla_s P_{jl}P_{il}+\rho\nabla_s P_{ij}\nabla_s P_{ij}\right)\\
    & &+6u\left(P_{ds}R_{dijs}P_{jl}P_{il}+2\rho P_{ds}R_{dijs}P_{ij}\right)-(m+2)\nabla_s(Tr(P^3))\nabla_s u\\
      & &+\frac{3(m+1)u}{m}Tr(P^4)+\frac{3u}{m}\left(3(m+1)\rho+(m-1)\lambda\right)Tr(P^3)\\
    & &+\frac{3\rho u}{m}\left((m+3)\rho+2(m-1)\lambda\right)|P|^2\\
    & &+\frac{3\rho^2 u}{m}\left((m+1)\rho+(m-1)\lambda\right)Tr(P)\\
    & &+6\rho^3 u \left((m+n-1)\rho-(n-1)\lambda\right),
\end{eqnarray*}
\end{lemma}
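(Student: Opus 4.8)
The strategy is to reproduce, with the tensor $P=Ric-\rho g$ in place of $Ric$, the chain of computations that culminated in Corollary \ref{corKA2}. This is natural because the constancy of $R$ forces $\rho=\frac{(n-1)\lambda-R}{m-1}$ to be constant, so that $\nabla_i P_{jk}=\nabla_i R_{jk}$, $\Delta P_{ik}=\Delta R_{ik}$, the Riemann tensor is untouched, $Tr(P)=R-n\rho$ is constant, the contracted second Bianchi identity gives $\nabla_j P_{jk}=\tfrac12\nabla_k R=0$, and $P(\nabla u)=0$ by \eqref{eq-ridu}. Under these reductions, the right-hand side of the stated formula is exactly what one obtains by inserting $R_{ij}=P_{ij}+\rho g_{ij}$ everywhere into the identity of Corollary \ref{corKA2} and sorting by order in $P$.

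Concretely, I would first establish a $P$-analogue of Lemma \ref{lemKA1} for $u\Delta P_{ik}$. The starting point is the $P$-form of assertion (4) of Lemma \ref{lemmafund},
\begin{equation*}
u(\nabla_i P_{jk}-\nabla_j P_{ik})=mR_{ijkl}\nabla_l u+(\lambda-\rho)(\nabla_i u\,g_{jk}-\nabla_j u\,g_{ik})-(\nabla_i u\,P_{jk}-\nabla_j u\,P_{ik}),
\end{equation*}
obtained by substituting $R_{jk}=P_{jk}+\rho g_{jk}$. Computing $u\Delta P_{ik}=\nabla_j(u\nabla_j P_{ik})-\nabla_j u\,\nabla_j P_{ik}$ precisely as in the proof of Lemma \ref{lemKA1}, using the once- and twice-contracted second Bianchi identities, the Ricci identity, and $\nabla^2 u=\tfrac{u}{m}\big(P-(\lambda-\rho)g\big)$, $\Delta u=\tfrac{u}{m}(R-n\lambda)$, produces a closed expression for $u\Delta P_{ik}$ whose terms are $\nabla P\cdot\nabla u$, a Riemann term $R_{jiks}P_{js}$, a quadratic term $P_{is}P_{sk}$, and lower-order pieces linear in $P$ and scalar, the latter carrying the constants $\rho$ and $\lambda$.

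Next, mimicking Lemma \ref{lemKA2}, I would expand by the Leibniz rule
\begin{eqnarray*}
u\Delta(P^3)_{ik}&=&(u\Delta P_{ij})P_{jl}P_{lk}+P_{ij}(u\Delta P_{jl})P_{lk}+P_{ij}P_{jl}(u\Delta P_{lk})\\
&&+2u\big(\nabla_s P_{ij}\nabla_s P_{jl}P_{lk}+\nabla_s P_{ij}P_{jl}\nabla_s P_{lk}+P_{ij}\nabla_s P_{jl}\nabla_s P_{lk}\big),
\end{eqnarray*}
substitute the $P$-analogue of Lemma \ref{lemKA1} into the three leading terms, and then trace over $k=i$, collapsing the three symmetric copies exactly as in Corollary \ref{corKA2}. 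The quartic contributions assemble into $Tr(P^4)$, the two gradient copies into $\nabla_s P_{ij}\nabla_s P_{jl}P_{il}$ and $|\nabla P|^2$, the curvature contributions into $P_{ds}R_{dijs}P_{jl}P_{il}$, and the remaining contractions, reduced with $Tr(P)=R-n\rho$, $\nabla_j P_{jk}=0$ and $P(\nabla u)=0$, into the displayed $Tr(P^3)$, $|P|^2$, $Tr(P)$ and pure scalar terms.

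The main obstacle is entirely the algebraic bookkeeping of this final collection. After substitution one is left with a long sum of monomials of mixed degree in $P$ and mixed powers of $\rho$ and $\lambda$, which must be regrouped into the seven invariants appearing on the right-hand side, and the composite coefficients $3(m+1)\rho+(m-1)\lambda$, $(m+3)\rho+2(m-1)\lambda$, $(m+1)\rho+(m-1)\lambda$ and $(m+n-1)\rho-(n-1)\lambda$ must be verified to emerge as stated. The delicate points are the repeated contractions of the Riemann tensor against $g$ (each yielding a Ricci term that must again be rewritten through $Ric=P+\rho g$, splitting into a Riemann piece and an algebraic piece) and the careful tracking of how the substitution $Ric=P+\rho g$ in the undifferentiated factors generates the $\rho$-weighted companions of the leading Riemann and gradient terms; this is where sign and coefficient errors are most likely to arise.
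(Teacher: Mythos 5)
Your proposal is correct, and it arrives at the identity by a mildly different route than the paper. The paper's proof of Lemma \ref{lemKA3} works in the order trace-then-substitute: it keeps the chain Lemma \ref{lemKA1} $\to$ Lemma \ref{lemKA2} $\to$ Corollary \ref{corKA2} entirely in terms of $Ric$, and only at the end inserts $R_{ij}=P_{ij}+\rho g_{ij}$ into each invariant of Corollary \ref{corKA2}, expanding $Tr(Ric^3)=Tr(P^3)+3\rho|P|^2+3\rho^2\,Tr(P)+n\rho^3$, $Ric^2_{ij}Ric^2_{ij}$, $R_{ds}R_{dijs}R_{jl}R_{il}$, $\nabla_s R_{ij}\nabla_s R_{jl}R_{il}$, and $\nabla_i R_{sj}R_{jl}R_{il}\nabla_s u$ in powers of $P$, then simplifying with $\rho^3 R=-(m-1)\rho^4+(n-1)\lambda\rho^3$ and $R-(m+n-2)\lambda=-(m-1)(\rho+\lambda)$. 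One ingredient of that route which your list of reductions omits is the constancy of $|P|^2$ (via $|P|^2=(\lambda-\rho)\,Tr(P)$, assertion (3) of Lemma \ref{lemmafund}), which is what permits the passage from $u\Delta Tr(Ric^3)$ to $u\Delta Tr(P^3)$ and from $\nabla_s Tr(Ric^3)\nabla_s u$ to $\nabla_s Tr(P^3)\nabla_s u$; your substitute-then-trace plan does not need it, since you never form $Tr(Ric^3)$ at all. Your concrete plan --- derive the $P$-form of assertion (4) of Lemma \ref{lemmafund} (valid because constant $R$ makes $\rho$ constant), recompute $u\Delta P_{ik}$ by the same Bianchi/Ricci-identity argument, then Leibniz-expand $u\Delta(P^3)_{ik}$ and trace --- is legitimate, and in fact your $P$-analogue of Lemma \ref{lemKA1} is precisely what the paper itself derives later, in the proof of Proposition \ref{propKK2} (its formula for $u\Delta P_{ik}$, specialized to $n=4$ and $R=2(m+2)\rho$), so your intermediate step is validated by the paper's own subsequent computations. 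The trade-off is clear: the paper's order reuses the already-proved Corollary \ref{corKA2} so that only algebraic expansions of curvature invariants remain, while your order avoids mixed $Ric$/$P$ monomials downstream at the cost of re-running the Laplacian computation, where contractions such as $R_{jiks}g_{js}=-R_{ik}=-(P_{ik}+\rho g_{ik})$ regenerate lower-order companions of the Riemann term --- exactly the sign-sensitive bookkeeping you correctly flag as the main hazard.
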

\begin{proof} Initially, we compute $Ric^{3}_{ik}$ in terms of $P=Ric-\rho g,$ where $\rho=\frac{(n-1)\lambda - R}{m-1}.$ Indeed, we have

\begin{eqnarray*}
    Ric^{3}_{ik}&=&R_{ij}R_{jl}R_{lk}\\
    &=&(P_{ij}+\rho g_{ij})(P_{jl}+\rho g_{jl})(P_{lk}+\rho g_{lk})\\
    &=&P_{ij}P_{jl}P_{lk}+P_{ij}P_{jl}\rho g_{lk}+P_{ij}\rho g_{jl}P_{lk}+P_{ij}\rho g_{jl}\rho g_{lk}\\
    & &+\rho g_{ij}P_{jl}P_{lk}+\rho g_{ij}P_{jl}\rho g_{lk}+\rho g_{ij}\rho g_{jl} P_{lk}+\rho g_{ij}\rho g_{jl}\rho g_{lk}\\
    &=&P_{ik}^{3}+3\rho P_{ik}^{2}+3\rho^2 P_{ik}+\rho^3 g_{ik}.
\end{eqnarray*} Whence, it follows that 

\begin{equation}
\label{trR-trPaa}
    Tr(Ric^3)=Ric^{3}_{ii}=Tr(P^3)+3\rho|P|^2+3\rho^2 Tr(P)+n\rho^3.
\end{equation} Next, notice that 
\begin{eqnarray*}
Tr (P)=\frac{R(m+n-1)-n(n-1)\lambda}{m-1}
\end{eqnarray*} and moreover, by Proposition 3.3 in \cite{Petersen-Chenxu} (see also (3) in Lemma \ref{lemmafund}), since $M^n$ has constant scalar curvature, one deduces that $|P|^2 = (\lambda-\rho) Tr(P).$ Besides, $Tr(P)$ and $|P|^2$ are also constants. So, we have
\begin{equation}
\label{lmn67012gh}
    u\Delta (Tr(Ric^3))=u\Delta(Tr(P^3)).
\end{equation}

We now need to obtain an expression for $\nabla_i R_{sj}R_{jl}R_{il}\nabla_s u$ in terms of $P.$ Indeed, one observes that
\begin{eqnarray}
\label{lmn67012gh22}
    \nabla_i R_{sj}R_{jl}R_{il}\nabla_s u&=&[\nabla_i(P_{sj}+\rho g_{sj})](P_{jl}+\rho g_{jl})(P_{il}+\rho g_{il})\nabla_s u\nonumber\\
    &=&\nabla_i P_{sj}P_{jl}P_{il}\nabla_s u+\nabla_i P_{sj}P_{jl}\rho g_{il}\nabla_s u+\nabla_i P_{sj}\rho g_{jl}P_{il}\nabla_s u\nonumber\\
    & &+\nabla_i P_{sj}\rho g_{jl}\rho g_{il}\nabla_s u\nonumber\\
    &=&\nabla_i P_{sj}P_{jl}P_{il}\nabla_s u+\rho\nabla_i P_{sj}P_{ji}\nabla_s u+\rho\nabla_i P_{sj}P_{ij}\nabla_s u\nonumber\\
    & &+\rho^2\nabla_i P_{si}\nabla_s u\nonumber\\
    &=&\nabla_i P_{sj}P_{jl}P_{il}\nabla_s u+2\rho\nabla_i P_{sj}P_{ij}\nabla_s u,
\end{eqnarray} where we have used that $\nabla_i P_{si}=0,$ which follows from the fact that $M$ has constant scalar curvature jointly with the twice contracted second Bianchi identity. Next, we compute 
\begin{eqnarray}
\label{lmn67012gh11}
    Ric^{2}_{ij}Ric^{2}_{ij}&=& R_{ik}R_{kj}R_{jl}R_{li}\nonumber\\
    &=& \left(P_{ik}P_{kj}+2\rho P_{ij}+\rho^{2} g_{ij}\right)\left(P_{il}P_{lj}+2\rho P_{ij}+\rho^{2} g_{ij}\right)\nonumber\\
    &=& P_{ik}P_{kj}P_{il}P_{lj} +4\rho P_{ik}P_{kj}P_{ji}+6\rho^2 P_{ij}P_{ij} + 4\rho^3 Tr(P)+\rho^4 n\nonumber\\
    &=&Tr(P^4)+4\rho Tr(P^3)+6\rho^2|P|^2+4\rho^3Tr(P)+n\rho^4
\end{eqnarray} and
\begin{eqnarray}
\label{lmn67012gh00}
    R_{ds}R_{dijs}R_{jl}R_{il} &=& (P_{ds}+\rho g_{ds})R_{dijs}(P_{jl}+\rho g_{jl})(P_{il}+\rho g_{il})\nonumber\\
    &=& (P_{ds}P_{jl}P_{il}+2\rho P_{ds}P_{ij}+\rho^2 P_{ds}g_{ij}+\rho g_{ds}P_{jl}P_{il}\nonumber\\&&+2\rho^2 g_{ds}P_{ij} +\rho^3 g_{ds}g_{ij})R_{dijs}\nonumber\\
    &=& P_{ds}R_{dijs}P_{jl}P_{il}+2\rho P_{ds}P_{ji}R_{dijs}-\rho^2 P_{ds}(P_{ds}+\rho g_{ds})\nonumber\\&& -\rho (P_{ij}+\rho g_{ij})P_{jl}P_{il}-2\rho^2 P_{ij}(P_{ij}+\rho g_{ij})-\rho^3 R\nonumber\\
    &=&P_{ds}R_{dijs}P_{jl}P_{il}+2\rho P_{ds}R_{dijs}P_{ij}-4\rho^2|P|^2-3\rho^3 Tr(P)\nonumber\\
    & &-\rho Tr(P^3)-\rho^3 R.
\end{eqnarray} 

At the same time, observe that 

\begin{eqnarray}
\label{lmn67012ghaa}
    \nabla_s R_{ij}\nabla_s R_{jl}R_{il}&=&\nabla_s(P_{ij}+\rho g_{ij})\nabla_s(P_{jl}+\rho g_{jl})(P_{il}+\rho g_{il})\nonumber\\
    &=&\nabla_s P_{ij}\nabla_s P_{jl} P_{il}+\rho \nabla_s P_{ij}\nabla_s P_{ij}.
\end{eqnarray} Moreover, as already mentioned, the constant scalar curvature condition implies that $|P|$ and $Tr(P)$ are also constants. Therefore, one deduces that 

\begin{eqnarray}
\label{lmn67012ghbb}
    \nabla_s(Tr(Ric^3))\nabla_s u=\nabla_s(Tr(P^3))\nabla_s u.
\end{eqnarray} Thereby, using (\ref{lmn67012gh}), jointly with (\ref{trR-trPaa}), (\ref{lmn67012gh22}), (\ref{lmn67012gh11}), (\ref{lmn67012gh00}), (\ref{lmn67012ghaa}) and  (\ref{lmn67012ghbb}) into Corollary \ref{corKA2}, one obtains that
\begin{eqnarray*}
    u\Delta Tr(P^3)&=&u\Delta Tr(Ric^3)\\
    &=&3(m+1)\left(\nabla_i P_{sj}P_{jl}P_{il}\nabla_s u+2\rho\nabla_i P_{sj}P_{ij}\nabla_s u\right)\\
    & &+\frac{3(m+1)u}{m}\left(Tr(P^4)+4\rho Tr(P^3)+6\rho^2|P|^2+4\rho^3Tr(P)+n\rho^4\right)\\
    & &+6u\left(P_{ds}R_{dijs}P_{jl}P_{il}+2\rho P_{ds}R_{dijs}P_{ij}-4\rho^2|P|^2-3\rho^3 Tr(P)-\rho Tr(P^3)-\rho^3 R\right)\\
    & &-(m+2)\nabla_s(Tr(P^3))\nabla_s u\\
    & &-\frac{3u}{m}\left(R-(m+n-2)\lambda\right)\left(Tr(P^3)+3\rho|P|^2+3\rho^2Tr(P)+n\rho^3\right)\\
    & &+\frac{3\lambda u}{m}\left(R-(n-1)\lambda\right)\left(|P|^2+2\rho Tr(P)+n\rho^2\right)\\
    & &+6u\left(\nabla_s P_{ij}\nabla_sP_{jl}P_{il}+\rho\nabla_s P_{ij}\nabla_s P_{ij}\right),
\end{eqnarray*} where we also used that $|Ric|^2=|P+\rho g|^2=|P|^2+2\rho Tr(P)+n\rho^2.$ Consequently, taking into account that $\rho^3R =-(m-1)\rho^4+\rho^3(n-1)\lambda$ and $R-(m+n-2)\lambda=-(m-1)(\rho+\lambda),$ we get

\begin{eqnarray*}
    u\Delta Tr(P^3)&=&3(m+1)\left(\nabla_i P_{sj}P_{jl}P_{il}\nabla_s u+2\rho\nabla_i P_{sj}P_{ij}\nabla_s u\right)\\
    & &+6u\left(\nabla_s P_{ij}\nabla_s P_{jl}P_{il}+\rho\nabla_s P_{ij}\nabla_s P_{ij}\right)\\
    & &+6u\left(P_{ds}R_{dijs}P_{jl}P_{il}+2\rho P_{ds}R_{dijs}P_{ij}\right)-(m+2)\nabla_s(Tr(P^3))\nabla_s u\\
    & &+\frac{3(m+1)u}{m}Tr(P^4)+\left(\frac{12(m+1)\rho u}{m}-6\rho u+\frac{3u}{m}(m-1)(\rho+\lambda)\right)Tr(P^3)\\
    & &+\left(\frac{18(m+1)\rho^2 u}{m}-24\rho^2u+\frac{9\rho u}{m}(m-1)(\rho+\lambda)-\frac{3(m-1)\lambda\rho u}{m}\right)|P|^2\\
    & &+\left(\frac{12(m+1)\rho^3 u}{m}-18\rho^3 u+\frac{9\rho^2 u}{m}(m-1)(\rho+\lambda)-\frac{6(m-1)\lambda\rho^2 u}{m}\right)Tr(P)\\
    & &+\left(\frac{3(m+1)n\rho^4 u}{m}-6u\rho^3(-(m-1)\rho+(n-1)\lambda)+\frac{3n\rho^3 u}{m}(m-1)(\rho+\lambda)\right.\\
    & &\left.-\frac{3(m-1)n\lambda\rho^3 u}{m}\right).
\end{eqnarray*}
Simplifying the last four terms in the right hand side of the above expression, we achieve
\begin{eqnarray*}
    u\Delta Tr(P^3) &=& 3(m+1)\left(\nabla_i P_{sj}P_{jl}P_{il}\nabla_s u+2\rho\nabla_i P_{sj}P_{ij}\nabla_s u\right)\\
    & &+6u\left(\nabla_s P_{ij}\nabla_s P_{jl}P_{il}+\rho\nabla_s P_{ij}\nabla_s P_{ij}\right)\\
    & &+6u\left(P_{ds}R_{dijs}P_{jl}P_{il}+2\rho P_{ds}R_{dijs}P_{ij}\right)- (m+2)\nabla_s(Tr(P^3))\nabla_s u\\
    & &+\frac{3(m+1)u}{m}Tr(P^4)+\frac{3u}{m}\left(3(m+1)\rho+(m-1)\lambda\right)Tr(P^3)\\
    & &+\frac{3\rho u}{m}\left((m+3)\rho+2(m-1)\lambda\right)|P|^2\\
    & &+\frac{3\rho^2 u}{m}\left((m+1)\rho+(m-1)\lambda\right)Tr(P)\\
    & &+ 6\rho^3 u \left((m+n-1)\rho-(n-1)\lambda\right),
\end{eqnarray*} which finishes the proof of the lemma. 

\end{proof}

\medskip

\section{The Proof of Theorem \ref{theo1}, Theorem \ref{theo3} and Corollary \ref{theo2}}
\label{secProofs}

In this section, we are going to present the proof of Theorem \ref{theo1}, Theorem \ref{theo3} and Corollary \ref{theo2}.

\subsection{Proof of Theorem \ref{theo1}}

\begin{proof}
In the first part of the proof, we shall follow Proposition 3.13 of \cite{Petersen-Chenxu}. To begin with, denoting $\alpha=\frac{ R+(m-n)\lambda}{m(m-1)}$ and $\widetilde{\mu}=\frac{\mu}{m-1},$ one sees from \eqref{transnormal} that
\begin{eqnarray*}
	\frac{|\nabla u|^2}{\widetilde{\mu}-\alpha u^2}=1,
\end{eqnarray*} which defines the distance function $r=\frac{1}{\sqrt{\alpha}}\arccos\left(\frac{u}{\sqrt{\widetilde{\mu}\alpha^{-1}}}\right).$ In particular, the potential function can be recovered as $u(r)=\sqrt{\widetilde{\mu}\alpha^{-1}}\cos(\sqrt{\alpha}r).$ From Remark \ref{remK}, the set of critical points for $u$ coincides with the set of maximum values, namely, $Crit(u)=MAX(u).$ Thus, we may identify $MAX(u)=r^{-1}(0).$ So, following the argument in \cite[Lemma 7]{Wang} with the appropriate adaptation, and using that $u$ vanishes on each boundary component, we deduce that each connected component of $MAX(u)$ is a smooth submanifold. It then follows from Lemma \ref{lemmaD} that
\begin{eqnarray}\label{laplacian r}
	\Delta r=\textrm{tr}(\mathcal{A}_\theta)+\frac{n-k-1}r+O(r),
\end{eqnarray} where $k$ is the dimension of a connected component $N$ of $MAX(u)$ and $A_{\theta}$ stands for the second fundamental form with respect to $\theta.$ By \eqref{mqE}, without loss of generality, we may multiply the potential function $u$ by a constant $\beta$ so that  $\beta\, u$ is a potential function for the same metric and constant $\lambda$ as $u.$ In view of this, we can assume that $u(r)=\cos(\sqrt{\alpha}r)$ and consequently, we deduce
\begin{eqnarray*}
	\nabla_i\nabla_j u=-\sqrt{\alpha}\sin(\sqrt{\alpha} r)\nabla_i\nabla_j r-\alpha\cos(\sqrt{\alpha}r)\nabla_i r\nabla_j r 
\end{eqnarray*}
and 
\begin{eqnarray}\label{delta-u}
	\Delta u=-\sqrt{\alpha}\sin(\sqrt{\alpha}r)\Delta r-\alpha\cos(\sqrt{\alpha}r)|\nabla r|^2.
\end{eqnarray} Taking into account the Taylor expansions, around $r=0,$

\begin{eqnarray*}
\sin(\sqrt{\alpha}r)=\sqrt{\alpha} r+O(r^3)\;\,\,\,\,\mathrm{and}\;\,\,\,\,\,\cos(\sqrt{\alpha}r)=1+O(r^2),
\end{eqnarray*} we obtain from  \eqref{laplacian r} and \eqref{delta-u} that

\begin{eqnarray}\nonumber
	\Delta u&=&(-\alpha r+O(r^3))\left(\textrm{tr}(\mathcal{A}_\theta)+\frac{n-k-1}r+O(r)\right)+(-\alpha+O(r^2))\\\label{du-o}
	&=&-\alpha(n-k)+O(r).
\end{eqnarray}

It is known from \eqref{tensorP} that $P=Ric-\frac{(n-1)\lambda-R}{m-1}g.$ In particular, by setting $\rho=\frac{(n-1)\lambda-R}{m-1},$ we may write \eqref{eq-laplacian} in terms of $P$ and $\rho,$ at the connected component $N$ of $MAX(u),$ as

\begin{eqnarray}\label{du-trp}
	\Delta u=\frac{1}{m}(Tr(P)-n(\lambda-\rho)),
\end{eqnarray} where we have used that $u\mid_{N}=1.$ Then, since $\alpha=\frac{\lambda-\rho}{m},$ we combine \eqref{du-o}, restricted to $N,$ and \eqref{du-trp} in order to infer
\begin{eqnarray*}
	Tr(P)=k(\lambda-\rho).
\end{eqnarray*} 

We now claim that tangent and normal vector fields to $N$ are the eigenvectors corresponding to $\lambda-\rho$ and $0,$ respectively. Indeed, given a point $p\in N$ and $X\in \mathfrak{X}(N)$ a tangent vector at $p,$ since $\nabla u\mid_N =0,$ we have

\begin{equation*}
\nabla^2 u(X)(p)=\nabla_{X}\nabla u (p)=0,
\end{equation*} where we have used the fact that $\nabla_{X} \nabla u(p)$ only depends on the value of $X(p)$ and $\nabla u$ along of a curve through $p$ with $X$ as a tangent vector at $p.$ Hence, by using  (\ref{mqE}), we obtain

\begin{equation*}
0=\nabla_{X}\nabla u (p)=\frac{u}{m}\left(P(X)-(\lambda-\rho)X\right).
\end{equation*} Consequently, $P(X)=(\lambda -\rho)X,$ for all $X\in \mathfrak{X}(N)$ and therefore, the tangent vectors to $N$ corresponds to the eigenvalue $\lambda-\rho$ for $P.$ Besides, it follows from assertion (2) of Proposition \ref{prop-p} that, at $Crit(u)$, 
\begin{eqnarray*}
	P\circ(P-(\lambda-\rho)I)=0.
\end{eqnarray*} Thus, the only possible eigenvalues for $P$ at $N$ are $\lambda-\rho$ and $0.$ Moreover, since $Tr(P)=k(\lambda-\rho)$ and $k=dim(N),$ one concludes that normal vectors to $N$ correspond to the eigenvalue $0.$

Proceeding, one concludes that
\begin{eqnarray*}
    P\mid_N=\left(\begin{array}{cc}
        (\lambda-\rho)I_k & 0 \\
        0 & [\textbf{0}]_{n-k}
    \end{array}\right)
\end{eqnarray*}
is the $n\times n$ matrix of the tensor $P$ at the manifold $N$. In terms of the Ricci tensor, we have 
\begin{eqnarray}\label{eq-trace}
    Ric\mid_N=\left(\begin{array}{cc}
        \lambda I_k & 0 \\
        0 & \frac{(n-1)\lambda- R}{m-1} I_{n-k}
    \end{array}\right).
\end{eqnarray} In particular, taking the trace in \eqref{eq-trace}, we see that
\begin{eqnarray*}
     R=\frac{k(m-n)+n(n-1)}{m+n-k-1}\lambda,
\end{eqnarray*}
for some $k\in \{0,1\ldots,n-1\},$ where we have also used that $R<n\lambda$ (see Remark \ref{remJK}). So, the proof is finished.
\end{proof}

\subsection{Proof of Theorem \ref{theo3}}

\begin{proof}
Since $R=(n-1)\lambda,$ it follows from \eqref{eq-ridu} that the eigenvalue $\lambda_1$ associated to the eigenvector $\nabla u$ for the Ricci tensor is zero. We now need to show that all non-zero eigenvalues of the Ricci tensor are equal to $\lambda.$ Before doing so, we first claim that

\begin{eqnarray}\label{eq-(n-1)}
    |\mathring{Ric}|^2=\frac{R^2}{n(n-1)}.
\end{eqnarray} Indeed, since $R$ is constant, assertion (3) in Lemma \ref{lemmafund} (see also \cite[Lemma 3.2]{CaseShuWei}) yields 

    $$(m-1)|\mathring{Ric}|^2=-\frac{m+n-1}{n}( R-n\lambda)\left( R-\frac{n(n-1)}{m+n-1}\lambda\right).$$ Substituting $R=(n-1)\lambda$ into the above expression, we obtain
\begin{eqnarray}
(m-1)|\mathring{Ric}|^2 &=&-R^2 \frac{(m+n-1)}{n}\left(1-\frac{n}{n-1}\right)\left(1-\frac{n}{m+n-1}\right),
\end{eqnarray} which immediately implies $$|\mathring{Ric}|^2=\frac{R^2}{n(n-1)},$$ as claimed.

Let $\lambda_i,$ $i\neq 1,$ denote the possible non-zero eigenvalues of the Ricci tensor. Then
\begin{equation*}
\sum_{i=2}^{n}(\lambda_i-\lambda)^2 = |Ric|^2-2\lambda R+(n-1)\lambda^2
=|\mathring{Ric}|^2-\frac{R^2}{n(n-1)},
\end{equation*} where we have used the identities $|\mathring{Ric}|^2=|Ric|^2-\frac{R^2}{n}$ and $R=(n-1)\lambda.$ Therefore, one obtains from \eqref{eq-(n-1)} that $\lambda_i=\lambda,$ for $i=2, \ldots ,n,$ that is, the eigenvalues of the Ricci are all constants with $\lambda_2=\ldots=\lambda_n=\lambda.$ Consequently, Corollary \ref{coro-tensort} ensures that $T\equiv 0.$ In particular, since the Ricci tensor is parallel, the Cotton tensor (\ref{cot1}) also vanishes. Thus, by Lemma \ref{l1}, we have $W_{ijkl}\nabla_l u=0.$ We are therefore in a position to invoke Theorem 1.2 of \cite{He-Petersen-Wylie2012} to infer that the metric splits off as $g=dt^2+\varphi^2(t)\widetilde{g}_{_N},$ where $\widetilde{g}_N$ is $\kappa$-Einstein with non-negative Ricci curvature and $u=u(t).$

In view of \eqref{eq-ridu}, we get
   $$Ric(\nabla u,\nabla u)=\frac{(n-1)\lambda-R}{m-1}(u')^2=0$$ and hence, we may apply Proposition \ref{laux} to infer

\begin{eqnarray*}
    0=Ric(\nabla u,\nabla u)=(u')^2 Ric(\partial t,\partial t)=-(u')^2\frac{(n-1)}{\varphi}\varphi''.
\end{eqnarray*} Since $u$ is analytical in harmonic coordinates (and $u$ is not constant), we conclude that $\varphi''(t)/\varphi(t)=0,$ which implies that  $\varphi(t)=c$ or $\varphi(t)=c t$, for some positive constant $c.$ However, as mentioned in Section 2.1, the second case can not hold.

Proceeding, since $g=dt^2+c^2 \widetilde{g}_{_N}$ and $\widetilde{g}_{_N}$ is a $\kappa$-Einstein metric, we may use again Proposition \ref{laux} to deduce
\begin{eqnarray*}
    Ric(V,W)=\kappa\, \widetilde{g}_{_N}(V,W).
\end{eqnarray*} Consequently, the scalar curvature is $R=\frac{\kappa}{c^2}(n-1)$ and moreover, $\lambda=\frac{\kappa}{c^2}$ and $(N^{n-1},\,g_{_N})$ is $\lambda$-Einstein manifold, where $g_{_N}=c^2\,\widetilde{g}_{_N}.$

Finally, observe that, by \eqref{Hlie} and the fact that $Ric=\lambda g_{_N},$ the potential function $u=u(t)$ satisfies
\begin{eqnarray*}
    u''(t)dt^2=\nabla^2 u=\frac{u}{m}(Ric-\lambda g)=-\lambda\frac{u}{m}dt^2,
\end{eqnarray*}
with the boundary condition $u\mid_{\partial M}=0.$ Hence, without loss of generality, we may take $u(t)=\sin\left(\frac{\sqrt{\lambda}}{\sqrt{m}}t\right).$
From this, it follows that $M^n$ is isometric, up to scaling, to the cylinder $\left[0,\frac{\sqrt{m}}{\sqrt{\lambda}}\pi\right]\times N,$ where $N$ is a compact $\lambda$-Einstein manifold. This finishes the proof of the theorem. 
\end{proof}

\vspace{0.30cm}

Next, we establish a key proposition, valid for arbitrary dimensions $n\geq 3,$ which will be used in the proofs of Corollary \ref{theo2} and Corollary \ref{corA}.

\begin{proposition}
 \label{propK11} 
There is no compact nontrivial quasi-Einstein manifold $M^n$ with boundary and constant scalar curvature $R=\frac{m+n(n-2)}{m+n-2}\lambda.$
\end{proposition}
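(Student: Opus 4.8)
The plan is to proceed by contradiction, converting the hypothesis on $R$ into information about the maximum set $MAX(u)$ and then obstructing that set topologically. The first observation is that the value $R=\frac{m+n(n-2)}{m+n-2}\lambda$ is precisely the one produced in the proof of Theorem \ref{theo1} by a connected component $N$ of $MAX(u)$ of dimension $k=1$: substituting $k=1$ into the general formula $R=\frac{k(m-n)+n(n-1)}{m+n-k-1}\lambda$ returns exactly this value. Since (for $m>1$) the numbers listed in \eqref{estR} are mutually distinct, and since the proof of Theorem \ref{theo1} shows that the dimension of $MAX(u)$ determines $R$ through that formula, the hypothesis forces $\dim N=1$ for every connected component $N$ of $MAX(u)$.

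Assuming such an $M$ exists, I would then invoke the structural facts already assembled in Section \ref{preliminaries}: the potential $u$ is transnormal and isoparametric, it may be written as $u=u(r)$ with $r$ the distance to $MAX(u)$, and by Remark \ref{remK} one has $MAX(u)=Crit(u)$, so $u$ has no interior critical point off $MAX(u)$. Moreover, as established in the proof of Theorem \ref{theo1} (following the adaptation of Wang's argument), each connected component of $MAX(u)$ is a smooth closed submanifold; since its points are interior, it has no boundary. A compact connected $1$-dimensional manifold without boundary is a circle, so each component is diffeomorphic to $S^1$.

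The geometric core of the argument is that the level sets $\{r=c\}$ foliate the interior all the way from $MAX(u)$ out to the boundary $\{u=0\}$ with no intervening critical points, so the normalized gradient flow of $u$ exhibits $M$ as (retracting onto) the closed normal disk bundle over $MAX(u)$. In particular $M$ deformation retracts onto $MAX(u)$; since $M$ is connected, so is $MAX(u)$, whence $MAX(u)\cong S^{1}$ and $\pi_{1}(M)\cong\pi_{1}(S^{1})=\mathbb{Z}$. This is incompatible with Remark \ref{remarkpifinite}, which guarantees that $\pi_{1}(M)$ is finite because $\lambda>0$, and the contradiction rules out this value of $R$. The step I expect to require the most care is making the disk-bundle/retraction picture rigorous — namely that $MAX(u)$ is the full interior critical locus and that the normal exponential map is a diffeomorphism up to the boundary — though each of these follows from the transnormality of $u$ together with the uniqueness of the critical level guaranteed by Remark \ref{remK}.
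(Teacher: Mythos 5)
Your proposal is correct and takes essentially the same route as the paper's proof: the paper likewise argues by contradiction that this value of $R$ corresponds to $k=1$ in Theorem \ref{theo1}, uses the isoparametric/transnormal structure of $u$ to conclude that $MAX(u)$ is a connected one-dimensional focal variety, hence $\mathbb{S}^1$, with $M$ homotopy equivalent to it, and contradicts the finiteness of $\pi_{1}(M)$ from Remark \ref{remarkpifinite}. The only difference is presentational: where you sketch the gradient-flow/disk-bundle retraction by hand, the paper simply cites Wang, Ge--Tang and Miyaoka for these structural facts.
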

\begin{proof}
 We argue by contradiction, assuming that a compact nontrivial quasi-Einstein manifold $M^n$ with boundary has constant scalar curvature $R=\frac{m+n(n-2)}{m+n-2}\lambda,$ which corresponds the case $k=1$ in Theorem \ref{theo1}. Hence, by the work of Wang \cite{Wang} (see also \cite[Theorem 1.1]{GT} and \cite[Theorem 6.1]{MP}), one obtains that  $MAX(u)$ is a focal variety of the isoparametric function $u$ of dimension one and connected (see \cite[Theorem 2.2]{GT2}). So $MAX(u)$ is  totally geodesic. This therefore implies that $MAX(u)=\Bbb{S}^1$ and consequently, $M$ is homotopic to $\Bbb{S}^1$ (see \cite{Miyaoka}), which leads to a contradiction with the fact that $M^n$ has finite fundamental group (see Remark \ref{remarkpifinite}). Thus, the proof is completed. 
\end{proof}

\subsection{Proof of Corollary \ref{theo2}}

\begin{proof}
To begin with, we invoke Theorem \ref{theo1} and Proposition \ref{propK11} to infer that the scalar curvature is either $R=\frac{6}{m+2}\lambda$ or $R=2\lambda.$ In the first case, it suffices to use Proposition 2.4 in  \cite{Petersen-Chenxu}  to conclude that $(M^3,\,g)$ is isometric to the standard hemisphere $\mathbb{S}^{3}_{+}.$ In the second case, when $R=2\lambda,$ we can apply Theorem \ref{theo3} to infer that $(M^3,\,g)$ is isometric, up to scaling, to the cylinder $I\times N,$ where $N$ is a compact $\lambda$-Einstein manifold. Moreover, from \eqref{eqe1} and the Killing–Hopf theorem, we deduce that $N=\mathbb{S}^2.$ This completes the proof of Corollary \ref{theo2}.

\end{proof}

\section{The Proof of Theorem \ref{theodim4} and Corollary \ref{corA}}
\label{secthm4}

In this section, we present the proofs of Theorem \ref{theodim4} and Corollary \ref{corA}. In the first part, we follow the approach developed by Cheng and Zhou in \cite{ChengZhou}. To this end, we first establish the following proposition.

\begin{proposition}
\label{propKK1}
    Let $(M^4,\,g,\,u,\,\lambda)$ be an $m$-quasi-Einstein manifold with $m>1$ and constant scalar curvature $R=\frac{2(m+2)\lambda}{m+1}.$ Then we have
\begin{eqnarray}
\label{eqpk1p}
u\Delta Tr(P^3)+(m+2)\langle \nabla(Tr(P^3)),\,\nabla u\rangle&=& 6u\lambda Tr(P^3) + 6\frac{\lambda^2}{m+1}u |P|^2 \nonumber\\&&+6u\left(\nabla_s P_{ij}\nabla_s P_{jl}P_{il}+\rho\nabla_s P_{ij}\nabla_s P_{ij}\right)\nonumber\\
    &&+6u\left(P_{ds}R_{dijs}P_{jl}P_{il}+2\rho P_{ds}R_{dijs}P_{ij}\right)\\& &+12\rho^4 m^2 (m+1)u.\nonumber
\end{eqnarray} 
\end{proposition}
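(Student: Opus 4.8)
The plan is to specialize the general identity of Lemma \ref{lemKA3} to $n=4$ and to the given scalar curvature, and then to convert the one group of terms involving first derivatives of $P$ contracted with $\nabla u$ into purely algebraic terms, after which everything collapses to the asserted form. First I would record the relevant constants. Since $R=\frac{2(m+2)\lambda}{m+1}$ and $n=4$, a direct computation gives $\rho=\frac{(n-1)\lambda-R}{m-1}=\frac{\lambda}{m+1}$, hence $\lambda-\rho=m\rho$ and $\lambda=(m+1)\rho$. Using $Tr(P)=\frac{R(m+n-1)-n(n-1)\lambda}{m-1}$ one finds $Tr(P)=2m\rho$, and the integrability relation $|P|^2=(\lambda-\rho)Tr(P)$ (recorded in the proof of Lemma \ref{lemKA3}) then yields $|P|^2=2m^2\rho^2$. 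All of these quantities are constant because the scalar curvature is constant.

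The key reduction concerns the first group $3(m+1)(\nabla_i P_{sj}P_{jl}P_{il}\nabla_s u+2\rho\nabla_i P_{sj}P_{ij}\nabla_s u)$ appearing in Lemma \ref{lemKA3}, which does not occur in \eqref{eqpk1p}. I would eliminate it using $P(\nabla u)=0$: differentiating $P_{sj}\nabla_s u=0$ and inserting \eqref{mqE} in the form $\nabla_i\nabla_s u=\frac{u}{m}(P_{is}-(\lambda-\rho)g_{is})$ reproduces the relation already obtained in the proof of Proposition \ref{prop-p}, namely $\nabla_i P_{sj}\nabla_s u=-\frac{u}{m}(P_{is}P_{sj}-(\lambda-\rho)P_{ij})$. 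Contracting this against $P_{jl}P_{il}$ and against $P_{ij}$, and recognizing $P_{is}P_{sj}P_{jl}P_{il}=Tr(P^4)$, $P_{ij}P_{jl}P_{il}=Tr(P^3)$ and $P_{is}P_{sj}P_{ij}=Tr(P^3)$, turns the first group into algebraic terms in $Tr(P^4)$, $Tr(P^3)$ and $|P|^2$. The essential observation is that this reduction produces exactly $-\frac{3(m+1)u}{m}Tr(P^4)$, which cancels the explicit $\frac{3(m+1)u}{m}Tr(P^4)$ term of Lemma \ref{lemKA3}; this is why no quartic trace survives in \eqref{eqpk1p}.

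Finally I would collect coefficients. The surviving $Tr(P^3)$ contributions combine, after substituting $\lambda=(m+1)\rho$, to the coefficient $6u\lambda$, matching the first term on the right of \eqref{eqpk1p}. For the remaining quantities I would keep $6\lambda\rho u|P|^2$ as in the statement and verify that the leftover $|P|^2$-coefficient, the $Tr(P)$-term $\frac{3\rho^2 u}{m}((m+1)\rho+(m-1)\lambda)Tr(P)$, and the constant term $6\rho^3 u((m+3)\rho-3\lambda)$ (the $n=4$ value of $6\rho^3 u((m+n-1)\rho-(n-1)\lambda)$) together reduce to $12\rho^4 m^2(m+1)u$ once $Tr(P)=2m\rho$ and $|P|^2=2m^2\rho^2$ are inserted. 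The derivative terms $6u(\nabla_s P_{ij}\nabla_s P_{jl}P_{il}+\rho\nabla_s P_{ij}\nabla_s P_{ij})$ and the curvature terms $6u(P_{ds}R_{dijs}P_{jl}P_{il}+2\rho P_{ds}R_{dijs}P_{ij})$ are carried over verbatim from Lemma \ref{lemKA3}, while the term $-(m+2)\nabla_s(Tr(P^3))\nabla_s u$ is moved to the left-hand side to form $(m+2)\langle\nabla(Tr(P^3)),\nabla u\rangle$.

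The main obstacle is purely organizational rather than conceptual: one must perform the index contractions in the reduction of the first group carefully enough that the $Tr(P^4)$ cancellation is exact, and then carry out several algebraic simplifications with the specific values $\rho=\frac{\lambda}{m+1}$, $Tr(P)=2m\rho$ and $|P|^2=2m^2\rho^2$ without sign or factor errors. Since all the non-derivative, non-curvature terms are constant multiples of $u$, the final matching amounts to an elementary polynomial identity in $m$ and $\rho$, which makes it a reliable consistency check on the computation.
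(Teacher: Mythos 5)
Your proposal is correct and takes essentially the same route as the paper's own proof: the paper likewise specializes Lemma \ref{lemKA3} to $n=4$ with $\rho=\frac{\lambda}{m+1}$, $Tr(P)=2m\rho$ and $|P|^2=2m^2\rho^2$, derives $\nabla_i P_{sj}\nabla_s u=-\frac{u}{m}\bigl(P_{is}P_{sj}-(\lambda-\rho)P_{ij}\bigr)$ by differentiating $P(\nabla u)=0$ and inserting \eqref{mqE}, and exploits exactly the $Tr(P^4)$ cancellation you identify before collecting coefficients. All your intermediate quantities, including the reduction of the leftover $|P|^2$, $Tr(P)$ and constant terms to $12\rho^4m^2(m+1)u$, agree with the paper's computation.
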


\begin{proof}
Initially, let $\mu_i$ be the eigenvalues of $P$ defined in \eqref{tensorP} with respect to the adapted orthonormal frame $\{e_i\}_{i=1}^{4}$ so that $e_1=-\frac{\nabla u}{|\nabla u|}.$ In particular, it follows from \eqref{eq-ridu} that $\mu_1=0.$ Consequently,
\begin{eqnarray*}
Tr(P)=\mu_2+\mu_3+\mu_4\,\,\,\,\,\,\,\,\,\hbox{and}\,\,\,\,\,\,\,\,\,\,\,|P|^2=\mu_{2}^{2}+\mu_{3}^{2}+\mu_{4}^{2},
\end{eqnarray*} where $P=Ric-\frac{3\lambda-R}{m-1}g.$ Thus, for $R=\frac{2(m+2)}{m+1}\lambda$, it follows from \eqref{tensorP} that 
  \begin{eqnarray}
  \label{trp}
     Tr(P)&=&\frac{(m+n-1)R-n(n-1)\lambda}{m-1}= \frac{(m+3)R-12\lambda}{m-1}=\frac{2m}{m+1}\lambda,
  \end{eqnarray} which implies that $Tr(P)$ is a positive constant.

Next, by Proposition 3.3 in \cite{Petersen-Chenxu}, one has $|P|^2=\left(\lambda-\rho\right)Tr(P),$ where $\rho=\frac{3\lambda-R}{m-1}.$ This combined with  \eqref{trp} yields

\begin{eqnarray}
\label{np}
    |P|^2&=&\frac{(m-4)\lambda+R}{m-1}\,Tr(P)=\frac{m}{m+1}\lambda\, Tr(P)=\frac{1}{2}(Tr(P))^2.
\end{eqnarray}

On the other hand, by simplifying the last three terms in the right hand side of Lemma \ref{lemKA3}, taking into account that $\rho=\frac{\lambda}{m+1},$ $Tr\,(P)=\frac{2m}{m+1}\lambda,$ $2|P|^2=(Tr\,(P))^2$ and $n=4,$ one deduces that

\begin{eqnarray}
\label{plm1az}
    u\Delta Tr(P^3) &=& 3(m+1)\left(\nabla_i P_{sj}P_{jl}P_{il}\nabla_s u+2\rho\nabla_i P_{sj}P_{ij}\nabla_s u\right)\nonumber\\
    & &+6u\left(\nabla_s P_{ij}\nabla_s P_{jl}P_{il}+\rho\nabla_s P_{ij}\nabla_s P_{ij}\right)\nonumber\\
    & &+6u\left(P_{ds}R_{dijs}P_{jl}P_{il}+2\rho P_{ds}R_{dijs}P_{ij}\right)- (m+2)\nabla_s(Tr(P^3))\nabla_s u\nonumber\\
    & &+\frac{3(m+1)u}{m}Tr(P^4)+\frac{3u}{m}\left(3(m+1)\rho+(m-1)\lambda\right)Tr(P^3)\nonumber\\
    & &+12\rho^4 m^2 (m+1)u.
\end{eqnarray} At the same time, since $P_{sj}\nabla_s u=P(\nabla u)=0,$ we have from (\ref{mqE}) that 

\begin{eqnarray*}
0&=&\nabla_{i}(P_{sj}\nabla_s u)= \nabla_{i}P_{sj}\nabla_{s}u+\frac{u}{m}P_{sj}(R_{is}-\lambda g_{is})\nonumber\\&=&\nabla_{i}P_{sj}\nabla_{s}u+\frac{u}{m}P_{sj}P_{is} -\frac{(\lambda-\rho)}{m}uP_{ij}
\end{eqnarray*} so that 

\begin{equation} 
\label{eq8901a}
\nabla_{i}P_{sj}\nabla_{s}u=-\frac{u}{m}P_{sj}P_{is} +\frac{(\lambda-\rho)}{m}uP_{ij}.
\end{equation} Hence, the first term in the right hand side of (\ref{plm1az}) becomes

\begin{eqnarray*}
\Lambda&=&3(m+1)\left(\nabla_i P_{sj}P_{jl}P_{il}\nabla_s u+2\rho\nabla_i P_{sj}P_{ij}\nabla_s u\right)\nonumber\\
&=& 3(m+1)\left(-\frac{u}{m}P_{sj}P_{is}  + \frac{(\lambda-\rho)}{m}u P_{ij}\right)\left(P_{jl}P_{il}+2\rho P_{ij}\right)\nonumber\\&=& 3(m+1)\left( -\frac{u}{m}(Tr(P^4))+ \frac{(\lambda-3\rho)}{m}u\, Tr(P^3)+2\frac{\rho(\lambda-\rho)}{m}u|P|^2\right).
\end{eqnarray*} Substituting this into (\ref{plm1az}) and rearranging terms, one sees that

\begin{eqnarray}
u\Delta Tr(P^3)+(m+2)\langle \nabla(Tr(P^3)),\,\nabla u\rangle&=& 6u\lambda Tr(P^3) + 6\frac{\lambda^2}{m+1}u |P|^2 \nonumber\\&&+6u\left(\nabla_s P_{ij}\nabla_s P_{jl}P_{il}+\rho\nabla_s P_{ij}\nabla_s P_{ij}\right)\nonumber\\
    &&+6u\left(P_{ds}R_{dijs}P_{jl}P_{il}+2\rho P_{ds}R_{dijs}P_{ij}\right)\nonumber\\& &+12\rho^4 m^2 (m+1)u.
\end{eqnarray} This concludes the proof of the proposition. 
\end{proof}

\vspace{0.30cm}

In order to proceed, we need to prove the following result.

\begin{proposition}
\label{propKK2}
    Let $(M^4,\,g,\,u,\,\lambda)$ be an $m$-quasi-Einstein manifold with $m>1$ and constant scalar curvature $R=\frac{2(m+2)\lambda}{m+1}.$ Then we have:
    \begin{eqnarray}
    \label{idP1}
        u L_{m+2}(Tr(P^3))&=&8(m+1)\rho u Tr(P^3)+6u\nabla_s P_{ij}\nabla_s P_{jl}P_{il}- 3m\rho u|\nabla P|^2\nonumber\\
        & &-16m^3(m+1)\rho^4 u
    \end{eqnarray}
    and
    \begin{eqnarray}
    \label{idP2}
        u L_{m+2}(Tr(P^3))&\geq&  8(m+1)\rho u Tr(P^3)- 3m\rho u|\nabla P|^2\nonumber\\
        & &-16m^3(m+1)\rho^4 u,
    \end{eqnarray}
    where $u L_{m+2}(f)=u\Delta f+(m+2)\langle\nabla f,\nabla u\rangle$ and $\rho=\frac{\lambda}{m+1}$.
\end{proposition}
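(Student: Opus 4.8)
The plan is to take Proposition \ref{propKK1} as the starting point, since its left-hand side is already $uL_{m+2}(Tr(P^3))$; what remains is to evaluate the two curvature contractions $P_{ds}R_{dijs}P_{jl}P_{il}$ and $P_{ds}R_{dijs}P_{ij}$ on its right-hand side and to reconcile the purely algebraic remainder with the coefficients in \eqref{idP1}. Throughout I would fix the normalizations forced by $R=\frac{2(m+2)}{m+1}\lambda$: namely $\rho=\frac{\lambda}{m+1}$, and by \eqref{trp}--\eqref{np} the invariants $Tr(P)=2m\rho$ and $|P|^2=\frac12(Tr(P))^2=2m^2\rho^2$ are constant, while $P(\nabla u)=0$ makes the $\nabla u$-eigenvalue $\mu_1$ vanish.

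To evaluate the curvature terms I would use the four-dimensional splitting $R_{dijs}=W_{dijs}+\frac12(A\odot g)_{dijs}$ with Schouten tensor $A=Ric-\frac{R}{6}g=P+ag$, $a=\rho-\frac{R}{6}$. Contracting the Kulkarni--Nomizu part against $P\otimes P$ and against $(P^2)\otimes P$ produces a purely algebraic expression in $Tr(P^3),Tr(P^2A),Tr(PA),Tr(P),|P|^2$, which collapses to a multiple of $Tr(P^3)$ plus a multiple of $\rho^4$ once $|P|^2=\frac12(Tr(P))^2$ and $Tr(P)=2m\rho$ are inserted. The genuinely curvature-dependent pieces are the Weyl contractions $P_{ds}W_{dijs}P_{ij}$ and $P_{ds}W_{dijs}P_{jl}P_{il}$, and these are the source of the derivative term $|\nabla P|^2$ that makes \eqref{idP1} differ from Proposition \ref{propKK1} (whose $|\nabla P|^2$-coefficient is $+6\rho$, against $-3m\rho$ here, a discrepancy of $-3(m+2)\rho$ that must be manufactured from curvature).

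The device to produce this $|\nabla P|^2$ would be to trade a curvature index for a $\nabla u$-contraction rather than to compute intrinsic sectional curvatures directly. Using the Hessian equation $\nabla_d\nabla_s u=\frac{u}{m}(P_{ds}+(\rho-\lambda)g_{ds})$ to replace one factor of $P$ in the contraction, the $g$-part contributes algebraic $Ric$-traces, while the Hessian part is handled pointwise through $\nabla_d(\nabla_s u\,M_{ds})-\nabla_s u\,\nabla_d M_{ds}$, where $M_{ds}$ is the remaining curvature--$P$ contraction. On $\nabla_d M_{ds}$ I would invoke the first contracted second Bianchi identity $\nabla_d R_{dijs}=\nabla_j P_{is}-\nabla_s P_{ij}$ (valid since $R$ is constant) and the explicit value of $\nabla_s u\,R_{dijs}$ furnished by assertion (4) of Lemma \ref{lemmafund} (equivalently by Lemma \ref{l1}); the relation \eqref{eq8901a}, which evaluates $\nabla_a P_{bs}\nabla_s u$ algebraically, and the constancy of $|P|^2$ and $Tr(P)$ (so $\nabla_s u\,\nabla_s|P|^2=0$) then dispatch every mixed $\nabla P\cdot\nabla u$ product. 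What survives is a combination of $|\nabla P|^2$, the twisted gradient $\nabla_i P_{jk}\nabla_j P_{ik}$ (equivalently $|C|^2$), and $\nabla_s P_{ij}\nabla_s P_{jl}P_{il}$, which I would reorganize by means of $C_{ijk}=\nabla_i P_{jk}-\nabla_j P_{ik}$ into precisely the two derivative terms of \eqref{idP1}; substituting the numerical values of $\rho,Tr(P),|P|^2,R$ then pins down the coefficients $8(m+1)\rho$, $-3m\rho$ and $-16m^3(m+1)\rho^4$. I expect the main obstacle to be exactly this conversion and its bookkeeping: tracking the curvature-generated $|\nabla P|^2$ and verifying that the spurious $|C|^2$ and twisted-gradient contributions from the two contractions cancel, since a single sign or factor slip there corrupts every coefficient in the final identity.

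The inequality \eqref{idP2} would then follow from \eqref{idP1} immediately. Writing $\nabla_s P_{ij}\nabla_s P_{jl}P_{il}=\sum_s Tr\big((\nabla_s P)^2P\big)$, each summand is the trace of a product of two positive semidefinite symmetric matrices, hence nonnegative, provided $P\succeq 0$. But $P\succeq 0$ is forced by the eigenvalue data: the three nonzero eigenvalues satisfy $\mu_2+\mu_3+\mu_4=2m\rho>0$ and $\mu_2^2+\mu_3^2+\mu_4^2=\frac12(\mu_2+\mu_3+\mu_4)^2$, and if one of them were negative, applying $\mu_3^2+\mu_4^2\ge\frac12(\mu_3+\mu_4)^2$ to the remaining two would yield a contradiction. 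Since $u>0$ in the interior, the term $6u\nabla_s P_{ij}\nabla_s P_{jl}P_{il}$ is nonnegative, and discarding it converts \eqref{idP1} into \eqref{idP2}.
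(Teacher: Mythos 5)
Your handling of \eqref{idP2} is correct and complete: the eigenvalue positivity $\mu_\alpha\geq 0$ follows exactly as you say from $Tr(P)=2m\rho>0$ together with $|P|^2=\frac{1}{2}(Tr(P))^2$ (your two-variable Cauchy--Schwarz contradiction is precisely the content of the paper's Lemma \ref{lemsum}), and writing $\nabla_s P_{ij}\nabla_s P_{jl}P_{il}=\sum_s Tr\bigl((\nabla_s P)^2 P\bigr)\geq 0$ as a trace of products of positive semidefinite matrices is, if anything, a cleaner justification than the paper's.

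The gap is in the derivation of \eqref{idP1}, and it is genuine: your plan never closes the evaluation of the curvature contractions, and one of its supporting claims is false. First, the assertion that \eqref{eq8901a} together with the constancy of $|P|^2$ and $Tr(P)$ ``dispatches every mixed $\nabla P\cdot\nabla u$ product'' fails for the cubic term: \eqref{eq8901a} controls $\nabla_i P_{sj}\nabla_s u$ (with $\nabla u$ hitting a tensor index of $P$), whereas your divergence/Bianchi device also generates $\nabla_s u\,\nabla_s P_{ij}(P^2)_{ij}=\frac{1}{3}\langle\nabla u,\nabla Tr(P^3)\rangle$, which does not vanish since $Tr(P^3)$ is not constant, as well as $Tr(P^4)$ and twisted gradients of the type $\nabla_i P_{dj}\nabla_d P_{ij}$ and $\nabla_i P_{dj}\nabla_d(P^2)_{ij}$ --- none of which appear in \eqref{idP1}. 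Their cancellation is asserted, not shown; eliminating $Tr(P^4)$ alone requires the Newton-identity reduction $Tr(P^4)=\frac{8m\rho}{3}Tr(P^3)-\frac{10m^4\rho^4}{3}$ (valid because $\mu_1=0$ and two symmetric functions are pinned), an ingredient your sketch never invokes. Second, and more structurally, your Weyl decomposition leaves \emph{two} independent unknowns, $P_{ds}W_{dijs}P_{ij}$ and $P_{ds}W_{dijs}P_{jl}P_{il}$, while there is only one differential input available, namely the Bochner-type identity obtained from Lemma \ref{lemKA1}: $uL_{m+2}(P_{ik})=2(m+1)\rho u P_{ik}+2uR_{jiks}P_{js}$, which combined with $L_{m+2}(|P|^2)=0$ gives
\begin{equation*}
uP_{ik}R_{jiks}P_{js}=-\frac{u}{2}|\nabla P|^2-(m+1)\rho u|P|^2.
\end{equation*}
What rescues the paper is the step your plan has no substitute for: diagonalizing $P$ at a point with $\mu_1=0$, the \emph{specific combination} occurring in Proposition \ref{propKK1}, $P_{ds}R_{dijs}P_{jl}P_{il}+2\rho\,P_{ds}R_{dijs}P_{ij}$, collapses to $-2(m+2)\rho\,\Sigma+(m+1)\rho\,\mu_2\mu_3\mu_4$ with the single scalar unknown $\Sigma=K_{23}\mu_2\mu_3+K_{24}\mu_2\mu_4+K_{34}\mu_3\mu_4$, because $K_{23}+K_{24}+K_{34}=(m+1)\rho$ (a consequence of $Ric(\nabla u)=\rho\nabla u$ and $Tr(P)=2m\rho$); the Bochner identity then evaluates $\Sigma=\frac{|\nabla P|^2}{4}+m^2(m+1)\rho^3$, and $3\mu_2\mu_3\mu_4=Tr(P^3)-2m^3\rho^3$ converts the remainder, yielding exactly the coefficients $8(m+1)\rho$, $-3m\rho$, $-16m^3(m+1)\rho^4$. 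Since the identity \eqref{idP1} is true, a fully executed version of your device would have to reproduce it, but as written the proposal neither demonstrates the required cancellations nor supplies the closure mechanism, so the proof of \eqref{idP1} does not go through.
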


\begin{proof} First of all, observe that our assumption is equivalent to $R=2(m+2)\rho,$ where $\rho=\frac{\lambda}{m+1}.$ Moreover, one sees that
    \begin{eqnarray}
    \label{eqkmb396pi}
    Tr(P)=2m\rho\;\;\;\; \mathrm{and}\;\;\;\; |P|^2=2m^2\rho^2=\frac{1}{2}(Tr(P))^2.    
    \end{eqnarray} Now, we need to compute $u L_{m+2}(|P|^2).$ To this end, since $Ric=P+\rho g,$ we notice from Lemma \ref{lemKA1} that
    \begin{eqnarray*}
        u(\Delta P_{ik})&=&\nabla_i P_{sk}\nabla_s u+m\nabla_k P_{is}\nabla_s u+\frac{m+1}{m}u(P_{is}+\rho g_{is})(P_{sk}+\rho g_{sk})\\
        & &+2uR_{jiks}(P_{js}+\rho g_{js})-(m+2)\nabla_s P_{ik}\nabla_s u\\
        & &+\frac{u}{m}(m-1)(m+2)\rho(P_{ik}+\rho g_{ik})-\frac{u}{m}(m-1)(m+1)\rho^2 g_{ik},
    \end{eqnarray*} where we have used that $n=4,$ $R-(m+n-2)\lambda=-(m-1)(m+2)\rho$ and $\lambda(R-(n-1)\lambda)=-(m-1)(m+1)\rho^2.$ Next, expanding the expression in the right hand side and rearranging terms, we have
    \begin{eqnarray}\nonumber
        u L_{m+2}(P_{ik})&=&\nabla_i P_{sk}\nabla_s u+m\nabla_k P_{is}\nabla_s u+\frac{m+1}{m}u P_{ik}^{2}+\frac{2(m+1)\rho u}{m}P_{ik}\\\nonumber
        & &+\frac{(m+1)\rho^2 u}{m}g_{ik}+2u R_{jiks}P_{js}-2\rho u P_{ik}-2\rho^2 u g_{ik}\\\nonumber
        & &+\frac{(m-1)(m+2)\rho u}{m}P_{ik}+\frac{(m-1)\rho^2 u}{m}g_{ik}\\\label{eqi1}
        &=&\nabla_i P_{sk}\nabla_s u+m\nabla_k P_{is}\nabla_s u+\frac{m+1}{m}u P_{ik}^{2}+(m+1)\rho u P_{ik}+2u R_{jiks}P_{js}.
    \end{eqnarray}

Proceeding, we use that $\lambda=(m+1)\rho$ and Eq. (\ref{eq8901a}) to infer 
    \begin{eqnarray*}
      \nabla_i P_{sk}\nabla_s u=-\frac{u}{m}(P^{2}_{ik}-m\rho P_{ik}).
    \end{eqnarray*}
    Consequently,
    \begin{eqnarray*}
        \nabla_i P_{sk}\nabla_s u+m\nabla_k P_{is}\nabla_s u=-\frac{(m+1)u}{m}(P_{ik}^{2}-m\rho P_{ik}).
    \end{eqnarray*}
    This allow us to rewrite \eqref{eqi1} as
    \begin{eqnarray*}
        u L_{m+2}(P_{ik})&=&-\frac{(m+1)u}{m}P_{ik}^{2}+(m+1)\rho u P_{ik}+\frac{(m+1)u}{m}P_{ik}^{2}+(m+1)\rho u P_{ik}+2 u R_{jiks}P_{js}\\
        &=&2(m+1)\rho u P_{ik}+2u R_{jiks}P_{js}.
    \end{eqnarray*}

 At the same time, by using that $u L_{m+2}(P_{ik})=u\Delta P_{ik}+(m+2)\langle\nabla P_{ik},\nabla u\rangle,$ we infer
    \begin{eqnarray*}
        u L_{m+2}(|P|^2)&=&u L_{m+2}(P_{ik}P_{ik})\\
        &=&u\Delta (P_{ik}P_{ik})+(m+2)\langle\nabla(P_{ik}P_{ik}),\nabla u\rangle\\
        &=&u(2P_{ik}\Delta P_{ik}+2|\nabla P|^2)+2(m+2)P_{ik}\langle\nabla P_{ik},\nabla u\rangle\\
        &=&2u|\nabla P|^2+2u P_{ik}L_{m+2}(P_{ik})\\
        &=&2u|\nabla P|^2+4(m+1)\rho u |P|^2+4uP_{ik}R_{jiks}P_{js}.
    \end{eqnarray*} Besides, since $|P|^2$ is constant, then $uL_{m+2}(|P|^2)=0$ and hence, we have
    \begin{eqnarray}\label{eqi2}
        uP_{ik}R_{jiks}P_{js}=-\frac{u}{2}|\nabla P|^2-(m+1)\rho u|P|^2.
    \end{eqnarray}

 On the other hand, it follows from (\ref{eqpk1p}) that
    \begin{eqnarray}\nonumber
        u L_{m+2}(Tr(P^3))&=&6(m+1)\rho u Tr(P^3)+6(m+1)\rho^2 u|P|^2\\\nonumber
        & &+6u(\nabla_s P_{ij}\nabla_s P_{jl} P_{il}+\rho\nabla_s P_{ij}\nabla_s P_{ij})\\\nonumber
        & &+6u(P_{ds}R_{dijs}P_{jl}P_{il}+2\rho P_{ds}R_{dijs}P_{ij})\\\label{eqii}
        & &+12m^2 (m+1)\rho^4 u.
    \end{eqnarray} To proceed, we need to deal with the terms that depend of the Riemannian curvature. Thereby, fix a point $p\in M$ and assume $P_{ij}=\mu_i \delta_{ij}$ at $p,$ that is, $\mu_i,$ $i=1,2,3,4$ are the eigenvalues of the tensor $P$ at $p$ and recall that $\mu_{1}=0.$ Hence, one easily verifies that
    \begin{eqnarray*}
        P_{ds}R_{dijs}P_{jl}P_{il}=\sum_{j=2}^{4}\sum_{d=2}^{4}\mu_d R_{djjd}\mu_{j}^{2}.
    \end{eqnarray*}
    Denoting $K_{dj}=R_{djdj}$, it follows that
    \begin{eqnarray}\nonumber
        P_{ds}R_{dijs}P_{jl}P_{il}&=&-\mu_2 K_{23}\mu_{3}^{2}-\mu_2 K_{24}\mu_{4}^{2}-\mu_3 K_{32}\mu_{2}^{2}-\mu_3 K_{34}\mu_{4}^{2}-\mu_4 K_{42}\mu_{2}^{2}-\mu_4 K_{43}\mu_{3}^{2}\\\nonumber
        &=&-K_{23}\mu_2\mu_3(\mu_3+\mu_2)-K_{24}\mu_2\mu_4(\mu_2+\mu_4)-K_{34}\mu_3\mu_4(\mu_3+\mu_4)\\\nonumber
        &=&-K_{23}\mu_2\mu_3(Tr(P)-\mu_4)-K_{24}\mu_2\mu_4(Tr(P)-\mu_3)-K_{43}\mu_4\mu_3(Tr(P)-\mu_2)\\\label{eqi3}
        &=&-Tr(P)(K_{23}\mu_2\mu_3+K_{34}\mu_3\mu_4+K_{24}\mu_2\mu_4)+(K_{23}+K_{34}+K_{24})\mu_2\mu_3\mu_4.
    \end{eqnarray} Moreover, notice that 
    \begin{eqnarray*}
        R_{22}+R_{33}+R_{44}=R-R_{11}=R-\rho=Tr(P)+3\rho
    \end{eqnarray*}
    and
    \begin{eqnarray*}
        K_{12}+K_{13}+K_{14}=R_{11}=\rho,
    \end{eqnarray*}
    which therefore implies that 
    \begin{eqnarray*}
        Tr(P)+3\rho=R-R_{11}=R_{22}+R_{33}+R_{44}=2(K_{23}+K_{34}+K_{24})+R_{11}.
    \end{eqnarray*} Besides, $K_{23}+K_{34}+K_{24}=\frac{1}{2}(Tr(P)+2\rho)=(m+1)\rho.$ In view of this, we may rewrite \eqref{eqi3} as 
    \begin{eqnarray*}
        P_{ds}R_{dijs}P_{jl}P_{il}=-2m\rho(K_{23}\mu_2\mu_3+K_{34}\mu_3\mu_4+K_{24}\mu_2\mu_4)+(m+1)\rho\mu_2\mu_3\mu_4.
    \end{eqnarray*} Similarly, one easily verifies that
    \begin{eqnarray}
    \label{eqiii}
        P_{ds}R_{dijs}P_{ij}&=&\sum_{d=2}^{4}\sum_{j=2}^{4}\mu_d R_{djjd}\mu_j =-2(K_{23}\mu_2\mu_3+K_{24}\mu_2\mu_4+K_{34}\mu_3\mu_4).
    \end{eqnarray}
    Hence, Eq. \eqref{eqii} becomes
    \begin{eqnarray*}
        u L_{m+2}(Tr(P^3))&=&6(m+1)\rho u Tr(P^3)+6(m+1)\rho^2 u|P|^2+6u(\nabla_s P_{ij}\nabla_s P_{jl}P_{il}+\rho |\nabla P|^2)\\
        & &-12(m+2)\rho u(K_{23}\mu_2\mu_3+K_{24}\mu_2\mu_4+K_{34}\mu_3\mu_4)+6(m+1)\rho u\mu_2\mu_3\mu_4\\
        & &+12m^2 (m+1)\rho^4 u\\
        &=&6(m+1)\rho u Tr(P^3)+12m^2(m+1)\rho^4 u+6u(\nabla_s P_{ij}\nabla_s P_{jl}P_{il}+\rho |\nabla P|^2)\\
        & &-12(m+2)\rho u(K_{23}\mu_2\mu_3+K_{24}\mu_2\mu_4+K_{34}\mu_3\mu_4)+6(m+1)\rho u\mu_2\mu_3\mu_4\\
        & &+12m^2 (m+1)\rho^4 u\\
        &=&6(m+1)\rho u Tr(P^3)+6u(\nabla_s P_{ij}\nabla_s P_{jl}P_{il}+\rho |\nabla P|^2)\\
        & &-12(m+2)\rho u(K_{23}\mu_2\mu_3+K_{24}\mu_2\mu_4+K_{34}\mu_3\mu_4)\\
        & &+6(m+1)\rho u\mu_2\mu_3\mu_4+24 m^2 (m+1)\rho^4 u,
    \end{eqnarray*} where we used that $|P|^2 =2m^2 \rho^2.$ Moreover, by combining \eqref{eqi2} and \eqref{eqiii}, we arrive at
    \begin{eqnarray*}
        u(K_{23}\mu_2\mu_3+K_{24}\mu_2\mu_4+K_{34}\mu_3\mu_4)=\frac{u|\nabla P|^2}{4}+m^2(m+1)\rho^3u.
    \end{eqnarray*} Consequently,
    
    \begin{eqnarray}
    \label{klmcx1}
        u L_{m+2}(Tr(P^3))&=&6(m+1)\rho u Tr(P^3)+6u(\nabla_s P_{ij}\nabla_s P_{jl}P_{il}+\rho |\nabla P|^2)\nonumber\\
        & &-3(m+2)\rho u|\nabla P|^2-12m^2(m+2)(m+1)\rho^4 u\nonumber\\
        & &+6(m+1)\rho u\mu_2\mu_3\mu_4+24 m^2 (m+1)\rho^4 u\nonumber\\
        &=&6(m+1)\rho u Tr(P^3)+6u\nabla_s P_{ij}\nabla_s P_{jl}P_{il}-3m\rho u|\nabla P|^2\nonumber\\
        & &+6(m+1)\rho u\mu_2\mu_3\mu_4-12m^3 (m+1)\rho^4 u.
    \end{eqnarray}

    At the same time, similar to \cite[pg. 264]{ChengZhou}, by letting $\alpha=\mu_2,$ $\beta=\mu_3$ and $\kappa=\mu_4$ in the following algebraic identity
\begin{eqnarray*}
    (\alpha+\beta+\kappa)^3=3(\alpha+\beta+\kappa)(\alpha^2+\beta^2+\kappa^2)-2(\alpha^3+\beta^3+\kappa^3)+6\alpha \beta \kappa,
\end{eqnarray*} we obtain
    \begin{eqnarray*}
        (Tr(P))^3=3|P|^2 Tr(P)-2Tr(P^3)+6\mu_2\mu_3\mu_4.
    \end{eqnarray*} Of which, 
    \begin{equation}
    \label{eqTYU8}
    3\mu_2\mu_3\mu_4=Tr(P^3)-2m^3\rho^3.
    \end{equation} This substituted into (\ref{klmcx1}) yields
    \begin{eqnarray*}
        u L_{m+2}(Tr(P^3))&=&8(m+1)\rho u Tr(P^3)+6u\nabla_s P_{ij}\nabla_s P_{jl}P_{il}-3m\rho u|\nabla P|^2\\
        & &-16m^3 (m+1)\rho^4 u,
    \end{eqnarray*} which proves \eqref{idP1}.

    Finally, for the fixed orthonormal frame, by using (\ref{eqkmb396pi}) and Lemma \ref{lemsum}, one deduces that $\mu_i\geq 0,$ for all $i.$ Hence, $\nabla_s P_{ij}\nabla_s P_{jl}P_{il}=|\nabla P_{ii}|^2\mu_i\geq 0$ and this proves the second assertion \eqref{idP2}. 
\end{proof}

We now establish the following essential lemma, which yields a key inequality involving the nonnegative function $|\nabla u|^2(Tr(P^3)-2m^3\rho^3).$ As mentioned earlier, this result plays a crucial role in the proof of Theorem~\ref{theodim4}.

\begin{lemma}
\label{lemKAp}
    Let $(M^4,\,g,\,u,\,\lambda)$ be an $m$-quasi-Einstein manifold with $m>1$ and constant scalar curvature $R=\frac{2(m+2)\lambda}{m+1}.$ Then the following inequality holds
    \begin{equation*}
        L_{m+2}(|\nabla u|^2\left(Tr(P^3)-2m^3\rho^3)\right)\geq 2 (9m+7)\rho |\nabla u|^2\left(Tr(P^3)-2m^3\rho^3\right),
    \end{equation*} where $\rho=\frac{\lambda}{m+1}.$
\end{lemma}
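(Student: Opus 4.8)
The plan is to apply the Leibniz rule for the operator $uL_{m+2}$ to the product $F:=|\nabla u|^{2}\,\phi$, where $\phi:=Tr(P^{3})-2m^{3}\rho^{3}$ and $\rho=\frac{\lambda}{m+1}$. Writing $v:=|\nabla u|^{2}$ and using $uL_{m+2}(fg)=f\,uL_{m+2}(g)+g\,uL_{m+2}(f)+2u\langle\nabla f,\nabla g\rangle$, one gets
\begin{equation*}
uL_{m+2}(F)=v\,uL_{m+2}(\phi)+\phi\,uL_{m+2}(v)+2u\langle\nabla v,\nabla\phi\rangle.
\end{equation*}
First I would assemble the three ingredients. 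Since $\phi$ differs from $Tr(P^{3})$ by a constant, identity \eqref{idP1} applies directly and, after rewriting $8(m+1)\rho u\,Tr(P^{3})-16m^{3}(m+1)\rho^{4}u=8(m+1)\rho u\,\phi$, gives $uL_{m+2}(\phi)=8(m+1)\rho u\,\phi+6u\,\nabla_{s}P_{ij}\nabla_{s}P_{jl}P_{il}-3m\rho u|\nabla P|^{2}$; by \eqref{eqkmb396pi} and Lemma \ref{lemsum} all eigenvalues $\mu_{i}$ of $P$ are nonnegative, so the cubic gradient term is $\ge 0$. For $uL_{m+2}(v)$ I would use that \eqref{transnormal}, specialized to $R=\frac{2(m+2)\lambda}{m+1}$ and $n=4$, collapses to $v=\frac{\mu}{m-1}-\rho u^{2}$ (the coefficient of $u^{2}$ simplifies to exactly $\rho$), together with $\Delta u=-2\rho u$ from \eqref{eq-laplacian}; this yields the curvature-free expression $uL_{m+2}(v)=-2(m+3)\rho u\,v+4\rho^{2}u^{3}$ and $\nabla v=-2\rho u\,\nabla u$, whence $2u\langle\nabla v,\nabla\phi\rangle=-4\rho u^{2}\langle\nabla u,\nabla Tr(P^{3})\rangle$.

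Next I would evaluate the cross term $\langle\nabla u,\nabla Tr(P^{3})\rangle=3(P^{2})_{ij}\nabla_{s}P_{ij}\nabla_{s}u$. Commuting the derivative onto $\nabla u$ by means of Proposition \ref{prop-p} and \eqref{eq8901a} produces, besides the algebraic terms $\tfrac{3u}{m}(\lambda-\rho)Tr(P^{3})-\tfrac{3u}{m}Tr(P^{4})$, the curvature contraction $\tfrac{3m}{u}(P^{2})_{ij}Q_{sijr}\nabla_{s}u\,\nabla_{r}u$. Expanding $Q$ through \eqref{tensorQ} in the eigenframe $\{e_{i}\}$ with $e_{1}=-\nabla u/|\nabla u|$ (so $\mu_{1}=0$), the Kulkarni--Nomizu pieces contribute $-\tfrac1m Tr(P^{3})+\rho|P|^{2}$ while the Riemann piece reduces to the radial sectional curvatures, giving
\begin{equation*}
(P^{2})_{ij}Q_{sijr}\nabla_{s}u\,\nabla_{r}u=v\Big(-\sum_{l\ge 2}\mu_{l}^{2}K_{1l}-\tfrac1m Tr(P^{3})+\rho|P|^{2}\Big).
\end{equation*}
Substituting the normalizations $Tr(P)=2m\rho$, $|P|^{2}=2m^{2}\rho^{2}$ from \eqref{eqkmb396pi} and $Tr(P^{3})=\phi+2m^{3}\rho^{3}$, and using Newton's identities to express $Tr(P^{4})$ through $\phi$, the algebraic and $u^{3}$ contributions collect into a multiple of $\rho u v\,\phi$. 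I expect the $Tr(P^{3})$ part of the radial curvature to supply precisely the extra $12m\,\rho u v\,\phi$ that lifts the coefficient from the ``naive'' $2(3m+7)$ to the claimed $2(9m+7)$.

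The hard part will be disposing of the radial sectional curvatures $\sum_{l\ge2}\mu_{l}^{2}K_{1l}$ and the quadratic gradient term, since the first-order relations alone determine the cross term only tautologically. These radial curvatures are not among the fiber curvatures $K_{23},K_{24},K_{34}$ that \eqref{eqi2}--\eqref{eqiii} control directly; the plan is to trade them for fiber data through the trace relations $R_{11}=\rho$, $K_{12}+K_{13}+K_{14}=\rho$ and $R_{ll}=\mu_{l}+\rho$, and then to eliminate the resulting curvature in favor of $|\nabla P|^{2}$ by invoking \eqref{eqi2} together with a suitable contraction of the tensorial identity \eqref{eqi1}. The purpose of this bookkeeping is twofold: to cancel the unfavorable term $-3m\rho u v|\nabla P|^{2}$ inherited from $uL_{m+2}(\phi)$, and to clear the spurious constant and $u^{3}$ remainders. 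Finally, the manifestly nonnegative leftovers, namely $6uv\,\nabla_{s}P_{ij}\nabla_{s}P_{jl}P_{il}\ge0$ and the cubic $\mu_{2}\mu_{3}\mu_{4}=\phi/3\ge0$ (via \eqref{eqTYU8}), are discarded in the favorable direction, yielding the stated inequality. I expect essentially all the difficulty to reside in this curvature-absorption step rather than in the Leibniz expansion itself.
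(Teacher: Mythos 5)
Your overall skeleton is sound and matches the paper's bookkeeping: the Leibniz expansion of $uL_{m+2}$ applied to $|\nabla u|^{2}\phi$, the transnormal identities $v=\rho(u_{max}^{2}-u^{2})$, $\nabla v=-2\rho u\nabla u$, $\Delta u=-2\rho u$ (so $uL_{m+2}(v)=-2(m+3)\rho uv+4\rho^{2}u^{3}$), the use of \eqref{idP1}, the nonnegativity of the $\mu_{i}$ via Lemma \ref{lemsum}, and the Newton-identity reduction of $Tr(P^{4})$ are all exactly the ingredients the paper uses (the paper organizes the Leibniz step at the very end, in \eqref{lkjn670}, but that is equivalent bookkeeping; in particular the cross term $-4\rho u^{2}\nabla u(Tr(P^{3}))$ is cancelled there against the matching term produced by the curvature computation in \eqref{principalK}, which is precisely why the weight $|\nabla u|^{2}$ works).

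However, there is a genuine gap at the step you yourself flag as the hard part, and the tools you propose for it demonstrably do not suffice. After your expansion, the inequality reduces to controlling $-3m\rho uv|\nabla P|^{2}$ together with the fiber curvature contraction. At a point, the unknowns are the three fiber sectional curvatures $K_{23},K_{24},K_{34}$ (plus $|\nabla P|^{2}$), while your proposed relations give only: the total $K_{23}+K_{24}+K_{34}=(m+1)\rho$ from the trace identities, and the single relation \eqref{eqi2} linking the $\mu_{\alpha}\mu_{\beta}$-weighted sum $K_{23}\mu_2\mu_3+K_{24}\mu_2\mu_4+K_{34}\mu_3\mu_4$ to $|\nabla P|^{2}$ --- one equation in two unknowns. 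Further contractions of \eqref{eqi1} yield nothing new (its full trace is identically zero, and the $P$- and $P^{2}$-contractions are exactly how \eqref{eqi2} and \eqref{idP1} were already obtained), and, as you correctly note, Proposition \ref{prop-p} applied to the cross term is tautological. So your system is underdetermined and the unfavorable $|\nabla P|^{2}$ term cannot be cancelled. The paper's missing ingredient is dimension-specific and geometric: the level sets $\Sigma=u^{-1}(t)$ are \emph{three-dimensional}, so $Rm^{\Sigma}$ is algebraically determined by $Ric^{\Sigma}$; combining this with the Gauss equation --- the second fundamental form being explicit from \eqref{mqE}, namely $h_{\alpha\beta}=\frac{u}{m\sqrt{b(u)}}(P_{\alpha\beta}-m\rho g_{\alpha\beta})$, $H=-\rho u/\sqrt{b(u)}$, $R^{\Sigma}=2(m+1)\rho$ --- and with the radial curvatures $R_{\alpha 1\alpha 1}$ computed from assertion (4) of Lemma \ref{lemmafund}, one obtains an \emph{exact} second formula for $\sum_{\alpha\neq\beta}R_{\alpha\beta\alpha\beta}\mu_{\alpha}\mu_{\beta}$ in terms of $b(u)$, $u$, the power sums $\sum\mu_{\alpha}^{3},\sum\mu_{\alpha}^{4}$ and $\nabla u(Tr(P^{3}))$. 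Only this second equation, paired with \eqref{eqi2}, pins down $|\nabla P|^{2}$ and produces the coefficient $2(9m+7)$. Without supplying it (or an equivalent), your curvature-absorption step cannot be carried out, so the proposal as written does not close.
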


\begin{proof}
We consider the level set $\Sigma=\Sigma(t)=u^{-1}(t),$ $0\leq t< u_{max},$ and an orthonormal frame $\{e_1,e_2,e_3,e_4\}$ for $M^4$ that diagonalizes the tensor $P$ so that $e_1=\frac{\nabla u}{|\nabla u|}$ and $\{e_2,e_3,e_4\}$ is a frame over $\Sigma(t).$ Moreover, we assume $\alpha,\beta,\gamma,\eta\in\{2,3,4\}$ and $i,j,k\in\{1,2,3,4\}$. Thereby, it follows from the Gauss equation that
\begin{eqnarray*}
    R_{\alpha\beta\gamma\eta}^{\Sigma}=R_{\alpha\beta\gamma\eta}+h_{\alpha\gamma}h_{\beta\eta}-h_{\alpha\eta}h_{\beta\gamma},
\end{eqnarray*}
which implies that 
\begin{eqnarray}\label{ric sigma}
    R_{\alpha\gamma}^{\Sigma}=R_{\alpha\gamma}-R_{\alpha 1\gamma 1}+H h_{\alpha\gamma}-h_{\alpha\beta}h_{\beta\gamma},
\end{eqnarray}
where $h$ and $H$ stand for the second fundamental form and the mean curvature, respectively. Besides, taking into account that $\rho=\frac{\lambda}{m+1}$ as well as
\begin{eqnarray*}
     Ric(\nabla u)=\rho\nabla u,\; P=Ric-\rho g,\; R=2(m+2)\rho,\; Tr(P)=2m\rho\;\;\;\;\mathrm{and}\;\;\;\;|P|^2=2m^2\rho^2,    
\end{eqnarray*} one deduces that
\begin{equation}
\label{eq RSigma}
    R^{\Sigma}=R-2\rho+H^2-|A|^2=2(m+1)\rho +H^2-|A|^2,
\end{equation} where $|A|^2$ is the norm of the second fundamental form.

Next, we are going to compute $h_{\alpha\beta}$ and $H.$ Indeed, by using (\ref{mqE}) in terms of $P,$ i.e., $\nabla^2 u=\frac{u}{m}(P-m\rho g)$, the second fundamental form yields
\begin{eqnarray}\label{eq scnd}
    h_{\alpha\beta}=\frac{\nabla_\alpha\nabla_\beta u}{|\nabla u|}=\frac{(P_{\alpha\beta}-m\rho g_{\alpha\beta})}{m\sqrt{b(u)}}u,
\end{eqnarray} where $b(u)=|\nabla u|^2.$ Furthermore, our assumption on the scalar curvature implies that $P_{11}=0$ and hence,
\begin{eqnarray}\label{eq H}
    H=\frac{Tr(P)-3m\rho}{m\sqrt{b(u)}}u=-\frac{\rho u}{\sqrt{b(u)}}.
\end{eqnarray} In particular, we have from \eqref{eq scnd} that
\begin{eqnarray}
\label{eq A}
    |A|^2&=&\frac{|P|^2-2m\rho Tr(P)+3m^2\rho^2}{m^2 b(u)}u^2 =\frac{\rho^2 u^2}{b(u)}.
\end{eqnarray} Substituting \eqref{eq H} and \eqref{eq A} into \eqref{eq RSigma} yields $R^{\Sigma}=2(m+1)\rho.$

Proceeding, we are going to deal with the Riemannian curvature tensor of $\Sigma.$ In fact, since $\Sigma$ has dimension $3,$ its curvature tensor can be expressed as
\begin{eqnarray*}
    R_{\alpha\beta\gamma\eta}^{\Sigma}=(R_{\alpha\gamma}^{\Sigma}g_{\beta\eta}+R_{\beta\eta}^{\Sigma}g_{\alpha\gamma}-R_{\alpha\eta}^{\Sigma}g_{\beta\gamma}-R_{\beta\gamma}^{\Sigma}g_{\alpha\eta})-\frac{R^{\Sigma}}{2}(g_{\alpha\gamma}g_{\beta\eta}-g_{\alpha\eta}g_{\beta\gamma}).
\end{eqnarray*} This jointly with \eqref{ric sigma} gives
\begin{eqnarray*}
    R_{\alpha\beta\alpha\beta}^{\Sigma}&=& R_{\alpha\alpha}^{\Sigma}+R_{\beta\beta}^{\Sigma}-\frac{R^{\Sigma}}{2}\\
    &=&R_{\alpha\alpha}-R_{\alpha 1\alpha 1}+H h_{\alpha\alpha}-h_{\alpha\alpha}^{2}+R_{\beta\beta}-R_{\beta 1\beta 1}+H h_{\beta\beta}-h_{\beta\beta}^{2}-(m+1)\rho\\
    &=&\mu_\alpha+\mu_\beta+2\rho-R_{\alpha 1\alpha 1}-R_{\beta 1 \beta 1}+H(h_{\alpha\alpha}+h_{\beta\beta})-h_{\alpha\alpha}^{2}-h_{\beta\beta}^{2}-(m+1)\rho,
\end{eqnarray*} where $\mu_\alpha=P(e_\alpha)$ and $h_{\alpha \beta}=0$ for $\alpha\neq \beta.$ Consequently, for fixed $\alpha\neq\beta$ again, by using the Gauss equation, Eqs. \eqref{eq scnd} and \eqref{eq H}, we then obtain
\begin{eqnarray*}
    R_{\alpha\beta\alpha\beta}&=&R_{\alpha\beta\alpha\beta}^{\Sigma}-h_{\alpha\alpha}h_{\beta\beta}+h^{2}_{\alpha\beta}\\
    &=&\mu_\alpha+\mu_\beta+2\rho-R_{\alpha 1\alpha 1}-R_{\beta 1 \beta 1}+H(h_{\alpha\alpha}+h_{\beta\beta})-h_{\alpha\alpha}^{2}\nonumber\\&&-h_{\beta\beta}^{2}-(m+1)\rho-h_{\alpha\alpha}h_{\beta\beta}\nonumber\\&=&\mu_\alpha+\mu_\beta+2\rho-R_{\alpha 1\alpha 1}-R_{\beta 1 \beta 1}-\frac{\rho(\mu_\alpha-m\rho+\mu_\beta-m\rho)u^2}{m b(u)}\\
    & &-\frac{(\mu_\alpha-m\rho)^2 u^2}{m^2 b(u)}-\frac{(\mu_\beta-m\rho)^2 u^2}{m^2 b(u)}-(m+1)\rho-\frac{(\mu_\beta-m\rho)(\mu_\alpha-m\rho)u^2}{m^2 b(u)}\\
    &=&\mu_\alpha+\mu_\beta+2\rho-R_{\alpha 1\alpha 1}-R_{\beta 1 \beta 1}-(m+1)\rho-\frac{m\rho(\mu_\alpha+\mu_\beta-2m\rho)u^2}{m^2 b(u)}\\
    & &-\frac{[\mu_{\alpha}^{2}-2m\rho(\mu_\alpha+\mu_\beta)+\mu_{\beta}^{2}+2m^2\rho^2]u^2}{m^2 b(u)}-\frac{[\mu_\beta\mu_\alpha-m\rho(\mu_\alpha+\mu_\beta)+m^2\rho^2]u^2}{m^2 b(u)},
\end{eqnarray*} which can be simplified as
\begin{eqnarray*}
    R_{\alpha\beta\alpha\beta}&=&\mu_{\alpha}+\mu_\beta-\frac{\rho(\mu_\alpha+\mu_\beta)u^2}{m b(u)}+\frac{2\rho(\mu_\alpha+\mu_\beta)u^2}{m b(u)}+\frac{\rho(\mu_\alpha+\mu_\beta)u^2}{m b(u)}\\
    & &+\frac{2\rho^2 u^2}{b(u)}-\frac{2\rho^2 u^2}{b(u)}-\frac{\rho^2 u^2}{b(u)}+2\rho-\frac{(\mu_{\alpha}^{2}+\mu_{\beta}^{2})u^2}{m^2 b(u)}\\
    & &-\frac{\mu_{\alpha}\mu_{\beta}u^2}{m^2 b(u)}-R_{\alpha 1\alpha 1}-R_{\beta 1\beta 1}-(m+1)\rho\\
    &=&\frac{(\mu_\alpha+\mu_\beta)(m b(u)+2\rho u^2)}{m b(u)}+\frac{\rho (2b(u)-\rho u^2)}{b(u)}\\
    & &-\frac{(\mu_{\alpha}^{2}+\mu_{\beta}^{2})u^2}{m^2 b(u)}-\frac{\mu_\alpha\mu_\beta u^2}{m^2 b(u)}-R_{\alpha 1\alpha 1}-R_{\beta 1\beta 1}-(m+1)\rho.
\end{eqnarray*}
Next, multiplying the previous expression by $\mu_\alpha\mu_\beta$ and summing over $\alpha$ and $\beta$, $\alpha\neq\beta$, we deduce that
\begin{eqnarray}\nonumber
    \sum_{\alpha\neq\beta}^{4}R_{\alpha\beta\alpha\beta}\mu_\alpha\mu_\beta&=&\frac{m b(u)+2\rho u^2}{m b(u)}\sum_{\alpha\neq\beta}^{4}(\mu_\alpha+\mu_\beta)\mu_\alpha\mu_\beta+\frac{\rho(2b(u)-\rho u^2)}{b(u)}\sum_{\alpha\neq\beta}^{4}\mu_{\alpha}\mu_{\beta}\\\nonumber
    & &-\frac{2u^2}{m^2 b(u)}\sum_{\alpha\neq\beta}^{4}\mu_{\alpha}^{3}\mu_{\beta}-\frac{u^2}{m^2 b(u)}\sum_{\alpha\neq\beta}^{4}\mu_{\alpha}^{2}\mu_{\beta}^{2}\\\label{Rab}
    & &-2\sum_{\alpha\neq\beta}^{4}R_{\alpha 1\alpha 1}\mu_{\alpha}\mu_{\beta}-(m+1)\rho\sum_{\alpha\neq\beta}^{4}\mu_{\alpha}\mu_{\beta}.
\end{eqnarray} At the same time, we derive an expression for each sum in \eqref{Rab}. To this end, we first observe that

\begin{eqnarray}\label{sums}
 \sum_{\alpha=2}^{4}\mu_\alpha=Tr(P)=2m\rho\;\;\;\;\mathrm{and}\;\;\;\;\sum_{\alpha=2}^{4}\mu_{\alpha}^{2}=|P|^2=2m^2\rho^2,   
\end{eqnarray} which implies that
\begin{eqnarray*}
    \sum_{\alpha\neq\beta}^{4}\mu_\alpha\mu_\beta&=&\sum_{\alpha=2}^{4}\sum_{\beta\neq\alpha}\mu_\alpha\mu_\beta=\sum_{\alpha=2}^{4}\mu_\alpha(Tr(P)-\mu_\alpha)=(Tr(P))^2-|P|^2=2m^2\rho^2,
\end{eqnarray*}
\begin{eqnarray*}
    \sum_{\alpha\neq\beta}^{4}(\mu_\alpha+\mu_\beta)\mu_\alpha\mu_\beta&=&2\sum_{\alpha=2}^{4}\sum_{\beta\neq\alpha}\mu_{\alpha}^{2}\mu_\beta=2\sum_{\alpha=2}^{4}\mu_{\alpha}^{2}(Tr(P)-\mu_\alpha)\\
    &=&2(Tr(P))|P|^2-2\sum_{\alpha=2}^{4}\mu_{\alpha}^{3}=8m^3\rho^3-2\sum_{\alpha=2}^{4}\mu_{\alpha}^{3},
\end{eqnarray*}
\begin{eqnarray*}
    \sum_{\alpha\neq\beta}^{4}\mu_{\alpha}^{3}\mu_\beta&=&\sum_{\alpha=2}^{4}\sum_{\beta\neq\alpha}\mu_{\alpha}^{3}\mu_\beta=\sum_{\alpha=2}^{4}\mu_{\alpha}^{3}(Tr(P)-\mu_\alpha)=\sum_{\alpha=2}^{4}2m\rho\mu_{\alpha}^{3}-\sum_{\alpha=2}^{4}\mu_{\alpha}^{4}
\end{eqnarray*} and
\begin{eqnarray*}
    \sum_{\alpha\neq\beta}^{4}\mu_{\alpha}^{2}\mu_{\beta}^{2}&=&\sum_{\alpha=2}^{4}\sum_{\beta\neq\alpha}\mu_{\alpha}^{2}\mu_{\beta}^{2}=\sum_{\alpha=2}^{4}\mu_{\alpha}^{2}(|P|^2-\mu_{\alpha}^{2})=4m^4\rho^4-\sum_{\alpha=2}^{4}\mu_{\alpha}^{4}.
\end{eqnarray*}
We also need to obtain an expression for $R_{\alpha 1\alpha 1}.$ From Eq. (4) of Lemma \ref{lemmafund}, one deduces that
\begin{eqnarray*}
    u(\nabla_i P_{jk}-\nabla_j P_{ik})\nabla_j u&=&m R_{ijkl}\nabla_l u\nabla_j u+m\rho(\nabla_i u g_{jk}-\nabla_j u g_{ik})\nabla_j u\\
    & &-(\nabla_i u P_{jk}-\nabla_j u P_{ik})\nabla_j u,
\end{eqnarray*} where we have used that $\lambda=(m+1)\rho.$ This combined with the fact that $P_{jk}\nabla_j u=0$ and
\begin{eqnarray*}
    \nabla_i P_{jk}\nabla_j u&=&\nabla_i(P_{jk}\nabla_j u)-P_{jk}\nabla_i\nabla_j u=-\frac{u}{m}P_{jk}(P_{ij}-m\rho g_{ij})
\end{eqnarray*} allow us to infer
\begin{eqnarray*}
    R_{ijkl}\nabla_l u\nabla_j u&=&-\rho(\nabla_i u\nabla_k u-|\nabla u|^2g_{ik})-\frac{|\nabla u|^2}{m}P_{ik}\\
    & &-\frac{u^2}{m^2}P_{jk}(P_{ij}-m\rho g_{ij})-\frac{u}{m}\nabla_j P_{ik}\nabla_j u.
\end{eqnarray*} By taking $i=k=\alpha$ and multiplying the last expression by $\frac{|\nabla u|^2}{|\nabla u|^2},$ we obtain
\begin{eqnarray*}
    R_{\alpha 1\alpha 1}|\nabla u|^2&=&\rho|\nabla u|^2-\frac{|\nabla u|^2}{m}\mu_{\alpha}-\frac{u^2}{m^2}P_{j\alpha}(P_{\alpha j}-m\rho g_{\alpha j})-\frac{u}{m}\nabla_1 P_{\alpha\alpha}|\nabla u|\\
    &=&\frac{(m\rho-\mu_\alpha)|\nabla u|^2}{m}-\frac{u^2}{m^2}\mu_{\alpha}^{2}+\frac{\rho u^2}{m}\mu_\alpha-\frac{u}{m}\nabla_1 P_{\alpha\alpha}|\nabla u|.
\end{eqnarray*}
Consequently,
\begin{eqnarray*}
    \sum_{\alpha\neq\beta}^{4}R_{\alpha 1\alpha 1}\mu_{\alpha}\mu_{\beta}&=&\sum_{\alpha=2}^{4}\sum_{\beta\neq\alpha}R_{\alpha 1\alpha 1}\mu_\alpha\mu_{\beta}=\sum_{\alpha=2}^{4}R_{\alpha 1\alpha 1}\mu_\alpha(Tr(P)-\mu_\alpha)\\
    &=&\frac{1}{|\nabla u|^2}\sum_{\alpha=2}^{4}\left[\frac{(m\rho-\mu_\alpha)b(u)+\rho\mu_\alpha u^2}{m}-\frac{u^2}{m^2}\mu_{\alpha}^{2}-\frac{u}{m}\nabla_1 P_{\alpha\alpha}|\nabla u|\right]\mu_\alpha(2m\rho-\mu_\alpha)\\
    &=&\frac{1}{|\nabla u|^2}\sum_{\alpha=2}^{4}\frac{(2m^2\rho^2\mu_\alpha-3m\rho\mu_{\alpha}^{2}+\mu_{\alpha}^{3})b(u)}{m}+\frac{1}{|\nabla u|^2}\sum_{\alpha=2}^{4}\frac{(2m\rho^2\mu_{\alpha}^{2}-\rho\mu_{\alpha}^{3})u^2}{m}\\
    & &-\frac{u^2}{m^2|\nabla u|^2}\sum_{\alpha=2}^{4}(2m\rho\mu_{\alpha}^{3}-\mu_{\alpha}^{4})-\frac{u}{m|\nabla u|^2}\sum_{\alpha=2}^{4}\nabla_1 P_{\alpha\alpha}|\nabla u|\left(2m\rho\mu_{\alpha}-\mu_{\alpha}^{2}\right).
\end{eqnarray*}
In order to conclude this step, observe that
\begin{eqnarray*}
    \nabla_1 Tr(P^3)=3\sum_{\alpha=2}^{4}(\nabla_1 P_{\alpha\alpha})\mu_{\alpha}^{2}\;\;\;\;\mathrm{and}\;\;\;\;0=\nabla_1 |P|^2=2\sum_{\alpha=2}^{4}(\nabla_1 P_{\alpha\alpha})\mu_\alpha,
\end{eqnarray*} which combined with \eqref{sums} gives
\begin{eqnarray*}
    \sum_{\alpha\neq\beta}^{4}R_{\alpha1 \alpha 1}\mu_\alpha\mu_\beta&=&\frac{4m^3\rho^3-6m^3\rho^3}{m}+\frac{1}{m}\sum_{\alpha=2}^{4}\mu_{\alpha}^{3}+\frac{4m^3\rho^4 u^2}{m b(u)}-\frac{\rho u^2}{m b(u)}\sum_{\alpha=2}^{4}\mu_{\alpha}^{3}\\
    & &-\frac{u^2}{m^2 b(u)}\sum_{\alpha=2}^{4}(2m\rho\mu_{\alpha}^{3}-\mu_{\alpha}^{4})+\frac{\nabla u(Tr(P^3))u}{3m b(u)}\\
    &=&-2m^2\rho^3+\frac{4m^2\rho^4 u^2}{b(u)}+\frac{\nabla u(Tr(P^3))u}{3m b(u)}+\frac{b(u)-3\rho u^2}{m b(u)}\sum_{\alpha=2}^{4}\mu_{\alpha}^{3}+\frac{u^2}{m^2 b(u)}\sum_{\alpha=2}^{4}\mu_{\alpha}^{4}.
\end{eqnarray*}

Returning to Eq. \eqref{Rab}, we then have 
\begin{eqnarray*}
    \sum_{\alpha\neq\beta}^{4}R_{\alpha\beta\alpha\beta}\mu_\alpha\mu_\beta&=&\frac{m b(u)+2\rho u^2}{m b(u)}\left(8m^3\rho^3-2\sum_{\alpha=2}^{4}\mu_{\alpha}^{3}\right)+\frac{\rho(2b(u)-\rho u^2)}{b(u)}\cdot 2m^2\rho^2\\
    & &-\frac{2u^2}{m^2 b(u)}\left(\sum_{\alpha=2}^{4}2m\rho\mu_{\alpha}^{3}-\sum_{\alpha=2}^{4}\mu_{\alpha}^{4}\right)-\frac{u^2}{m^2 b(u)}\left(4m^4\rho^4-\sum_{\alpha=2}^{4}\mu_{\alpha}^{4}\right)\\
    & &-2m^2(m+1)\rho^3+4m^2\rho^3-\frac{8m^2\rho^4 u^2}{b(u)}-\frac{2\nabla u(Tr(P^3))u}{3mb(u)}\\
    & &-\frac{2b(u)-6\rho u^2}{mb(u)}\sum_{\alpha=2}^{4}\mu_{\alpha}^{3}-\frac{2u^2}{m^2 b(u)}\sum_{\alpha=2}^{4}\mu_{\alpha}^{4}.
\end{eqnarray*} Simplifying terms, we infer
\begin{eqnarray*}
    \sum_{\alpha\neq\beta}^{4}R_{\alpha\beta\alpha\beta}\mu_\alpha\mu_\beta&=&\frac{8m^3\rho^3b(u)+16m^2\rho^4 u^2+4m^2\rho^3 b(u)-2m^2\rho^4 u^2-2m^2(m-1)\rho^3 b(u)}{b(u)}\\
    & &-\frac{4m^2\rho^4 u^2}{b(u)}-\frac{8m^2\rho^4 u^2}{b(u)}-\frac{2\nabla u(Tr(P^3))u}{3mb(u)}\\
    & &-\frac{2m b(u)+4\rho u^2+4\rho u^2+2b(u)-6\rho u^2}{mb(u)}\sum_{\alpha=2}^{4}\mu_{\alpha}^{3}+\frac{2u^2+u^2-2u^2}{m^2 b(u)}\sum_{\alpha=2}^{4}\mu_{\alpha}^{4}\\
    &=&\frac{6m^2(m+1)\rho^3 b(u)+2m^2\rho^4 u^2}{b(u)}-\frac{2\nabla u(Tr(P^3))u}{3mb(u)}\\
    & &-\frac{2(m+1)b(u)+2\rho u^2}{mb(u)}Tr(P^3)+\frac{u^2}{m^2 b(u)}\sum_{\alpha=2}^{4}\mu_{\alpha}^{4}\\
    &=&\frac{2m^2\rho^3[3(m+1)b(u)+\rho u^2]}{b(u)}-\frac{2\nabla u(Tr(P^3))u}{3mb(u)}\\
    & &-\frac{2[(m+1)b(u)+\rho u^2]}{mb(u)}Tr(P^3)+\frac{u^2}{m^2 b(u)}\sum_{\alpha=2}^{4}\mu_{\alpha}^{4}.
\end{eqnarray*}

On the other hand, it follows from (\ref{eqi2}) that
\begin{eqnarray*}
    2u|\nabla P|^2+4(m+1)\rho u|P|^2+4u P_{ik}R_{jikl}P_{jl}=0
\end{eqnarray*} and hence,
\begin{eqnarray*}
    u|\nabla P|^2=-2(m+1)\rho u|P|^2+2uP_{ik}R_{ijkl}P_{jl}.
\end{eqnarray*} Plugging this fact into \eqref{idP2} yields
\begin{eqnarray}
\label{lmn120p}
    u L_{m+2}(Tr(P^3))&\geq& 8(m+1)\rho u Tr(P^3)-3m\rho u|\nabla P|^2-16m^3(m+1)\rho^4 u\nonumber\\
    &=&8(m+1)\rho u Tr(P^3)+6m(m+1)\rho^2 u|P|^2-6m\rho uP_{ik}R_{ijkl}P_{jl}\nonumber\\
    & &-16m^3(m+1)\rho^4 u\nonumber\\
    &=&8(m+1)\rho u Tr(P^3)-4m^3(m+1)\rho^4 u-\frac{12m^3\rho^4 u[3(m+1)b(u)+\rho u^2]}{b(u)}\nonumber\\
    & &+\frac{4\rho\nabla u(Tr(P^3))u^2}{b(u)}+\frac{12\rho u[(m+1)b(u)+\rho u^2]}{b(u)}Tr(P^3)-\frac{6\rho u^3}{m b(u)}\sum_{\alpha=2}^{4}\mu_{\alpha}^{4}\nonumber\\
    &=&\frac{4\rho u[5(m+1)b(u)+3\rho u^2]}{b(u)}Tr(P^3)-\frac{6\rho u^3}{m b(u)}\sum_{\alpha=2}^{4}\mu_{\alpha}^{4}\nonumber\\
    & &+\frac{4\rho\nabla u(Tr(P^3))u^2}{b(u)}-\frac{4m^3\rho^4 u[10(m+1)b(u)+3\rho u^2]}{b(u)}.
\end{eqnarray}

From \eqref{sums}, it is known that $\mu_2,$ $\mu_3,$ $\mu_4$ and $Tr(P)$ satisfy the hypothesis of Corollary A.1 in \cite{ChengZhou} and therefore,
\begin{eqnarray*}
    \sum_{\alpha=2}^{4}\mu_{\alpha}^{4}=-\frac{10m^4\rho^4}{3}+\frac{8m\rho}{3}Tr(P^3).
\end{eqnarray*} Substituting the above equality into (\ref{lmn120p}), we infer
\begin{eqnarray}\nonumber
    uL_{m+2}(Tr(P^3))&\geq&\frac{4\rho u[5(m+1)b(u)+3\rho u^2]}{b(u)}Tr(P^3)+\frac{20m^3\rho^5 u^3}{b(u)}-\frac{16\rho^2 u^3}{b(u)}Tr(P^3)\\\nonumber
    & &+\frac{4\rho\nabla u(Tr(P^3))u^2}{b(u)}-\frac{4m^3\rho^4 u[10(m+1)b(u)+3\rho u^2]}{b(u)}\\\nonumber
    &=&\frac{4\rho u[5(m+1)b(u)-\rho u^2]}{b(u)}Tr(P^3)+\frac{4\rho \nabla u(Tr(P^3))u^2}{b(u)}\\\nonumber
    & &-\frac{4m^3\rho^4 u[10(m+1)b(u)-2\rho u^2]}{b(u)}\\\label{principalK}
    &=&\frac{4\rho u[5(m+1)b(u)-\rho u^2]}{b(u)}(Tr(P^3)-2m^3\rho^3)+\frac{4\rho \nabla u(Tr(P^3))u^2}{b(u)}.
\end{eqnarray}

Finally, we recall that, by using (\ref{transnormal}) and (\ref{eqb}), the potential function of a quasi-Einstein manifold with constant scalar curvature is transnormal satisfying 
\begin{eqnarray*}
    b(u)=|\nabla u|^2=\frac{\mu}{m-1}-\frac{R+(m-n)\lambda}{m(m-1)}u^2=\rho(u_{max}^{2}-u^2).
\end{eqnarray*} Hence,
\begin{eqnarray}
\label{lkjn670}
    uL_{m+2}(b(u)(Tr(P^3)-2m^3\rho^3))&=&ub(u)L_{m+2}(Tr(P^3))+2u\langle\nabla b(u),\nabla(Tr(P^3))\rangle\nonumber\\
    & &+(Tr(P^3)-2m^3\rho^3)uL_{m+2}(b(u))\nonumber\\
    &=&ub(u)L_{m+2}(Tr(P^3))-4\rho u^2\nabla u(Tr(P^3))\nonumber\\
    &&+(Tr(P^3)-2m^3\rho^3)(-2\rho u^2\Delta u-2\rho u|\nabla u|^2\nonumber\\&& -(m+2)2u\rho|\nabla u|^2)\nonumber\\
    &=&ub(u)L_{m+2}(Tr(P^3))-4\rho u^2\nabla u(Tr(P^3))\nonumber\\
    & &-2\rho u\left(-2\rho u^2+(m+3)b(u)\right)(Tr(P^3)-2m^3\rho^3),
\end{eqnarray} where we have used that $\Delta u=-2\rho u$ and 
\begin{eqnarray*}
    L_{a}(f)=u^{-a}div(u^a\nabla f)=\Delta f+au^{-1}\langle\nabla u,\nabla f\rangle,\,\hbox{for}\,\,\, a\neq 0\,\,\,\hbox{and}\,\,\, f\in C^{\infty}(M).
\end{eqnarray*} Comparing \eqref{principalK} with (\ref{lkjn670}) gives
\begin{eqnarray*}
    uL_{m+2}\left(|\nabla u|^2(Tr(P^3)-2m^3\rho^3)\right)\geq2 (9m+7)\rho u |\nabla u|^2\left(Tr(P^3)-2m^3\rho^3\right),
\end{eqnarray*} as asserted. 
\end{proof}

\vspace{0.20cm}
We are now prepared to present the proof of Theorem \ref{theodim4}, which we restate here for convenience.

\begin{theorem}[Theorem \ref{theodim4}]
\label{theodim4proof}
 Let $(M^4,\,g,\,u,\,\lambda)$ be a nontrivial simply connected compact $4$-dimensional $m$-quasi-Einstein manifold with boundary and $m>1.$ Then $M^4$ has constant scalar curvature $R=2\frac{(m+2)}{(m+1)}\lambda$ if and only if it is isometric, up to scaling, to the product space $\mathbb{S}^{2}_{+}\times\mathbb{S}^2$ with the product metric. 
\end{theorem}

\begin{proof}
We already know that $Tr(P)=2m\rho$ and $|P|^2=2m^2\rho^2$, that is, 
\begin{equation}
\label{hjgfl}
|P|^2=\frac{1}{2}(Tr(P))^2.
\end{equation} Hence, since $\mu_1=0,$ Lemma \ref{lemsum} ensures that all eigenvalues $\mu_\alpha,$ $\alpha=1,2,3,4$, of $P$ are nonnegative. 

Define the function $$h:=|\nabla u|^2(Tr(P^3)-2m^3\rho^3).$$ In particular, from (\ref{eqTYU8}) and the fact that $\mu_\alpha,$ $\alpha=1,2,3,4,$ are all nonnegative, we see that $h$ is nonnegative on $M.$ Since $M$ is compact with boundary $\partial M,$ by performing integration by parts, we deduce
    
    \begin{eqnarray}
    \label{divtheo}
        \int_M L_{m+2}(h) dV_{m+2}&=&\int_{M} u^{-(m+2)}div (u^{m+2}\nabla h) dV_{m+2}=\int_M div(u^{m+2}\nabla h) dV\nonumber\\&=&-\int_{\partial M}u^{m+2}\left\langle\nabla h,\frac{\nabla u}{|\nabla u|}\right\rangle dS= 0,
    \end{eqnarray} where we have used the facts that $u$ vanishes on $\partial M,$ $dV_{m+2}=u^{m+2} dV$ is the weighted measure and the second-order operator $L_{a}$ $(a\in\mathbb{R})$ is given by 
    \begin{eqnarray*}
        L_a(f)=u^{-a}div(u^a\nabla f)=\Delta f+au^{-1}\langle \nabla u,\nabla f\rangle,
    \end{eqnarray*} for any $f\in C^{\infty}(M).$ 
    
On the other hand, it follows from Lemma \ref{lemKAp} that
    \begin{eqnarray}\label{eqh}
        2(9m+7)\rho h-L_{m+2}(h)\leq 0.
    \end{eqnarray} So, upon integrating (\ref{eqh}) over $M,$ we use (\ref{divtheo}) in order to infer
    
    \begin{eqnarray*}
2(9m+7)\rho\int_{M}  h\,dV_{m+2}\leq 0.
    \end{eqnarray*}
 Of which, one obtains that $$h=|\nabla u|^2(Tr(P^3)-2m^3\rho^3)=0.$$ Since $u$ is nonconstant and $g$ is analytical, we conclude that $Tr(P^3)-2m^3\rho^3\equiv 0$ on $M.$ Together with Eq. (\ref{eqTYU8}), this implies $\mu_2\mu_3\mu_4=0,$ and thus at least one among $\mu_2,$ $\mu_3$ and $\mu_4$ vanishes. Assume $\mu_2=0.$ Then, by using (\ref{hjgfl}), we deduce $\mu_1=\mu_2=0$ and $\mu_3=\mu_4=m\rho$. 

Returning to the Ricci tensor, we find that it has exactly two distinct eigenvalues, each of multiplicity two:
\begin{eqnarray*}
    \lambda_1=\lambda_2=\frac{\lambda}{m+1}\;\mathrm{and}\;\lambda_3=\lambda_4=\lambda,
\end{eqnarray*} where $Ric(e_i)=\lambda_i,$ for $i=1,\,2,\,3,\,4.$ In particular, the Ricci tensor $Ric$ is parallel. By the first contracted second Bianchi identity ($\nabla_l R_{ijkl}=\nabla_j R_{ik}-\nabla_i R_{jk}$), it follows that the curvature tensor is harmonic. We can therefore apply \cite[Corollary 1.14]{Petersen-Chenxu} to conclude that $M^4$ is rigid. By Proposition \ref{prop-hpw}, $M^4$ is covered by the product $\mathbb{S}^{2}_{+}\times \mathbb{S}^{2}.$ Since $M^4$ is simply connected, Theorem 54.6 in \cite{Munkres} ensures that the covering map is a bijective local isometry, hence a global isometry. Thus, $M^4$ is isometric, up to scaling, to the product space $\mathbb{S}^{2}_{+}\times\mathbb{S}^2.$ This completes the proof of the theorem. 
\end{proof}

\subsection{Proof of Corollary \ref{corA}}

\begin{proof} 
The result follows from Theorem \ref{theo1}, Remark \ref{remL}, Proposition \ref{propK11}, Theorem \ref{theo3}, and Theorem \ref{theodim4}.
\end{proof}

\section{Appendix}
For the reader's convenience, we collect here some useful facts about the distance function that were employed in the proofs of the main results. Let $M$ be a complete Riemannian manifold and $N\subset int(M)$ a properly immersed submanifold of $M.$ Let $\pi: \nu N\to N$ be the normal bundle. There is an induced connection $\nabla^\nu $ on $\nu N$ and a decomposition of tangent bundle $T(\nu N)$ as
$$ T(\nu N)=\mathcal{H} \oplus \mathcal{V}, $$
where $\mathcal{V}_\xi:=\ker(d\pi)_\xi$ and $\mathcal{H}_\xi$ consists of all tangent vectors to parallel sections passing through $\xi$. If $\alpha: (-\delta, \delta)\to \nu N$ is a smooth curve representing $v\in T(\nu N)$, then
$v^\mathcal{H}=(\pi\circ\alpha)'(0)$ and $v^\mathcal{V}=(\frac{\nabla^\nu}{\partial s}\alpha)(0)=v-v^\mathcal{H}.$ Thus, $\mathcal{H}_\xi$ and $\mathcal{V}_\xi$ are isomorphic to $T_{\pi(\xi)}N $ and $\nu_{\pi(\xi)} N,$ respectively. This decomposition induces a natural Riemannian metric on $T(\nu N)$ such that $\pi$ becomes a Riemannian submersion; for more details, see \cite[p. 11]{Ball}. With this notation in hand, we state the following lemma.

\begin{lemma}[\cite{Ball}]
Let $\alpha: (-\delta, \delta)\to \nu N$ be a smooth curve representing $v\in T(\nu N)$. Define
\[J(t):=\left.\frac{\partial}{\partial s}\right|_{s=0}\exp_{\pi\circ\alpha(s)}(t\alpha(s)) .
\]
Then $J(t)$ is a Jacobi field along the geodesic $\gamma(t)=\exp(t\alpha(0)) $ and
$$J(0)=v^\mathcal{H},\,\,\,\,J(1)=(d\exp)_{\alpha(0)}(v)\,\,\,\,\,\,\,\hbox{and}\,\,\,\,\,\,\,J'(0)=v^\mathcal{V}+A_{\alpha(0)}v^\mathcal{H}.$$
Here, $A_\eta$ stands for the shape operator with respect to normal vector $\eta.$
\end{lemma}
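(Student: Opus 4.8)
The plan is to exhibit $J$ as the variation field of a smooth family of geodesics and then extract the three pieces of data from the definitions of $\mathcal{H}$, $\mathcal{V}$ and the shape operator. First I would set $\Gamma(s,t):=\exp_{\pi\circ\alpha(s)}(t\alpha(s))$, which is smooth on a neighborhood of $\{0\}\times[0,1]$ because $M$ is complete (so the normal exponential map is globally defined) and $\alpha$ is smooth. For each fixed $s$, the curve $t\mapsto\Gamma(s,t)$ is the geodesic issuing from $\pi\circ\alpha(s)$ with initial velocity $\alpha(s)$; hence $\Gamma$ is a variation of $\gamma(t)=\Gamma(0,t)$ through geodesics. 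Differentiating the geodesic equation $\frac{D}{\partial t}\partial_t\Gamma=0$ in $s$ and using torsion-freeness ($\frac{D}{\partial s}\frac{D}{\partial t}=\frac{D}{\partial t}\frac{D}{\partial s}$) together with the definition of the curvature tensor, the variation field $J(t)=\partial_s\Gamma(s,t)\big|_{s=0}$ solves the Jacobi equation along $\gamma$ and is therefore a Jacobi field, which is the first assertion.

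Next I would read off the two endpoint values directly. Since $\Gamma(s,0)=\pi\circ\alpha(s)$, we get $J(0)=\partial_s(\pi\circ\alpha)(s)\big|_{s=0}=(\pi\circ\alpha)'(0)$, which is exactly $v^{\mathcal{H}}$ by the definition of the horizontal component. For the other endpoint, $s\mapsto\Gamma(s,1)=\exp_{\pi\circ\alpha(s)}(\alpha(s))$ is the composition of the normal exponential map $\exp$ with $\alpha$, so $J(1)=(d\exp)_{\alpha(0)}(\alpha'(0))=(d\exp)_{\alpha(0)}(v)$.

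The remaining identity for $J'(0)$ is where the structure of $\nu N$ enters. Using torsion-freeness once more, $J'(0)=\frac{D}{\partial t}\big|_{t=0}\partial_s\Gamma\big|_{s=0}=\frac{D}{\partial s}\big|_{s=0}\partial_t\Gamma\big|_{t=0}$. Because $t\mapsto\Gamma(s,t)$ is the geodesic with initial velocity $\alpha(s)$, we have $\partial_t\Gamma(s,t)\big|_{t=0}=\alpha(s)$, regarded as a normal vector field along the curve $c(s):=\pi\circ\alpha(s)$ in $M$, with $c'(0)=v^{\mathcal{H}}$. Hence $J'(0)=\frac{D}{\partial s}\big|_{s=0}\alpha(s)$, the ambient covariant derivative of this normal field. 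Splitting it into tangential and normal parts relative to $N$, the normal part is the normal-connection derivative $\big(\frac{\nabla^\nu}{\partial s}\alpha\big)(0)=v^{\mathcal{V}}$ by the definition recalled in the excerpt, while the tangential part is $A_{\alpha(0)}c'(0)=A_{\alpha(0)}v^{\mathcal{H}}$ by the Weingarten relation that defines the shape operator. Adding the two pieces gives $J'(0)=v^{\mathcal{V}}+A_{\alpha(0)}v^{\mathcal{H}}$.

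The only genuinely delicate point is the bookkeeping in this last step: one must fix the sign convention for the shape operator (here $A_\eta X$ is the tangential component of $\nabla_X\eta$, which produces the stated $+$ sign) and verify that the abstract splitting $T(\nu N)=\mathcal{H}\oplus\mathcal{V}$ matches the concrete tangential/normal decomposition along $c$ under the Riemannian-submersion identification induced by $\pi$. Every other step is the standard geodesic-variation argument.
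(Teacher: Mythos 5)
Your proposal is correct, and since the paper states this lemma without proof (it is imported verbatim from Ballmann's lecture notes \cite{Ball}), your variation-through-geodesics argument is precisely the standard proof behind the citation: the symmetry lemma gives the Jacobi property together with $J(0)=(\pi\circ\alpha)'(0)=v^{\mathcal{H}}$ and $J(1)=(d\exp)_{\alpha(0)}(v)$, and the tangential/normal (Weingarten) splitting of $\frac{D}{ds}\alpha\big|_{s=0}$ yields $J'(0)=v^{\mathcal{V}}+A_{\alpha(0)}v^{\mathcal{H}}$. Your explicit caveat that $A_\eta X$ must here denote the tangential component of $\nabla_X\eta$ (producing the $+$ sign, rather than the opposite convention $\langle A_\eta X,Y\rangle=\langle \mathrm{II}(X,Y),\eta\rangle$) is exactly the bookkeeping needed to match the statement, and the identifications $v^{\mathcal{H}}\leftrightarrow(\pi\circ\alpha)'(0)$, $v^{\mathcal{V}}\leftrightarrow(\tfrac{\nabla^\nu}{\partial s}\alpha)(0)$ you invoke are the definitions the paper itself records.
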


Proceeding, let $UN$ be the unit normal bundle of $N$ equipped with volume element $d\theta dp,$ where $dp$ denotes the volume element of $N$ and $d\theta$ is the volume element of unit sphere $\mathbb{S}^{n-k-1}_p$ in $\nu_pN.$ Thereby, we may define
$\Phi: (0,a)\times UN\to M\backslash N$ by $\Phi(r, \theta)=\exp (r\theta).$ In particular, if $M$ has boundary $\partial M,$ we take $a\leq \frac{1}{2}dist(N,\,\partial M).$

Along the normal geodesic $\gamma_\theta(r)= \exp(r\theta),$ we can choose a parallel orthonormal base $\{e_1(r),\ldots, e_n(r)\}$ such that
$$A_\theta e_i(0)=\lambda_i, \textrm{ for } i=1,\cdots, k-1,\,\,\,\,\,\textrm{ and }\,\,\,\,\,e_n=\partial r=\gamma_\theta'(r).$$
Hence, $J_i(r)=(d\Phi)_{(r, \theta)}(e_i)$, $i=1,2,\cdots, n,$ must satisfy
\[\begin{split}
&J_i''(t)+R(\gamma_\theta'(t), J_i(t))\gamma_\theta'(t)=0, \textrm{ for } i=1, \ldots, k;\\
&J_i(0)=e_i(0), \textrm{ for } i=1, \cdots, k;\\
&J'_i(0)=\lambda_ie_i(0), \textrm{ for } i=1,\cdots, k;\\
&J_i(0)=0, \textrm{ for } i=k+1, \cdots, n;\\
&J_i'(0)=e_i(0), \textrm{ for } i=k+1, \cdots, n.
\end{split}
\]
Next, we consider the following notation
\[\begin{split}
&J_{ij}=\langle J_i, e_j\rangle, \textrm{ for } i=1, \cdots, k;\\
&K_{ij}=\langle R(\gamma_\theta, e_i)\gamma_\theta, e_j\rangle,\textrm{ for } i=1, \cdots, k;\\
&\mathcal{A}=\textrm{diag} (\lambda_1, \cdots, \lambda_n).
\end{split}
\]
Also consider $\mathcal{J}:=(J_{ij})_{(k-1)\times(k-1)}$ and $\mathcal{K}:=(K_{ij})_{(k-1)\times(k-1)}.$ With these notations, one obtains that
\[
\left\{
\begin{split}
&\mathcal{J}''+\mathcal{K}\mathcal{J}=0;\\
&\mathcal{J}(0)=\textrm{diag}\left(\mathcal{I}_{k\times k}, \mathcal{O}_{(n-k-1)\times(n-k-1)}\right);\\
&\mathcal{J}'(0)=\textrm{diag}\left(\mathcal{A}, \mathcal{I}_{(n-k-1)\times(n-k-1)}\right),
\end{split}
\right.
\] where $\mathcal{O}$ and $\mathcal{I}$ denote the zero matrix and the identity matrix, respectively. If $\gamma_\theta|_{[0,r]}$ does not contain focal points, then $\mathcal{J}$ is invertible on $(0,r)$. Next, let $\sigma(x)$ be the distance function from $N.$ Therefore, $\sigma(\gamma_\theta(r))=r,$ provided that $r\in (0, r_\theta).$ Moreover, by denoting $\mathcal{U}_{ij}(r):=\nabla^2 \sigma(e_i, e_j)(\gamma_\theta(r))$ and taking into account that $\nabla^2  \sigma(J_i, J_j)=\langle J_i',J_j\rangle,$ we get the following lemma.

\begin{lemma}[\cite{Ball}]
\label{lemmaD} Let $N$ be a proper submanifold in $M$. Then for any $\theta\in\nu N$, along the normal geodesic $\gamma_\theta(r)= \exp(r\theta)$, the Hessian of the distance function $\sigma(x)=dist (x, N)$ satisfies
\[
\left\{
\begin{aligned}
&\mathcal{U}'+\mathcal{U}^2+\mathcal{K}=0,\\
&\mathcal{U}=\begin{pmatrix}
\mathcal{A}_\theta& \\
& \frac1{r}\mathcal{I}
\end{pmatrix}+r\begin{pmatrix}
-\mathcal{A}_\theta^2-\mathcal{K}_{11}(0)& \mathcal{V}_{12}\\
\mathcal{V}_{21}& \mathcal{V}_{22}
\end{pmatrix}+O(r^2),
\end{aligned}
\right.
\]
where $\mathcal{U}=\nabla^2 \sigma |_{\{\gamma'_\theta(r)\}^\perp}$, $\mathcal{K}=\mathcal{K}_\theta=R(\gamma'_\theta, \ldots)\gamma'_\theta$ and $\mathcal{A}_\theta$ is the shape operator of $N$ with respect to $\theta$. In particular, the mean curvature $H(\theta,r)$ of the level sets of $\sigma$ at $\gamma_\theta(r)$ satisfies
\begin{equation}
\label{eqHtheta}
H(\theta,r)=\textrm{tr}(\mathcal{A}_\theta)+\frac{n-k-1}r+O(r)
\end{equation}
and
\begin{equation}
\nabla^2 \frac{\sigma^2}2(\gamma_\theta(r))=\begin{pmatrix}
r\mathcal{A}_\theta& \\
& \mathcal{I}_{(n-k)\times(n-k)}
\end{pmatrix}+O(r^2).
\end{equation}
Moreover, at $N$, the function $\sigma^2$ has two eigenvalues $0$ and $2$ of multiplicities $m$ and $n-k,$ respectively.
\end{lemma}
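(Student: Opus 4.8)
The plan is to read all four assertions off a single matrix Riccati equation governing the Weingarten operator of the level sets of $\sigma$, combined with the Jacobi-field initial data recorded just above the statement. First I would establish the Riccati identity $\mathcal{U}'+\mathcal{U}^2+\mathcal{K}=0$. Since $|\nabla\sigma|\equiv1$, the operator $\mathcal{U}=\nabla^2\sigma|_{\{\gamma_\theta'\}^\perp}$ is exactly the shape operator of the hypersurface $\sigma^{-1}(r)$, and every normal Jacobi field $J$ along $\gamma_\theta$ obeys $J'=\mathcal{U}J$. Differentiating this relation and inserting the Jacobi equation $J''=-\mathcal{K}J$ (valid for the whole normal frame) gives $\mathcal{U}'J+\mathcal{U}^2J+\mathcal{K}J=0$ for a basis of such $J$, whence the matrix equation.

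Next I would extract the leading asymptotics from $\mathcal{J}$. Using $\mathcal{U}=\mathcal{J}'\mathcal{J}^{-1}$ (valid for small $r>0$, where $\mathcal{J}$ is nonsingular) together with the data $\mathcal{J}(0)=\mathrm{diag}(I_k,\mathbf 0)$ and $\mathcal{J}'(0)=\mathrm{diag}(\mathcal{A}_\theta,I)$ from the preceding lemma, I would Taylor-expand $\mathcal{J}(r)=\mathcal{J}(0)+r\mathcal{J}'(0)+\tfrac{r^2}{2}\mathcal{J}''(0)+O(r^3)$ with $\mathcal{J}''(0)=-\mathcal{K}(0)\mathcal{J}(0)$ and invert block-by-block. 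The tangent block $I_k+r\mathcal{A}_\theta$ inverts to $I_k-r\mathcal{A}_\theta+O(r^2)$, while the normal block $rI+O(r^2)$ produces the $\tfrac1r I$ singularity; multiplying by $\mathcal{J}'$ shows that the leading part of $\mathcal{U}$ is $\mathrm{diag}(\mathcal{A}_\theta,\tfrac1r I)$ and, crucially, that the off-diagonal blocks are only $O(r)$. With this leading form in hand I would feed it back into the Riccati equation: the order-$r$ coefficient of the tangent block of $-\mathcal{U}^2-\mathcal{K}$ equals $-\mathcal{A}_\theta^2-\mathcal{K}_{11}(0)$ (the cross term $\mathcal{U}_{12}\mathcal{U}_{21}=O(r^2)$ does not contribute), yielding precisely
\[
\mathcal{U}=\begin{pmatrix}\mathcal{A}_\theta & \\ & \tfrac1r I\end{pmatrix}+r\begin{pmatrix}-\mathcal{A}_\theta^2-\mathcal{K}_{11}(0) & \mathcal{V}_{12}\\ \mathcal{V}_{21}& \mathcal{V}_{22}\end{pmatrix}+O(r^2).
\]

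The last two formulas then follow by tracing and rescaling. Taking $\mathrm{tr}\,\mathcal{U}$ gives $H(\theta,r)=\mathrm{tr}(\mathcal{A}_\theta)+\tfrac{n-k-1}{r}+O(r)$, the coefficient $n-k-1$ being the size of the singular normal block (the radial direction is not in $\{\gamma_\theta'\}^\perp$). For the Hessian of $\tfrac{\sigma^2}{2}$ I would use $\nabla^2\tfrac{\sigma^2}{2}=\sigma\nabla^2\sigma+d\sigma\otimes d\sigma$; on $\{\gamma_\theta'\}^\perp$ this equals $r\,\mathcal{U}=\mathrm{diag}(r\mathcal{A}_\theta,I)+O(r^2)$, and $d\sigma\otimes d\sigma$ supplies the eigenvalue $1$ in the radial direction, giving the displayed block form with an $(n-k)\times(n-k)$ identity block. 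Letting $r\to0$ collapses the tangent block to $0$ (multiplicity $k$) and leaves $I$ on the remaining $n-k$ directions, so $\nabla^2\sigma^2$ has eigenvalues $0$ and $2$ with the stated multiplicities.

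I expect the main obstacle to be the block inversion of $\mathcal{J}$ at the focal value $r=0$: because $\mathcal{J}(0)$ is degenerate on the normal block, one must push the Taylor expansion to second order and track carefully which contributions to $\mathcal{J}'\mathcal{J}^{-1}$ survive, so as to confirm that the $\tfrac1r$ singularity sits only in the normal block with the correct multiplicity and that the off-diagonal pieces are genuinely $O(r)$. Once this order-by-order bookkeeping is secured, the Riccati equation upgrades the leading term to the full first-order expansion and the remaining statements are immediate.
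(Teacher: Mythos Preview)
Your proposal is correct and follows exactly the approach the paper sets up in the paragraphs preceding the lemma: the relation $\nabla^2\sigma(J_i,J_j)=\langle J_i',J_j\rangle$ recorded there is precisely what gives $\mathcal{U}=\mathcal{J}'\mathcal{J}^{-1}$, and the block initial data for $\mathcal{J}$ are those displayed just above. The paper itself does not supply a proof but cites Ballmann's notes, and your argument is the standard one found there; the only point worth noting is that the stated multiplicity ``$m$'' in the final sentence of the lemma is a typo for $k$, as your computation confirms.
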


In the sequel, we are going to present the proof of the following algebraic ine\-qua\-lity. 

\begin{lemma}
\label{lemsum}
Let $a_{1}\geq \ldots \geq a_{n}$ be $n\geq 2$ real numbers. Then

$$a_{i}a_{j}\geq \frac{b}{2(n-1)},$$ where $b=\left(\sum_{i=1}^n a_{i}\right)^2 - (n-1)\sum_{i=1}^{n}a_{i}^2.$ In particular, if $b\geq 0,$ then either all $a_{i}\geq 0$ or all $a_{i}\leq 0.$
\end{lemma}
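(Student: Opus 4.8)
The plan is to prove the inequality for an arbitrary pair of distinct indices by reducing it to the nonnegativity of a single-variable quadratic whose discriminant is controlled by the Cauchy--Schwarz inequality; the ordering hypothesis $a_1\geq\cdots\geq a_n$ enters only in the final sign statement.

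First I would introduce the abbreviations $S=\sum_{k=1}^n a_k$ and $Q=\sum_{k=1}^n a_k^2$, so that $b=S^2-(n-1)Q$, and fix two distinct indices $p\neq q$. Writing $T=\sum_{k\neq p,q}a_k$ and $Q'=\sum_{k\neq p,q}a_k^2$ for the contributions of the remaining $n-2$ entries, and setting $u=a_p+a_q$, I would rewrite the target inequality $2(n-1)a_pa_q\geq b$ as $(n-1)u^2+(n-1)Q'\geq S^2$, using $S=u+T$, $Q=a_p^2+a_q^2+Q'$ and the identity $(n-1)(a_p^2+a_q^2)+2(n-1)a_pa_q=(n-1)u^2$. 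Expanding $S^2=u^2+2uT+T^2$, this is equivalent to
\[
(n-2)u^2-2Tu+\big[(n-1)Q'-T^2\big]\geq 0.
\]

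The key step is to view the left-hand side as a quadratic polynomial in the single variable $u$. For $n\geq 3$ its leading coefficient $n-2$ is positive, and a direct computation shows its discriminant equals $4(n-1)\big(T^2-(n-2)Q'\big)$. Since $T$ is a sum of $n-2$ real numbers, the Cauchy--Schwarz inequality gives $T^2\leq(n-2)Q'$, so the discriminant is nonpositive and the quadratic is everywhere nonnegative; this yields $a_pa_q\geq b/(2(n-1))$ for every pair of distinct indices. The case $n=2$ is immediate, since then $b=2a_1a_2$ and the inequality holds with equality. Note that the argument is insensitive to the ordering of the $a_k$, so the bound in fact holds for all pairs, not merely the extremal one.

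Finally, for the particular claim I would assume $b\geq 0$, so that every product of distinct entries satisfies $a_ia_j\geq b/(2(n-1))\geq 0$. Applying this to the extreme pair gives $a_1a_n\geq 0$, and since $a_1\geq a_k\geq a_n$ for all $k$, this forces either all $a_k\geq 0$ (when $a_1\geq 0$) or all $a_k\leq 0$ (when $a_1<0$). I do not expect a genuine obstacle: the only delicate points are the algebraic rearrangement into the quadratic in $u$ and the bookkeeping of the discriminant, after which Cauchy--Schwarz closes the argument at once.
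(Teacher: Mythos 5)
Your proof is correct, and it takes a genuinely different route from the paper's. You fix an arbitrary pair $p\neq q$, reduce the target $2(n-1)a_pa_q\geq b$ to the nonnegativity of the quadratic $(n-2)u^2-2Tu+\bigl((n-1)Q'-T^2\bigr)$ in $u=a_p+a_q$, and kill the discriminant $4(n-1)\bigl(T^2-(n-2)Q'\bigr)$ by Cauchy--Schwarz on the remaining $n-2$ entries; I checked the algebraic rewriting, the discriminant computation, and the $n=2$ base case $b=2a_1a_2$, and all are sound. The paper instead argues by descent: completing the square in the single variable $a_n$, it derives $\left(\sum_{i=1}^{n-1}a_i\right)^2\geq (n-2)\sum_{i=1}^{n-1}a_i^2+\frac{n-2}{n-1}\,b$, i.e.\ the analogous $b$-invariant for the first $n-1$ numbers is at least $\frac{n-2}{n-1}b$, and then iterates this peeling-off step $n-2$ times, the telescoping product $\frac{n-2}{n-1}\cdot\frac{n-3}{n-2}\cdots\frac{1}{2}=\frac{1}{n-1}$ producing the constant $\frac{b}{2(n-1)}$ in the two-variable endgame. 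Your one-shot argument buys brevity, avoids the iteration and its bookkeeping, and makes explicit that the bound holds for \emph{every} pair independently of the ordering, which the paper's statement tacitly uses. One tiny slip at the end: the parenthetical ``all $a_k\geq 0$ when $a_1\geq 0$'' fails in the degenerate case $a_1=0>a_n$ (e.g.\ $a_1=0$, $a_2=-1$ gives $b=0$), though the lemma's disjunction still holds there since all entries are then nonpositive; the clean split is: if $a_n\geq 0$ all entries are nonnegative, while if $a_n<0$ then $a_1a_n\geq b/(2(n-1))\geq 0$ forces $a_1\leq 0$ and hence all entries are nonpositive.
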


\begin{proof}
The case $n=2$ is straightforward. Now, for $n>2,$ notice that
$$\left(\sum_{i=1}^{n}a_{i}\right)^2 =\left(\sum_{i=1}^{n-1}a_{i}\right)^2+2a_{n}\sum_{i=1}^{n-1}a_{i}+a_{n}^2.$$ Hence, we see that 

$$(n-1)\sum_{i=1}^n a_{i}^2 +b=\left(\sum_{i=1}^{n-1}a_{i}\right)^2+2a_{n}\sum_{i=1}^{n-1}a_{i}+a_{n}^2,$$ so that  
$$(n-1)\sum_{i=1}^{n-1} a_{i}^2 + (n-2)a_{n}^2 +b=\left(\sum_{i=1}^{n-1}a_{i}\right)^2+2a_{n}\sum_{i=1}^{n-1}a_{i}.$$ In view of this, one obtains that
$$(n-2)\sum_{i=1}^{n-1}a_{i}^2 + (n-2)a_{n}^2 -2a_{n}\sum_{i=1}^{n-1}a_{i}+b=\left(\sum_{i=1}^{n-1}a_{i}\right)^2-\sum_{i=1}^{n-1}a_{i}^2,$$ which implies that

$$2\sum_{i<j\leq n-1}a_{i}a_{j}= (n-2)\sum_{i=1}^{n-1}a_{i}^2 +(n-2)a_{n}^2 -2a_{n}\sum_{i=1}^{n-1}a_{i}+b.$$ Rearranging terms, one sees that 

$$(n-2)a_{n}^2 -2\left(\sum_{i=1}^{n-1}a_{i}\right)a_{n}+\left[(n-2)\sum_{i=1}^{n-1}a_{i}^2 +b -2\sum_{i<j\leq n-1}a_{i}a_{j}\right]= 0.$$ Of which, we have 

\begin{eqnarray*}
\left(\sum_{i=1}^{n-1}a_{i}\right)^2
&= & (n-2)\left[(n-2)\sum_{i=1}^{n-1}a_{i}^2 +b -2\sum_{i<j\leq n-1}a_{i}a_{j}\right]+ (n-2)^2\left(a_{n}-\frac{1}{n-2}\sum_{i=1}^{n-1}a_{i}\right)^2\nonumber\\
&=& (n-2)\left[(n-1)\sum_{i=1}^{n-1}a_{i}^2 -\left(\sum_{i=1}^{n-1}a_{i}\right)^2 +b\right]+ (n-2)^2\left(a_{n}-\frac{1}{n-2}\sum_{i=1}^{n-1}a_{i}\right)^2.
\end{eqnarray*} Consequently,
\begin{equation}
\label{kjnzq01}
\left(\sum_{i=1}^{n-1}a_{i}\right)^2 \geq (n-2)\sum_{i=1}^{n-1}a_{i}^2 +\frac{n-2}{n-1}b.
\end{equation} Moreover, if equality holds in (\ref{kjnzq01}), then $a_{n}=\frac{1}{n-2}\sum_{i=1}^{n-1}a_{i}.$ Now, it suffices to repeat an analogous process $n-2$ times in order to obtain the asserted inequality. 
\end{proof}

	\noindent{\bf Conflict of Interest:} There is no conflict of interest to disclose.
	
	\

\noindent{\bf Data Availability:} Not applicable.

\end{document}